\documentclass[12pt]{elsarticle}
\usepackage{graphicx}
\usepackage{subfigure}
\usepackage{booktabs}   
\usepackage{hhline}     
\usepackage{epstopdf} 

\usepackage{booktabs}
\usepackage{color}
\usepackage{latexsym}
\usepackage{setspace}
\usepackage{amsmath}
\usepackage{amssymb}
\usepackage{latexsym}
\usepackage{amsmath}
\usepackage{amssymb}
\usepackage[numbers]{natbib}
\usepackage{hyperref}
\hypersetup{
	colorlinks=true,
	linkcolor=blue,
	filecolor=blue,
	urlcolor=blue,
}
\usepackage{subfigure}
\usepackage{graphics}
\usepackage{graphicx}
\usepackage{multirow}
\makeatletter
\def\ps@pprintTitle{%
	\let\@oddhead\@empty
	\let\@evenhead\@empty
	\let\@oddfoot\@empty
	\let\@evenfoot\@oddfoot
}
\makeatother
\makeatletter \oddsidemargin  -.5cm \evensidemargin -.5cm \textwidth
17.5cm \topmargin -.65in \textheight 24cm

\newtheorem{defn}{Definition}[section]
\newcommand{\bd}{\begin{definition}}
	\newcommand{\ed}{\end{definition}}
\newcommand{\br}{\begin{remark}}
	\newcommand{\er}{\end{remark}}
\newcommand{\bea}{\begin{eqnarray}}
	\newcommand{\eea}{\end{eqnarray}}
\newcommand{\beann}{\begin{eqnarray*}}
	\newcommand{\eeann}{\end{eqnarray*}}
\newtheorem{theorem}{Theorem}[section]

\newtheorem{lemma}[theorem]{Lemma}

\newtheorem{corollary}[theorem]{Corollary}
\newtheorem{remark}{Remark}[section]
\newtheorem{example}{Example}[section]

\numberwithin{equation}{section} \numberwithin{equation}{section}
\title{Improved estimation of positive powers of scale parameters of exponential distributions under a prior information}
\author{Somnath Mondal\footnote
	{\baselineskip=10pt
		~somnathmondal@iitbhilai.ac.in; somnathmondal1498@gmail.com}
	\\
	Department of Mathematics\\
	Indian Institute of Technology Bhilai, Bhilai, India-491002}
\begin{document}
	\begin{frontmatter}
		\date{}
		\begin{abstract}
			Estimating unknown parameters subject to prior constraints is important in statistical inference, particularly in fields such as reliability analysis, survival studies, and engineering, where prior structural information about the parameters is often available. Incorporating such prior information makes the analysis more realistic and usually yields better estimates than methods that ignore such information. In this article, we consider the problem of estimating the positive power of the scale parameter of a two-shifted exponential population under a prior ordering constraint on scale parameters. We derive sufficient conditions under which equivariant estimators are shown to dominate others under scale-invariant strictly convex loss functions. In addition, we derived various estimators that dominate the best affine equivariant estimators (BAEE). Moreover, we derive a smooth estimator which dominates the BAEE using an integrated approach, and we further show that it is a generalized Bayes estimator under a non-informative prior. We also provide an improved estimator based on the Pitman closeness criterion. An extensive simulation study has been done for computational purposes. Finally, we provided real examples to implement the results.\\			
			
			\noindent{\it\bf Keywords:} Decision theory; Improved estimators; Scale invariant loss function; Generalized Bayes, Pitman closeness; Relative risk improvement.	\\\\
			{\it\bf Mathematics Subject Classification (2000):} 62F10 . 62C20
		\end{abstract}

	\end{frontmatter}
		\section{Introduction} 
	The problem of estimating parameter under some structural constraints has received a significant attention in statistical inference due to its wide applicability in reliability theory, life testing experiments and biomedical studies. In many practical situations parameters are not independent but have certain order relation. For example, in an industrial process machines operating under better maintenance conditions tends to have longer operating lifetimes than the poorly maintained one this resulting in ordered scale parameters. Among various lifetime models, the exponential distribution plays a fundamental role due to its practical usefulness in modelling failure time data. In this context, the estimation of scale parameters becomes significant specially when the scale parameters in some order relation and the location parameter are unknown with some order restriction. Incorporating the prior knowledge such as an ordering constraint on scale parameters are often yields estimators that outperform then the traditional estimators. Estimation of ordered parameters has extensively studied by various authors. Most of the existing work on estimating ordered parameters when prior knowledge suggest that they follow certain order restrictions has primarily concentrated on the development of maximum likelihood estimators. Some early work in this direction was carried out by \cite{barlow1972statistical} and \cite{robertson1988order}. Later contributions to this direction can be found in \cite{blumenthal1968estimation}, \cite{gupta1992pitman}, \cite{misra1995some}, \cite{vijayasree1995componentwise}, \cite{misra1997estimation}.
	Due to the wide applicability of exponential models in many real-life situations, considerable attention has been given in recent years to the estimation of order-restricted parameters in exponential distributions. \cite{vijayasree1995componentwise} consider the problem of component wise estimation of ordered parameters of several exponential distribution. Using the \cite{brewster1974improving} technique, sufficient conditions are derived to identify when the usual estimators of the location and scale parameters are inadmissible under the mean squared error criterion. Based on these results, improved estimators are proposed. \cite{misra2002smooth} studied the estimation of ordered scale parameters for two gamma distributions under the assumption that the ordering is known a priori. They considered component-wise estimation under a scale-equivariant squared error loss function and proposed smooth estimators that improve upon the best scale equivariant estimators. These estimators were further shown to be generalized Bayes under a non-informative prior. In \cite{jana2015estimation}, the authors studied the estimation of ordered scale parameters for two exponential distributions sharing a common location parameter and showed that the restricted maximum likelihood estimators outperform the usual maximum likelihood estimators in terms of risk. These results were later extended by \cite{jena2017estimating} by considering the problem under Type-II censoring schemes. Under the same model assumption, \cite{kayal2024estimating} recently investigated the estimation of ordered scale parameters for several exponential populations while \cite{jena2025inference} considered the problem of estimation for powers of the scale parameters of two exponential populations under the restriction of a common location parameter. For further works on the estimation of ordered scale parameters under various settings, one may refer to \cite{jana2015estimation}, \cite{patra2021componentwise}, \cite{petropoulos2017estimation}, \cite{kayal2024estimating},  \cite{mondal2024improved}. 
	
	In this article, we study the component wise estimation of positive power of ordered scale parameters of two exponential distribution when location parameters are unknown and unequal under a bowl shaped loss function $L\left(\frac{\delta}{\nu}\right)$, where $\delta$ is an estimator of $\nu$. We consider a loss function $L(t)$ which satisfies the following properties:
	\begin{itemize}
		\item [(A1)] $L(t)$ is convex with $L(1)=0$,
		\item [(A2)] The integral involving $L(t)$ are finite and can be differentiable under the integral sign.
	\end{itemize}
	Let we consider $(X_{i1}, X_{i2}, \dots, X_{ip_i})$ be random samples taken from the $i$th populations $\pi_i$, $i=1,2$. The probability density function of the $i$th population $\pi_i$ is given by
	\begin{align}
		g_i(x;\mu_i,\sigma_i)=\begin{cases}
			\frac{1}{\sigma_i}\mbox{exp}\left(-\frac{x-\mu_i}{\sigma_i}\right),~\ \mbox{if} ~x>\mu_i,\\
			0,~~~~~~~~~~~~~~~~~~~~\mbox{otherwise}
		\end{cases}
	\end{align}
	where $-\infty<\mu_i<\infty$ and $\sigma_i>0$. We assume the parameters $\sigma_i$'s and $\mu_i$'s, $i=1,2$ are unknown and $\sigma_1\le \sigma_2$. We now reduce the the sample without loss any important information about the parameter. To do that we have the complete sufficient statistics are $X_1,X_2, S_1,S_2$. We denote $X=(X_1,X_2)$ and $S=(S_1,S_2)$ where for $i=1,2$ we have
	\begin{align*}
		X_i=\min_{1\leq j \leq p_i} X_{ij} \sim E\left(\mu_i,\sigma_i/p_i\right), ~~~ S_i=\sum_{j=1}^{p_i}\left(X_{ij}-X_i\right)\sim Gamma\left(p_i-1,\sigma_i\right).
	\end{align*} Note that $(X_1,X_2,S_1,S_2)$ are independently distributed. Our goal is to estimate $\sigma_i^k$ for $k\in \mathbb{R}$ under a general scale invariant loss function $L(t)$ which satisfies the condition (A1) and (A2).

	Consider the affine group of transformations $\mathcal{G}=\left\{ g_{a_i,b_i}(x)=a_ix_{ij}+b_i\right\}$. Under this group of transformation we get the form of affine equivariant estimators which can be obtained as
	\begin{align}\label{aee}
		\mathcal{D}_0=\left\{\delta_d;\ \ \delta_{1d}(X_i,S)=dS_i^k \right\}
	\end{align}
	where $d$ is a real constant. The following theorem gives the best estimator from the class $\mathcal{D}_0$, we say it BAEE.
	
	\begin{lemma}
		Under a scale invariant loss function given by $L(t)$ , the best affine equivariant estimator (BAEE) of $\sigma_i$ is $\delta_{0i}(X_i,S)=d_{0i}S_i^k$, where $d_{0i}$ is the unique solution of the equation
		\begin{equation}
			E\left( L^\prime(d_{0i}V_i^k)V_i^k\right)=0
		\end{equation} where $V_i=\frac{S_i}{\sigma_i}\sim Gamma \left(p_i-1,1\right)$
	\end{lemma}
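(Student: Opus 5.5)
Since every estimator in $\mathcal{D}_0$ has the form $\delta_d = dS_i^k$ and the loss depends only on the ratio $\delta/\sigma_i^k$, the plan is to show that the risk does not depend on the parameters. Then minimizing risk over the class reduces to a one-dimensional convex minimization in $d$.

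First, I write $S_i = \sigma_i V_i$ with $V_i \sim Gamma(p_i-1,1)$, whose distribution is free of $(\mu_1,\mu_2,\sigma_1,\sigma_2)$. For the loss $L(\delta/\sigma_i^k)$ this gives
\begin{equation*}
R(\delta_d,\theta)=E\left[L\left(\frac{dS_i^k}{\sigma_i^k}\right)\right]=E\left[L\left(dV_i^k\right)\right]=:h(d),
\end{equation*}
so the risk is a function of $d$ alone. The BAEE is therefore $d_{0i}S_i^k$, where $d_{0i}$ minimizes $h$ over $d$; only $d>0$ is relevant, since the estimand is positive.

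Second, I establish convexity. For each fixed $v>0$ the map $d\mapsto L(dv^k)$ is convex, being the composition of a convex function with a linear map, and taking expectations preserves convexity. So $h$ is convex. I expect to need strict convexity of $L$ for uniqueness; the abstract stipulates strictly convex losses, and I will use that. Because $V_i$ has a nondegenerate continuous law, strict convexity of $L$ passes to $h$, so any minimizer is unique. By (A2) I differentiate under the integral to get $h'(d)=E\left[L'(dV_i^k)V_i^k\right]$, and a minimizer of a differentiable convex function is exactly a zero of $h'$. This yields the stated equation $E\left(L'(d_{0i}V_i^k)V_i^k\right)=0$.

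The main obstacle is existence, i.e.\ showing $h'$ actually changes sign on $(0,\infty)$. I will use that a convex $L$ with $L(1)=0$ which is bowl-shaped has $L'<0$ on $(0,1)$ and $L'>0$ on $(1,\infty)$; I take this as the intended reading of (A1). As $d\to 0^+$, $dV_i^k\to 0$ pointwise, so for small $d$ the integrand $L'(dV_i^k)V_i^k$ is negative on a set of large probability, and $h'(d)<0$ follows by monotone convergence. As $d\to\infty$ the integrand becomes positive outside a set of vanishing probability, so $h'(d)>0$. Since $h'$ is monotone and continuous, by convexity together with (A2), it has a zero, and by the strict convexity above this zero $d_{0i}$ is unique. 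This completes the argument, with the integrability assumptions (A2) carrying the limit interchanges.
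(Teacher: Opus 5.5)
Your argument is correct. The paper states this lemma without proof and implicitly relies on the same reduction you use: the risk $h(d)=E[L(dV_i^k)]$ is free of the parameters and is minimized at the unique zero of $h'(d)=E[L'(dV_i^k)V_i^k]$. The paper reuses this reduction for the conditional risks in Section~2.

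You were also right to state the two assumptions that (A1), as literally written, does not supply. Strict convexity is needed for uniqueness, and the paper uses it tacitly when it asserts that $L'$ is strictly increasing. The sign change of $L'$ at $1$, i.e.\ the bowl shape, is needed for existence.
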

	\begin{example}\label{1.1}
		For $i=1,2$
		\begin{enumerate}
			\item[(i)] Let the quadratic loss function $L_1(t)=(t-1)^2$. The BAEE of $\sigma_i^k$ is $\delta^1_{0i}=\frac{\Gamma(p_i+k-1)}{\Gamma(p_i+2k-1)}S_i^k$.
			
			\item[(ii)] Consider the entropy loss function $L_2(t)=t-\ln t-1$, the BAEE of $\sigma_i^k$ is $\delta^2_{0i}=\frac{\Gamma(p_i-1)}{\Gamma(p_i+k-1)}S_i^k$.
			
			\item [(iii)] For the symmetric loss function $L_3(t)=t+1/t-2$, the BAEE of $\sigma_i^k$ is $\delta^3_{0i}=\sqrt{\frac{\Gamma(p_i-k-1)}{\Gamma(p_i+k-1)}}S_i^k$
			
			\item[(iv)] For linex loss function $L_4(t)=e^{\alpha(t-1)}-\alpha(t-1) -1$, $\alpha \in \mathbb{R}-\{0\}$, the BAEE of $\sigma_i^k$ is $\delta^4_{0i}=d_{0i}S_i^k$, where $d_{0i}$ is the unique solution of the equation 
			\begin{equation*}
				\int_{0}^{\infty}v_i^{p_i+k-2}e^{\alpha d_{0i} v_i^k - v_i} dv_i = e^\alpha \int_{0}^{\infty} v_i^{p_i+k-2} e^{-v_i} dv_i
			\end{equation*}

		\end{enumerate}
	\end{example}
	\begin{remark}
		The UMVUE of $\sigma_i^k$ is $\delta_{Ui}=\frac{\Gamma(p_i-1)}{\Gamma(p_i+k-1)}S_i^k$ and it belongs to the class $\mathcal{D}_0$
	\end{remark}
	The main contributions of this article are summarized as follows. In Section \ref{se2}, we consider the problem of estimating $\sigma_1^k$ under the constraint $\sigma_1\leq \sigma_2$ and derive estimators that dominates the BAEE. Furthermore, we obtain a smooth improved estimator that dominates the BAEE. Also a generalized Bayes estimator is obtained under a non-informative prior. In Section \ref{se3}, analogous results are established for the estimation of $\sigma_2^k$. In Section \ref{se4}, we derive improved estimator of $\sigma_i^k$ under generalized Pitman closeness criterion. An extensive simulation study is conducted in Section \ref{se5} to compare the risk performance of the proposed estimators. Finally, Section \ref{se6} provides an application of the proposed methods to a real life data set.
	
	\section{Improved estimation of $\sigma_1^k$}\label{se2}
	In this section, we consider the problem of obtaining an improved estimator of $\sigma_1^k$ under the constraint $\sigma_1\leq \sigma_2$.	We consider the class of estimator of the form 
	\begin{align*}
		\mathcal{D}_1=\left\{ \delta_{\varphi_1}:\ \  \delta_{\varphi_1}\left(X,S\right)=\varphi_1(T)S_1^k\right\}
	\end{align*}
	where $T=\frac{S_2}{S_1}$ and $\varphi_1(\cdot)$ is a positive measurable function. The risk function $R(\delta_{\varphi_1},\sigma_1,\sigma_2)=E^T[E\left\{L\left(V_1^k\varphi_1(T)\right)|T\right\}]$. The conditional risk function can be written as $R_1(d;\sigma_1,\sigma_2)=E_\eta\left\{L(dV_1^k)|T=t\right\}$, where $V_1|T=t \sim Gamma(p_1+p_2-2,(1+\eta t)^{-1})$ where $\eta=\sigma_1/\sigma_2\leq1$. The function $R_1(d,\sigma_1,\sigma_2)$ minimized at $d_{\eta}(t)$, where $d_{\eta}(t)$ be the unique solution of the equation $E_\eta(L^\prime(V_1^kd_\eta(t))V_1^k|T=t)=0$. Using the Lemma of \cite{bobotas2010estimation}, we have
	\begin{align*}
		E_{\eta}\left(L^\prime\left(V_1^kd_1(t)\right)V_1^k|T=t\right)\geq 	E_1\left(L^\prime\left(V_1^kd_1(t)\right)V_1^k|T=t\right)=0=	E_{\eta}\left(L^\prime\left(V_1^kd_\eta(t)\right)V_1^k|T=t\right).
	\end{align*}
	Consequently we get $d_{\eta}(t)\leq d_1(t)$, where $d_1(t)$ is the unique solution of 
	\begin{align*}
		E_1\left(L^\prime\left(V_1^kd_1(t)\right)V_1^k|T=t\right)=0.
	\end{align*}
	Taking the transformation $y_1=v_1(1+t)$, we get $	E_{\eta}\left(L^\prime\left(Y_1^kd_1(t)(1+t)^{-k}\right)\right)=0$ where $Y_1\sim Gamma(p_1+p_2+k-2)$. Comparing with (\ref{st1}), we obtain $d_1(t)=\alpha_1(1+t)^k$. Consider $\varphi_{01}(t)=\min \left\{ \varphi_1(t), d_1(t)\right\}$, then for $P(d_1(T)<\varphi_1(T))\neq0$ we get $d_{\eta}(t)\leq d_1(t)= \varphi_{01}(t)<\varphi_1(t)$ on a set of positive probability. So, we get $R_1(\varphi_{01},\sigma_1,\sigma_2)<R_1(\varphi_1,\sigma_1,\sigma_2)$. Hence we get the results as in the following theorem.	
	\begin{theorem}\label{th1}
		Let $\alpha_1$ be the unique solution of the equation
		\begin{equation}\label{st1}
			EL^\prime\left(Y_1^k\alpha_1\right)=0
		\end{equation}
		where $Y_1\sim Gamma(p_1+p_2+k-2,1)$. Then the risk of the estimator $\delta_{\varphi_{01}}=\varphi_{01}(T)S_1^k$ is nowhere larger than that of the estimator $\delta_{\varphi_1}$ provided $P(\varphi_1(T)>d_1(T))\neq0$ is true.
	\end{theorem}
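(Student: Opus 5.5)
The plan is a Brewster--Zidek/Stein-type conditional argument: fix $T=t$, study the conditional risk as a function of the multiplier $d$, and show that truncating $\varphi_1$ at $d_1(t)$ moves the multiplier toward the conditional minimiser $d_\eta(t)$. Throughout write $\eta=\sigma_1/\sigma_2\in(0,1]$, $V_1=S_1/\sigma_1$ and $T=S_2/S_1$, so that $\delta_{\varphi_1}/\sigma_1^k=\varphi_1(T)V_1^k$ and
\[
R(\delta_{\varphi_1},\sigma_1,\sigma_2)=E^T\bigl[R_1(\varphi_1(T);\sigma_1,\sigma_2)\bigr],\qquad R_1(d;\sigma_1,\sigma_2)=E_\eta\{L(dV_1^k)\mid T=t\}.
\]

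\textbf{Step 1: the conditional law.} The joint density of $(V_1,V_2)$ is proportional to $v_1^{p_1-2}v_2^{p_2-2}e^{-v_1-v_2}$. Since $S_2=\sigma_2V_2=tS_1$, we have $V_2=\eta tV_1$. Changing variables to $(v_1,t)$ gives a density in $v_1$ proportional to
\[
v_1^{p_1+p_2-3}e^{-v_1(1+\eta t)},
\]
so $V_1\mid T=t\sim Gamma(p_1+p_2-2,(1+\eta t)^{-1})$. By (A1)--(A2), $d\mapsto R_1(d)$ is strictly convex (the dependence on $d$ enters through $dV_1^k$ with $V_1^k>0$), and its derivative is $E_\eta\{L'(dV_1^k)V_1^k\mid T=t\}$. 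Hence $R_1$ has a unique minimiser $d_\eta(t)$, and $R_1$ is strictly increasing on $[d_\eta(t),\infty)$.

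\textbf{Step 2: compute $d_1(t)$.} When $\eta=1$, substitute $y=v_1(1+t)$. The factor $v_1^k$ in the first-order condition contributes $(1+t)^{-k}y^k$, which tilts the density to $y^{p_1+p_2+k-3}e^{-y}$. The condition therefore becomes
\[
E\,L'\bigl(Y_1^k d_1(t)(1+t)^{-k}\bigr)=0,\qquad Y_1\sim Gamma(p_1+p_2+k-2,1).
\]
By uniqueness in (\ref{st1}), $d_1(t)(1+t)^{-k}=\alpha_1$, that is, $d_1(t)=\alpha_1(1+t)^k$.

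\textbf{Step 3: monotonicity in $\eta$ (main obstacle).} We need $d_\eta(t)\le d_1(t)$ for every $\eta\le1$. Let $W=dV_1^k$ and $h(w)=L'(w)w$. Then the derivative in Step 1 equals $d^{-1}E_\eta\{h(W)\mid T=t\}$.

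Under $\eta$, $V_1\mid T=t$ is stochastically decreasing in $1+\eta t$, so it is stochastically larger when $\eta\le1$. For $k>0$ the map $v\mapsto dv^k$ is increasing, so $W$ is stochastically larger as well.

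If $h$ is nondecreasing, then evaluating at $d=d_1(t)$ gives
\[
E_\eta\{L'(d_1V_1^k)V_1^k\mid t\}\ \ge\ E_1\{L'(d_1V_1^k)V_1^k\mid t\}=0 .
\]
This shows $d_1(t)$ lies to the right of the minimiser, i.e. $d_\eta(t)\le d_1(t)$.

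Monotonicity of $h$ is where care is needed. Convexity alone gives it only where $L'\ge0$. I would first try to invoke the lemma of \cite{bobotas2010estimation}, which states exactly this sign comparison for gamma scale families. Failing that, I would use a sign-change argument. The density ratio $f_\eta/f_1\propto e^{v(1-\eta)t}$ is increasing in $v$, and $L'(dv^k)$ has a single sign change from $-$ to $+$. Karlin's lemma then gives the same inequality without needing $h$ to be monotone.

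\textbf{Step 4: conclude.} On the set $\{\varphi_1(t)>d_1(t)\}$,
\[
d_\eta(t)\le d_1(t)=\varphi_{01}(t)<\varphi_1(t),
\]
so by Step 1, $R_1(\varphi_{01}(t))<R_1(\varphi_1(t))$. Elsewhere $\varphi_{01}=\varphi_1$ and the conditional risks coincide. Taking expectation over $T$, and using that this set has positive probability under every $(\sigma_1,\sigma_2)$ (the law of $T$ is mutually absolutely continuous across parameters), gives strict risk improvement for all $\sigma_1\le\sigma_2$.
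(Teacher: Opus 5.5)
Your proposal is correct and follows the paper's route. You condition on $T=t$, use $V_1\mid T=t\sim Gamma(p_1+p_2-2,(1+\eta t)^{-1})$, and get $d_\eta(t)\le d_1(t)=\alpha_1(1+t)^k$ from the comparison lemma of \cite{bobotas2010estimation} together with the substitution $y=v_1(1+t)$. Then you truncate $\varphi_1$ at $d_1$, exactly as the paper does.

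Your monotone-likelihood-ratio sign-change argument is a valid self-contained replacement for that lemma; it needs only $k>0$ and $L'$ nondecreasing. Your observation that the law of $T$ has an everywhere-positive density on $(0,\infty)$ fills in the strictness step that the paper leaves implicit.
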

	\begin{corollary}
		The risk of the estimator $\delta_{11}=\min\left\{d_{01},\alpha_1(1+T)^k\right\}S_1^k$ dominates the BAEE $\delta_{01}$ provided $\alpha_1<d_{01}$.
	\end{corollary}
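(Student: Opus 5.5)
The plan is to read the corollary as the special case of Theorem \ref{th1} in which $\varphi_1$ is the constant function $\varphi_1(t)\equiv d_{01}$. With this choice, $\delta_{\varphi_1}=d_{01}S_1^k$ is exactly the BAEE $\delta_{01}$. Recall that $d_1(t)=\alpha_1(1+t)^k$, obtained above by the substitution $y_1=v_1(1+t)$ and comparison with (\ref{st1}). Then
\[
\varphi_{01}(t)=\min\left\{d_{01},\alpha_1(1+t)^k\right\},
\]
so $\delta_{\varphi_{01}}$ coincides with $\delta_{11}$. It therefore remains only to verify the hypothesis $P(\varphi_1(T)>d_1(T))\neq 0$, that is, $P\bigl(\alpha_1(1+T)^k<d_{01}\bigr)>0$, and then to quote the strict conditional-risk inequality from the argument preceding Theorem \ref{th1}.

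For the hypothesis, I would use that $T=S_2/S_1$ has a positive density on $(0,\infty)$ for every $(\sigma_1,\sigma_2)$, since it is a scaled ratio of independent gamma variables. For $k>0$ the map $t\mapsto\alpha_1(1+t)^k$ is continuous and increasing, with limit $\alpha_1$ as $t\downarrow 0$. Hence, when $\alpha_1<d_{01}$, the set
\[
\{t>0:\alpha_1(1+t)^k<d_{01}\}=\bigl(0,(d_{01}/\alpha_1)^{1/k}-1\bigr)
\]
is a nondegenerate interval and carries positive probability under every parameter value. This is the step where the condition $\alpha_1<d_{01}$ is used; if it failed, the minimum would equal $d_{01}$ almost surely and no improvement would occur.

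Finally, I would integrate the conditional statement. For each $t$ in this set we have $d_\eta(t)\le d_1(t)=\varphi_{01}(t)<d_{01}$. The conditional risk $d\mapsto E_\eta\{L(dV_1^k)\mid T=t\}$ is strictly convex with minimum at $d_\eta(t)$ (under (A1), using strict convexity as in the abstract). It is therefore strictly increasing on $[d_\eta(t),\infty)$, so the conditional risk at $\varphi_{01}(t)$ is strictly smaller than at $d_{01}$. Off this set the two estimators coincide. Taking expectation over $T$ gives $R(\delta_{11},\sigma_1,\sigma_2)<R(\delta_{01},\sigma_1,\sigma_2)$ for all $\sigma_1\le\sigma_2$, which is domination.

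The main (mild) obstacle is the monotonicity step $d_\eta(t)\le d_1(t)$ for all $\eta\le 1$, which rests on the Lemma of \cite{bobotas2010estimation}. I would take it as established above rather than re-derive it, together with the identification $d_1(t)=\alpha_1(1+t)^k$.
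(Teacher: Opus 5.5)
Your proposal is correct and follows essentially the paper's route: the corollary is Theorem \ref{th1} with $\varphi_1\equiv d_{01}$, and you also supply a step the paper omits, namely that $\{t:\alpha_1(1+t)^k<d_{01}\}=\bigl(0,(d_{01}/\alpha_1)^{1/k}-1\bigr)$ is a nondegenerate interval of positive probability when $\alpha_1<d_{01}$. As a small aside, the same scaling argument gives $d_\eta(t)=\alpha_1(1+\eta t)^k$ explicitly, so $d_\eta(t)\le d_1(t)$ for $\eta\le 1$ and $k>0$ follows directly, without invoking the lemma of \cite{bobotas2010estimation}.
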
	
	Similar to \cite{petropoulos2017estimation}, we consider a wider class of estimator of the following form 
	\begin{align*}
		\mathcal{D}_2=\left\{ \delta_{\varphi_2}:\ \  \delta_{\varphi_2}\left(X,S\right)=\varphi_2(T,T_1)S_1^k\right\}
	\end{align*}
	where $T_1=\frac{X_1}{S_1}$ and $\varphi_2(\cdot)$ is a positive measurable function.
	\begin{theorem}\label{th}
		Let $Y_2\sim Gamma(p_1+p_2+k-1,1)$ and $\alpha_2$ be a solution of the equation
		\begin{equation}\label{phi2}
			EL^\prime\left(Y_2^k\alpha_2\right)=0
		\end{equation} then the risk of the estimator
		\begin{align*}
			\delta_{\varphi_{02}} = \begin{cases}
				\min\{ \varphi_2(T,T_1), d_{1,0}(T,T_1)\} S_1^k,& T_1>0\\
				\varphi_2(T,T_1)S_1^k,& \mbox{otherwise}
			\end{cases}
		\end{align*}
		is nowhere larger than that of the estimator $\delta_{\varphi_2}$ provided $P(\varphi_2(T,T_1)>d_{1,0}(T,T_1))\neq0$ where $d_{1,0}(T,T_1)=\alpha_2(1+T+p_1T_1)^k$.
	\end{theorem}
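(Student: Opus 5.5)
The plan is to follow the conditioning argument used for Theorem \ref{th1}, now conditioning on both $T$ and $T_1$. Since $\varphi_{02}$ and $\varphi_2$ agree when $T_1\le 0$, it is enough to show that for every fixed $t>0$ and $t_1>0$ the conditional risk $E\{L(dV_1^k)\mid T=t,T_1=t_1\}$, as a function of $d$, is minimized at some $d_{\eta,\mu_1}(t,t_1)\le d_{1,0}(t,t_1)=\alpha_2(1+t+p_1t_1)^k$. Convexity of $L$ then shows that replacing $\varphi_2$ by $\min\{\varphi_2,d_{1,0}\}$ moves $d$ toward the minimizer. This does not increase the conditional risk, and it strictly decreases it on the set $\{\varphi_2>d_{1,0}\}$, which has positive probability by assumption. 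Integrating over $(T,T_1)$ then gives the claim.

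First, I would compute the conditional law of $V_1=S_1/\sigma_1$ given $(T,T_1)$. Write $Z_1=p_1(X_1-\mu_1)/\sigma_1\sim Gamma(1,1)$, so that $X_1/\sigma_1=T_1V_1$. Also $S_2/\sigma_2=\eta TV_1$ with $\eta=\sigma_1/\sigma_2\le1$. Using independence of $X_1,S_1,S_2$ and changing variables $(x_1,s_1,s_2)\mapsto(t_1,v_1,t)$, the Jacobian contributes a factor $v_1^2$. I expect the conditional density of $V_1$ given $T=t$, $T_1=t_1>0$ to be proportional to
\begin{equation*}
v^{p_1+p_2-2}\exp\{-(1+\eta t+p_1t_1)v\}\,I\left(v>\mu_1/(\sigma_1t_1)\right),
\end{equation*}
that is, a $Gamma(p_1+p_2-1)$ law with rate $1+\eta t+p_1t_1$ truncated from below at $c=\mu_1/(\sigma_1t_1)$. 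The truncation is vacuous when $\mu_1\le0$. Carrying out this computation correctly, including the indicator, is the main bookkeeping step.

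Next, the minimizer $d_{\eta,\mu_1}$ solves $E_{\eta,\mu_1}\{L'(dV_1^k)V_1^k\mid t,t_1\}=0$. I would absorb the weight $v^k$ into the density, which gives a truncated $Gamma(p_1+p_2+k-1)$ law with rate $1+\eta t+p_1t_1$. In the baseline case $\eta=1$, $\mu_1\le0$ there is no truncation. Substituting $y=(1+t+p_1t_1)v$ turns the equation into $E L'(Y_2^k d(1+t+p_1t_1)^{-k})=0$ with $Y_2\sim Gamma(p_1+p_2+k-1,1)$. Comparing with (\ref{phi2}) gives the minimizer $d_{1,0}=\alpha_2(1+t+p_1t_1)^k$.

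The key comparison, and the step I expect to be the main obstacle, is showing that
\begin{equation*}
E_{\eta,\mu_1}\{L'(d_{1,0}V_1^k)V_1^k\mid t,t_1\}\ge E_{1,0}\{L'(d_{1,0}V_1^k)V_1^k\mid t,t_1\}=0 .
\end{equation*}
To prove it, I would show that the weighted density under $(\eta,\mu_1)$ has a nondecreasing likelihood ratio in $v$ relative to the baseline. The rate is smaller since $\eta\le1$, which gives the factor $e^{(1-\eta)tv}$. Lower truncation at $c$ gives the factor $I(v>c)$. Both factors are nondecreasing in $v$. The function $v\mapsto L'(d_{1,0}v^k)$ is nondecreasing for $k>0$ by convexity (A1), and it changes sign once. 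By the lemma of \cite{bobotas2010estimation}, or directly by a standard sign-change argument, the displayed inequality follows. Since $d\mapsto E\{L'(dV_1^k)V_1^k\}$ is increasing, we then get $d_{\eta,\mu_1}\le d_{1,0}$. Care is needed because of the truncation when $\mu_1>0$, but MLR is preserved by it. Condition (A2) justifies differentiating under the integral.
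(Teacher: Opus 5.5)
Your proposal is correct and follows essentially the same route as the paper. Both condition on $(T,T_1)$ and use the lower-truncated gamma law of $V_1$ (shape $p_1+p_2-1$, rate $1+\eta t+p_1t_1$). Both then compare with the untruncated $\eta=1$ case via the Bobotas--Kourouklis lemma to get $d_{\eta,\eta_1}(t,t_1)\le d_{1,0}(t,t_1)=\alpha_2(1+t+p_1t_1)^k$, and finally truncate $\varphi_2$ at $d_{1,0}$ on $\{T_1>0\}$. The only cosmetic difference is that you fold the two comparisons (removing the truncation, then moving $\eta$ to $1$) into a single monotone-likelihood-ratio step, whereas the paper applies the lemma twice.
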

	
	\noindent \textbf{Proof:} The risk function of the estimator $\delta_{\varphi_2}$ is
	\begin{eqnarray*}
		R\left(\delta_{\varphi_2},\mu_1,\sigma_1,\sigma_2\right)
		=E\left[E\left\{L\left(V_1^k\varphi_2(T,T_1)\right)\big\rvert T, T_1\right\} \right].
	\end{eqnarray*}
	The conditional risk can be written as  $R_1(d,\mu_1,\sigma_1,\sigma_2)=E\left\{L\left(dV_1^k\right)\big\rvert T=t,T_1=t_1\right\}$ where the conditional density of $V_1$ given $T=t$, $T_1=t_1$, is 
	\begin{align*}
		f_{\eta,\eta_1}(v_1) \propto v_1^{p_1+p_2-2} e^{-v_1(1+\eta t+p_1t_1)},~ t>0,~ v_1>\max\left\{0,\frac{\eta_1}{t_1}\right\}
	\end{align*} 
	where $\eta=\frac{\sigma_1}{\sigma_2}\leq1$, $\eta_1=\frac{\mu_1}{\sigma_1}$.
		Applying Lemma A.2. from \cite{bobotas2010estimation} repeatedly, we get  for all $d>0$
		\begin{align*}
			E_{\eta,\eta_1}\left[L'\left(dV_1^k\right)V_1^k\right] \geq E_{\eta,0}\left[L'\left(dV_1^k\right)V_1^k\right] &\geq E_{1,0}\left[L'\left(dV_1^k\right)V_1^k\right]
		\end{align*}
		Let $d_{\eta,\eta_1}(t,t_1)$ is the unique minimizer of $R_1(d,\mu_1,\sigma_1,\sigma_2)$. Now take $d=d_{1,0}(t,t_1)$ we have
		\begin{align*}
			E_{\eta,\eta_1}\left[L'\left(d_{1,0}(t,t_1)V_1^k\right)V_1^k\right]  &\geq	E_{1,0}\left[L'\left(d_{1,0}(t,t_1)V_1^k\right)V_1^k\right] = 0 = E_{\eta,\eta_1}\left[L'\left(d_{\eta,\eta_1}(t,t_1)V_1^k\right)V_1^k\right].
		\end{align*}
		Since $L^{\prime}(t)$ is strictly increasing then from the above inequality, we have $d_{\eta,\eta_1}(t,t_1)\leq d_{1,0}(t,t_1)$, where $ d_{1,0}(t,t_1)$ is the unique solution of $$E_{1,0}\left[L'\left(d_{1,0}(t,t_1)V_1^k\right)V_1^k\right]=0.$$  
		Using the transformation $y_2=v_1\left(1+t+p_1t_1\right)$ we obtain 
		\begin{equation}
			EL'\left(d_{1,0}(t,t_1)Y_2^k\left(1+t+p_1t_1\right)^{-k}\right)=0,
		\end{equation} 
		where $Y_2\sim \mbox{Gamma}(p_1+p_2+k-1)$. Comparing with equation (\ref{phi2}) we get  
		$d_{1,0}(t,t_1)=\alpha_2\left(1+t+p_1t_1\right)^k.$ Define a function  $\varphi_{02}(t,t_1)=\min\left\{\varphi_2(t,t_1),d_{1,0}(t,t_1)\right\}$. Now we have $$d_{\eta,\eta_1}(t,t_1)\le  d_{1,0}(t,t_1)=\varphi_{02} < \varphi_2(t,t_1)$$  provided $P(d_{1,0}(T,T_1)$ $<\varphi_2(T,T_1))>0$. Hence we get $R_1(\varphi_2,\mu_1,\sigma_1,\sigma_2) > R_1(\varphi_{02},\mu_1,\sigma_1,\sigma_2)$. This completes the proof of the result.
		\begin{corollary}
			The estimator 
			\begin{align*}
				\delta_{12}= \begin{cases}\min \left\{d_{01}, \alpha_2\left(1+T+p_1T_1\right)^k\right\} S_1^k, & T_1>0 \\ 
					d_{01} S_1^k, & \text { otherwise }\end{cases}
			\end{align*}
			is nowhere larger than that of the estimator $\delta_{\varphi_2}$ provided $\alpha_2< d_{01}$.
		\end{corollary}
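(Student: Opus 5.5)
The corollary is a direct specialization of Theorem \ref{th} to the BAEE. I will take $\varphi_2(T,T_1)\equiv d_{01}$, a constant function, which is admissible in $\mathcal{D}_2$ since $d_{01}>0$. With this choice $\delta_{\varphi_2}$ is exactly the BAEE $\delta_{01}=d_{01}S_1^k$ from the Lemma. The estimator $\delta_{\varphi_{02}}$ constructed in Theorem \ref{th} is then precisely $\delta_{12}$: it equals $\min\{d_{01},\alpha_2(1+T+p_1T_1)^k\}S_1^k$ on $\{T_1>0\}$ and $d_{01}S_1^k$ otherwise.

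It remains to verify the hypothesis $P(\varphi_2(T,T_1)>d_{1,0}(T,T_1))\neq 0$, that is,
\begin{align*}
P\left(\alpha_2\left(1+T+p_1T_1\right)^k<d_{01},\ T_1>0\right)>0 .
\end{align*}
Since $\alpha_2<d_{01}$ and $k>0$, the inequality $(1+T+p_1T_1)^k<d_{01}/\alpha_2$ holds whenever $T+p_1T_1<(d_{01}/\alpha_2)^{1/k}-1=:c$, where $c>0$. For the probability, note that $S_1,S_2$ are Gamma variables with positive densities on $(0,\infty)$, so $T=S_2/S_1$ has positive density on $(0,\infty)$. The variable $X_1$ is independent of $S$, and whenever $\mu_1<$ some positive value the event $\{0<X_1<\varepsilon S_1\}$ has positive probability for every $\varepsilon>0$. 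Hence the event $\{T<c/2,\ 0<p_1T_1<c/2\}$ has positive probability, and Theorem \ref{th} gives that the risk of $\delta_{12}$ is nowhere larger than that of $\delta_{01}$, with strict improvement at such parameter values.

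The main obstacle is this positivity step. It requires $P(X_1>0)>0$, which can fail when $\mu_1$ is very negative relative to $\sigma_1$: since $X_1\ge\mu_1$, we have $P(X_1>0)=\exp(-p_1\max\{0,-\mu_1\}/\sigma_1)$, and this is in fact always positive. The set $\{0<T_1<\varepsilon\}$ always has positive probability, because the conditional density of $X_1$ is positive on $(\mu_1,\infty)$, which meets $(0,\varepsilon S_1)$. So the positivity holds at every parameter point, and the argument goes through without restriction.

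I also need to confirm that $\alpha_2$ is well defined. I will argue from (A1) and (A2) that $d\mapsto E L'(dY_2^k)Y_2^k$ is strictly increasing and changes sign, which gives a unique root. This root is the only property of $\alpha_2$ that the argument relies on.
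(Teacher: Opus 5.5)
Your proposal is correct and follows the paper's own route. The paper gives no separate proof of this corollary; it is Theorem \ref{th} with $\varphi_2\equiv d_{01}$, so that $\delta_{\varphi_2}$ is the BAEE $\delta_{01}$ (the only sensible reading of the statement) and $\delta_{\varphi_{02}}=\delta_{12}$. Your check that $P\bigl(T_1>0,\ \alpha_2(1+T+p_1T_1)^k<d_{01}\bigr)>0$ at every parameter point supplies a step the paper leaves unstated.

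Two small fixes are needed. First, argue the joint positivity of $\{T<c/2,\ 0<p_1T_1<c/2\}$ from the positive joint density of $(S_1,S_2,X_1)$ on $(0,\infty)^2\times(\mu_1,\infty)$, rather than from the two marginal facts. Second, the equation defining $\alpha_2$ is $E L'(\alpha_2Y_2^k)=0$, with the weight $Y_2^k$ already absorbed into the shape $p_1+p_2+k-1$; it is not $E L'(dY_2^k)Y_2^k=0$.
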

		If we consider a class of estimator as 
		\begin{align*}
			\mathcal{D}_3=\left\{ \delta_{\varphi_3}:\ \  \delta_{\varphi_3}\left(X,S\right)=\varphi_3(T,T_2)S_1^k; \ T_2=\frac{X_2}{S_1},~\varphi_3(\cdot) \mbox{ is a positive measurable function}\right\}
		\end{align*}
		\begin{corollary}
			The estimator 
			\begin{align*}
				\delta_{13}= \begin{cases}\min \left\{d_{01}, \alpha_3\left(1+T+p_2T_2\right)^k\right\} S_2^k, & T_2>0 \\ 
					d_{01} S_1^k, & \text { otherwise }\end{cases}
			\end{align*}
			is nowhere larger than that of the estimator $\delta_{\varphi_3}$ provided $\alpha_2< d_{01}$.
		\end{corollary}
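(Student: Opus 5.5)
The plan is to repeat the conditional-risk argument of Theorem \ref{th}, with the pair $(T,T_1)$ replaced by $(T,T_2)$, where $T_2=X_2/S_1$. I read the statement with its evident typos corrected: the estimator is $\min\{d_{01},\alpha_3(1+T+p_2T_2)^k\}S_1^k$ on $\{T_2>0\}$. The constant $\alpha_3$ is the unique solution of $EL'(Y_3^k\alpha_3)=0$ with $Y_3\sim Gamma(p_1+p_2+k-1,1)$, and the condition is $\alpha_3<d_{01}$. Since $d_{01}$ is a constant, the corollary follows from the analogue of Theorem \ref{th} applied to the class $\mathcal{D}_3$ with $\varphi_3\equiv d_{01}$. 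That analogue is what I would prove.

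\textbf{Step 1: conditional density.} For an estimator $\varphi_3(T,T_2)S_1^k$ the risk is $E\big[E\{L(V_1^k\varphi_3(T,T_2))\mid T,T_2\}\big]$. I would first derive the conditional density of $V_1=S_1/\sigma_1$ given $T=t$ and $T_2=t_2$, using the independence of $X_2$, $S_1$ and $S_2$. Writing $S_2=t\sigma_1 v_1$ and $X_2=t_2\sigma_1 v_1$, the Jacobians contribute $(\sigma_1v_1)^2$. The gamma density of $S_2$ contributes $v_1^{p_2-2}e^{-\eta t v_1}$, and the exponential density of $X_2$ contributes $e^{-p_2\eta t_2 v_1}$ on the set $t_2\sigma_1v_1>\mu_2$. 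This gives
\begin{align*}
f_{\eta,\eta_2}(v_1)\propto v_1^{p_1+p_2-1}e^{-v_1(1+\eta t+p_2\eta t_2)},\qquad v_1>\max\Big\{0,\frac{\eta_2}{t_2}\Big\},
\end{align*}
with $\eta=\sigma_1/\sigma_2\le 1$ and $\eta_2=\mu_2/\sigma_1$. The power of $v_1$ should be checked carefully, because it fixes the gamma shape in the final step; my count gives $p_1+p_2-1$, consistent with the shape $p_1+p_2+k-1$ in Theorem \ref{th}.

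\textbf{Step 2: monotonicity.} Restrict to $t_2>0$. I would apply Lemma A.2 of \cite{bobotas2010estimation} twice to obtain, for every $d>0$,
\begin{align*}
E_{\eta,\eta_2}\big[L'(dV_1^k)V_1^k\big]\ge E_{\eta,0}\big[L'(dV_1^k)V_1^k\big]\ge E_{1,0}\big[L'(dV_1^k)V_1^k\big].
\end{align*}
Both inequalities rest on two facts. First, $L'(dv^k)v^k$ is increasing in $v$ for $k>0$. Second, the family is stochastically increasing when the truncation point rises and when the rate $1+\eta(t+p_2t_2)$ falls, that is, when $\eta$ decreases toward its constrained range $\eta\le1$. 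The lower truncation point can be lowered to $0$ when $\eta_2>0$; when $\eta_2\le0$ it is already $0$.

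This is the step I expect to be the main obstacle. Unlike the case of $T_1$, the truncation point $\eta_2/t_2$ and the rate are now coupled through $\sigma_1$, so I must check that the monotone-likelihood-ratio structure still applies separately in each argument. If it does not, I would first try fixing the rate and comparing only the truncation, and then fixing the truncation at $0$ and comparing rates, as in the displayed chain.

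\textbf{Step 3: closed form and conclusion.} Let $d_{\eta,\eta_2}(t,t_2)$ denote the minimizer of the conditional risk. Strict monotonicity of $L'$ then gives $d_{\eta,\eta_2}\le d_{1,0}$, where $d_{1,0}$ solves $E_{1,0}[L'(dV_1^k)V_1^k]=0$. The substitution $y=v_1(1+t+p_2t_2)$ turns this equation into $EL'\big(d\,Y_3^k(1+t+p_2t_2)^{-k}\big)=0$, so $d_{1,0}(t,t_2)=\alpha_3(1+t+p_2t_2)^k$.

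Finally, convexity of the conditional risk in $d$ shows that truncating $\varphi_3$ at $d_{1,0}$ strictly lowers the conditional risk wherever $\varphi_3>d_{1,0}\ge d_{\eta,\eta_2}$. Taking expectations yields domination. For $\varphi_3\equiv d_{01}$, the event $\{d_{01}>\alpha_3(1+T+p_2T_2)^k,\ T_2>0\}$ has positive probability exactly when $\alpha_3<d_{01}$, since $1+T+p_2T_2$ can be arbitrarily close to $1$.
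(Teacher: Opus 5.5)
Your route is the paper's own: the proof of Theorem \ref{th} with $T_1$ replaced by $T_2$. Your reading of the typos is the right one: $S_1^k$ in the first branch, $\alpha_3=\alpha_2$ with $Y_3\sim\mbox{Gamma}(p_1+p_2+k-1,1)$, rate $1+\eta(t+p_2t_2)$, and truncation point $\eta_2/t_2$ with $\eta_2=\mu_2/\sigma_1$.

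There is a slip in Step 1. $S_1\sim\mbox{Gamma}(p_1-1,\sigma_1)$ contributes $v_1^{p_1-2}$, so the exponent is $(p_1-2)+(p_2-2)+2=p_1+p_2-2$, as in the paper's density for Theorem \ref{th}. With your $p_1+p_2-1$, the substitution $y=v_1(1+t+p_2t_2)$ would give $\mbox{Gamma}(p_1+p_2+k,1)$. So your "consistency" remark is backwards: only the corrected exponent produces your (correct) $Y_3$.

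Your worry about coupling is unfounded. The pair $(\eta,\eta_2)$ ranges freely over $(0,1]\times\mathbb{R}$ because $\mu_2$ is free, exactly as $(\eta,\eta_1)$ does in Theorem \ref{th}. The laws $E_{\eta,0}$ and $E_{1,0}$ are only comparison densities.

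The genuine gap is the justification of Step 2. You assert that $h_d(v)=L'(dv^k)v^k$ is increasing in $v$, and this is false in general. For the quadratic loss $L_1$, $h_d(v)=2(dv^k-1)v^k$ decreases on $\{v^k<1/(2d)\}$. So stochastic ordering does not deliver your chain. In fact the chain, read as an inequality of expectations for every $d>0$, can fail: truncating at a small $a>0$ removes mass where $h_d\approx 0^-$ and can push a negative mean further down. The chain displayed in the paper's proof of Theorem \ref{th} should be read with the same caveat.

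What holds, and all the argument needs, is sign preservation. Since $L'$ is increasing and vanishes at $1$, $h_d\le 0$ for $v<v_0=d^{-1/k}$ and $h_d\ge 0$ for $v>v_0$. Each comparison in your chain is between densities whose ratio $r$ is nondecreasing:
\begin{itemize}
\item $r\propto\mathbf{1}\{v_1>\eta_2/t_2\}$ when the truncation is lowered to $0$;
\item $r\propto\exp\{(1-\eta)(t+p_2t_2)v_1\}$ when the rate changes, on $t_2>0$.
\end{itemize}
Then $h_d(v)\{r(v)-r(v_0)\}\ge 0$ for all $v$, which gives $E_f[h_d]\ge r(v_0)E_g[h_d]$ with $r(v_0)\ge 0$. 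At $d=d_{1,0}$ we have $E_{1,0}[h_d]=0$, so two applications yield $E_{\eta,\eta_2}[h_{d_{1,0}}]\ge 0$. Strict monotonicity of $d\mapsto E_{\eta,\eta_2}[h_d]$ then gives $d_{\eta,\eta_2}\le d_{1,0}$. With this replacement, your Step 3 (truncation via convexity, and the positive-probability argument requiring $\alpha_3<d_{01}$) goes through as written.
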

		Now using the information present in both samples, we consider a larger class of estimators as follows
		\begin{align*}
			\mathcal{D}_4= \left\{\delta_{\varphi_4}:\delta_{\varphi_4}=\varphi_4(T,T_1,T_2)S_1^K, ~\varphi_4(\cdot) \mbox{ is a positive measurable function}\right\}
		\end{align*}
		In the following theorem, we give sufficient conditions under which we will get an improved estimator. 
		\begin{theorem}\label{th2.13}
			Let $Y_4 \sim Gamma(p_1+p_2+k,1)$ and $\alpha_4$ be a solution of the equation 
			\begin{equation}
				EL^{\prime} \left(Y_4^k\alpha_4\right)=0.
			\end{equation}
			Then the risk of the estimator
			$$\delta_{\varphi_{04}}= \begin{cases}\min \left\{\varphi_4(T,T_1,T_2), d_{1,0,0}(T,T_1,T_2)\right\} S_1^k, & T_1>0,T_2>0 \\ \varphi_4(T,T_1,T_2) S_1^k, & \text { otherwise }\end{cases}$$
			is nowhere larger than that of the estimator $\delta_{\varphi_4}$ under the loss function $L(t)$ provided $P(d_{1,0,0}(T,T_1,T_2)<\varphi_4(T,T_1,T_2))\neq0$, where $d_{1,0,0}(T,T_1,T_2)=\alpha_4\left(1+T+p_1T_1+p_2T_2\right)^k$. 	
		\end{theorem}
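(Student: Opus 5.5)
We follow the scheme of the proof of Theorem \ref{th}, now conditioning on all three ancillary-type statistics $(T,T_1,T_2)$. We write the risk of $\delta_{\varphi_4}$ as
\[
R(\delta_{\varphi_4})=E\left[E\left\{L\left(V_1^k\varphi_4(T,T_1,T_2)\right)\big\rvert T,T_1,T_2\right\}\right].
\]
Since the loss depends on the estimator only through the scalar $d=\varphi_4(t,t_1,t_2)$, it suffices to compare the conditional risks $R_1(d)=E\{L(dV_1^k)\mid T=t,T_1=t_1,T_2=t_2\}$ pointwise for $t_1>0,t_2>0$. On the complement nothing changes.

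The first step is the conditional density of $V_1=S_1/\sigma_1$. We use the independence of $X_1,X_2,S_1,S_2$ and the change of variables $S_2=TS_1$, $X_1=T_1S_1$, $X_2=T_2S_1$, with Jacobian $s_1^3$. The exponent collects $-v_1\bigl(1+\eta t+p_1t_1+p_2\eta t_2\sigma_1/\sigma_2\cdot\sigma_2/\sigma_1\bigr)$. More carefully, $X_2$ contributes $-p_2(x_2-\mu_2)/\sigma_2=-p_2\eta t_2v_1+p_2\mu_2/\sigma_2$. This gives
\[
f_{\eta,\eta_1,\eta_2}(v_1)\propto v_1^{p_1+p_2}e^{-v_1(1+\eta t+p_1t_1+\eta p_2t_2)},\qquad v_1>\max\left\{0,\tfrac{\eta_1}{t_1},\tfrac{\eta_2}{\eta t_2}\right\},
\]
where $\eta=\sigma_1/\sigma_2\le1$, $\eta_1=\mu_1/\sigma_1$ and $\eta_2=\mu_2/\sigma_2$. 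The power $p_1+p_2$ is $p_1+p_2-3$ from the gamma parts plus $3$ from the Jacobian. The extra factor $v_1^k$ in $E[L'(dV_1^k)V_1^k]$ then raises it to $p_1+p_2+k$, which matches $Y_4\sim Gamma(p_1+p_2+k,1)$.

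Next, for every $d>0$ we apply Lemma A.2 of \cite{bobotas2010estimation} repeatedly to the function $v\mapsto L'(dv^k)v^k$, which is increasing because $L'$ is increasing by (A1). Removing the truncations, i.e.\ setting $\eta_1=\eta_2=0$, only moves mass to the left (the truncated family is stochastically larger), so the expectation decreases. Replacing $\eta$ by $1$ increases the rate, since $\eta\le1$ and $t,t_2>0$, and again gives a stochastically smaller law. Hence
\[
E_{\eta,\eta_1,\eta_2}\left[L'(dV_1^k)V_1^k\right]\ge E_{\eta,0,0}\left[L'(dV_1^k)V_1^k\right]\ge E_{1,0,0}\left[L'(dV_1^k)V_1^k\right].
\]
Let $d_{\eta,\eta_1,\eta_2}$ be the unique minimizer of $R_1$, characterized by the vanishing derivative via (A2) and strict convexity. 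Let $d_{1,0,0}$ be the root of the right-hand side. We evaluate the chain at $d=d_{1,0,0}$ and use monotonicity of $d\mapsto E[L'(dV_1^k)V_1^k]$ to get $d_{\eta,\eta_1,\eta_2}\le d_{1,0,0}$.

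To identify $d_{1,0,0}$, we substitute $y_4=v_1(1+t+p_1t_1+p_2t_2)$ in the untruncated gamma expectation. This turns the defining equation into $E L'\left(d_{1,0,0}Y_4^k(1+t+p_1t_1+p_2t_2)^{-k}\right)=0$, and comparing with the equation defining $\alpha_4$ gives $d_{1,0,0}=\alpha_4(1+t+p_1t_1+p_2t_2)^k$.

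Finally, where $\varphi_4>d_{1,0,0}$, replacing $\varphi_4$ by the minimum moves $d$ toward the minimizer. The conditional risk is convex in $d$ and increasing beyond $d_{\eta,\eta_1,\eta_2}$, so it strictly decreases there and is unchanged elsewhere. Taking expectations gives the domination, strict when $P(d_{1,0,0}<\varphi_4)\ne0$.

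The main obstacle is the first step: deriving the conditional density correctly, with the right gamma power and the joint truncation from both location parameters. We must also check that each of the three parameter moves really is a monotone likelihood ratio or stochastic ordering in the direction required by Lemma A.2. In particular, the truncation point $\eta_2/(\eta t_2)$ involves $\eta$ as well, so we would first remove both truncations at fixed $\eta$, and only then vary $\eta$ in the pure gamma family.
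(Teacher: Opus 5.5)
Your route is the paper's: its proof of this theorem just defers to the proof of Theorem \ref{th}. Your rate $c_\eta=1+\eta t+p_1t_1+\eta p_2t_2$, your truncation point $a=\max\{0,\eta_1/t_1,\eta_2/(\eta t_2)\}$, your identification of $d_{1,0,0}$ and your final convexity step are all correct.

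The genuine gap is in the comparison step. The function $h_d(v)=L'(dv^k)v^k$ is \emph{not} increasing in general. For $L_1$ it equals $2(dv^{2k}-v^k)$, which decreases on $(0,(2d)^{-1/k})$. So stochastic dominance does not give your chain, and the chain in fact fails for small $d$: for $L_1$, $E[h_d(V_1)]\to-2E[V_1^k]$ as $d\to0$, and $E[V_1^k]$ is strictly larger under $(\eta,\eta_1,\eta_2)$ with $\eta<1$ than under $(1,0,0)$. (The paper's proof of Theorem \ref{th} states the same chain ``for all $d>0$''; there too only the instance $d=d_{1,0,0}$ is valid and needed.)

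The correct argument combines a single sign change with a monotone likelihood ratio.
\begin{itemize}
\item Because $L$ is bowl-shaped with minimum at $1$, $h_d$ changes sign once, from negative to positive, at $v_0=d^{-1/k}$.
\item The ratio $r(v)=f_{\eta,\eta_1,\eta_2}(v)/f_{1,0,0}(v)\propto e^{(1-\eta)(t+p_2t_2)v}I(v>a)$ is nondecreasing.
\item Hence $h_d(v)\{r(v)-r(v_0)\}\ge0$ for every $v$, so $E_{\eta,\eta_1,\eta_2}[h_d(V_1)]=E_{1,0,0}[h_d(V_1)r(V_1)]\ge r(v_0)E_{1,0,0}[h_d(V_1)]$.
\item The right-hand side is $0$ at $d=d_{1,0,0}$.
\end{itemize}
This is exactly the inequality you use afterwards. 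Since it compares $(\eta,\eta_1,\eta_2)$ with $(1,0,0)$ in one step, your worry about the order of the parameter moves disappears.

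There is also a slip in the density, which you yourself flagged as the main obstacle. The gamma kernels give $s_1^{(p_1-2)+(p_2-2)}$ and the Jacobian gives $s_1^3$. So the conditional density is proportional to $v_1^{p_1+p_2-1}e^{-c_\eta v_1}$ on $v_1>a$, not to $v_1^{p_1+p_2}$. The exponent $p_1+p_2-3$ of the $(V_1,T)$ density already contains the Jacobian of $S_2=TS_1$. Multiplying by $v_1^k$ gives $v_1^{p_1+p_2+k-1}$, the kernel of $Gamma(p_1+p_2+k,1)$. Your remark that the exponent $p_1+p_2+k$ ``matches'' $Gamma(p_1+p_2+k,1)$ confuses exponent with shape, so two off-by-one errors cancel. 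Taken literally, your density would give $Gamma(p_1+p_2+k+1,1)$.

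Finally, the modification acts only on $\{T_1>0,T_2>0\}$. Strict improvement therefore requires $P(T_1>0,\,T_2>0,\,\varphi_4(T,T_1,T_2)>d_{1,0,0}(T,T_1,T_2))>0$.
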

		\noindent\textbf{Proof.} The proof can be carried out using the same arguments as in Theorem \ref{th}.
		\begin{corollary}
			The estimator 
			\begin{align*}
				\delta_{14}= \begin{cases}\min \left\{d_{01}, \alpha_4\left(1+T+p_1T_1+p_2T_2\right)^k\right\} S_1^k, & T_1>0,\ T_2>0 \\ 
					d_{01} S_1^k, & \text { otherwise }\end{cases}
			\end{align*}
			dominates the BAEE under a general scale invariant loss function $L(t)$ provided $\alpha_3<c_{01}$.
		\end{corollary}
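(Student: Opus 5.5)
The corollary follows from Theorem \ref{th2.13} by taking the BAEE itself as the starting estimator. I set $\varphi_4(T,T_1,T_2)\equiv d_{01}$, so that $\delta_{\varphi_4}=\delta_{01}=d_{01}S_1^k\in\mathcal{D}_4$. With this choice the estimator $\delta_{\varphi_{04}}$ of Theorem \ref{th2.13} is exactly $\delta_{14}$. On $\{T_1>0,T_2>0\}$ it equals $\min\{d_{01},\alpha_4(1+T+p_1T_1+p_2T_2)^k\}S_1^k$, and otherwise it equals $d_{01}S_1^k$. It therefore suffices to check the hypothesis $P(d_{1,0,0}(T,T_1,T_2)<d_{01})\neq 0$. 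Note that the condition printed in the statement, ``$\alpha_3<c_{01}$'', should be read as $\alpha_4<d_{01}$, which is the condition I verify.

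\textbf{Step 1: positive probability of the event.} I need
\[
P\bigl(T_1>0,\ T_2>0,\ \alpha_4(1+T+p_1T_1+p_2T_2)^k<d_{01}\bigr)>0
\]
for every parameter point with $\sigma_1\le\sigma_2$. Since $k>0$, the event is $\{T_1>0,\,T_2>0,\,1+T+p_1T_1+p_2T_2<(d_{01}/\alpha_4)^{1/k}\}$, and the right-hand side is strictly greater than $1$ when $\alpha_4<d_{01}$. Writing $T=S_2/S_1$, $T_1=X_1/S_1$, $T_2=X_2/S_1$, the event contains the set where $S_1$ is large, $S_2$ is bounded, and $X_1,X_2$ lie in small positive intervals. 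Here I use that $X_i$, $S_1$ and $S_2$ are independent with densities positive on their supports.

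\textbf{Step 2: handling the location parameters.} The support of $X_i$ is $(\mu_i,\infty)$. I need $X_i>0$ and $X_i/S_1$ small simultaneously, which is achieved by making $S_1$ large enough. Taking $S_1$ large keeps $p_1T_1+p_2T_2$ below any fixed $\epsilon$ whatever the values of $\mu_1,\mu_2$. The set of admissible $(x_1,x_2,s_1,s_2)$ is then a nonempty open set intersected with the support, so it has positive probability. This is the step that needs some care: when $\mu_i$ is large and positive, $X_i$ is bounded below by a large number and $S_1$ must be correspondingly larger, but the Gamma distribution of $S_1$ has unbounded support, so this still works.

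\textbf{Step 3: conclusion.} Theorem \ref{th2.13} then gives that the risk of $\delta_{14}$ is nowhere larger than that of $\delta_{01}$. The domination is strict because the conditional-risk inequality in the proof of Theorem \ref{th} is strict on the event of Step 1: strict convexity of $L$ makes $R_1(d)$ strictly increasing for $d>d_{\eta,\eta_1,\eta_2}(t,t_1,t_2)$, and that event has positive probability. The only genuine work beyond quoting the theorem is the support argument in Step 2.
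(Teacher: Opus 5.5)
Your proposal is correct and takes the same route as the paper: the corollary is Theorem \ref{th2.13} applied with $\varphi_4\equiv d_{01}$, with the hypothesis read as $\alpha_4<d_{01}$. Your check that the event $\{T_1>0,\ T_2>0,\ d_{1,0,0}(T,T_1,T_2)<d_{01}\}$ has positive probability at every parameter point, together with the strict-convexity argument for strict domination, fills in details the paper leaves implicit.
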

		\begin{example}
			\begin{enumerate}
				\item [(i)] For quadratic loss function $L_1(t)$, we have $\alpha_1= \frac{\Gamma(p_1+p_2+k-2)}{\Gamma(p_1+p_2+2k-2)}$, $\alpha_2= \frac{\Gamma(p_1+p_2+k-1)}{\Gamma(p_1+p_2+2k-1)}$ and $\alpha_4= \frac{\Gamma(p_1+p_2+k)}{\Gamma(p_1+p_2+2k)}$. The improved estimators are obtained as follows
				\begin{align*}
					&\delta_{11}^1(X,S)=\min\left\{\frac{\Gamma(p_1+k-1)}{\Gamma(p_1+2k-1)}, \alpha_1(1+T)^k\right\}S_1^k\\
					&\delta_{12}^1(X,S)=\begin{cases}\min \left\{\frac{\Gamma(p_1+k-1)}{\Gamma(p_1+2k-1)}, \alpha_2\left(1+T+p_1T_1\right)^k\right\} S_1^k, & T_1>0 \\ 
						\frac{\Gamma(p_1+k-1)}{\Gamma(p_1+2k-1)} S_1^k, & \text { otherwise }\end{cases}\\
					&\delta_{13}^1(X,S)=\begin{cases}\min \left\{\frac{\Gamma(p_1+k-1)}{\Gamma(p_1+2k-1)}, \alpha_2\left(1+T+p_2T_2\right)^k\right\} S_1^k, & T_2>0 \\ 
						\frac{\Gamma(p_1+k-1)}{\Gamma(p_1+2k-1)} S_1^k, & \text { otherwise }\end{cases}\\
					&\delta_{14}^1(X,S)=\begin{cases}\min \left\{\frac{\Gamma(p_1+k-1)}{\Gamma(p_1+2k-1)}, \alpha_4\left(1+T+p_1T_1+p_2T_2\right)^k\right\} S_1^k, & T_1>0,\ T_2>0 \\ 
						\frac{\Gamma(p_1+k-1)}{\Gamma(p_1+2k-1)} S_1^k, & \text { otherwise }\end{cases}\\
				\end{align*}
				\item [(ii)] For entropy loss function $L_2(t)$, we have $\alpha_1= \frac{\Gamma(p_1+p_2-2)}{\Gamma(p_1+p_2+k-2)}$, $\alpha_2= \frac{\Gamma(p_1+p_2-1)}{\Gamma(p_1+p_2+k-1)}$ and $\alpha_4= \frac{\Gamma(p_1+p_2)}{\Gamma(p_1+p_2+k)}$. The improved estimators are obtained as follows
				\begin{align*}
					&\delta_{11}^2(X,S)=\min\left\{\frac{\Gamma(p_1-1)}{\Gamma(p_1+k-1)}, \alpha_1(1+T)^k\right\}S_1^k\\
					&\delta_{12}^2(X,S)=\begin{cases}\min \left\{\frac{\Gamma(p_1-1)}{\Gamma(p_1+k-1)}, \alpha_2\left(1+T+p_1T_1\right)^k\right\} S_1^k, & T_1>0 \\ 
						\frac{\Gamma(p_1-1)}{\Gamma(p_1+k-1)} S_1^k, & \text { otherwise }\end{cases}\\
					&\delta_{13}^2(X,S)=\begin{cases}\min \left\{\frac{\Gamma(p_1-1)}{\Gamma(p_1+k-1)}, \alpha_2\left(1+T+p_2T_2\right)^k\right\} S_1^k, & T_2>0 \\ 
						\frac{\Gamma(p_1-1)}{\Gamma(p_1+k-1)} S_1^k, & \text { otherwise }\end{cases}\\
					&\delta_{14}^2(X,S)=\begin{cases}\min \left\{\frac{\Gamma(p_1-1)}{\Gamma(p_1+k-1)}, \alpha_4\left(1+T+p_1T_1+p_2T_2\right)^k\right\} S_1^k, & T_1>0,\ T_2>0 \\ 
						\frac{\Gamma(p_1-1)}{\Gamma(p_1+k-1)} S_1^k, & \text { otherwise }\end{cases}\\
				\end{align*}
				\item [(iii)] For symmetric loss function $L_3(t)$, we have $\alpha_1= \sqrt{\frac{\Gamma(p_1+p_2-k-2)}{\Gamma(p_1+p_2+k-2)}}$, $\alpha_2= \sqrt{\frac{\Gamma(p_1+p_2-k-1)}{\Gamma(p_1+p_2+k-1)}}$ and $\alpha_4=\sqrt{\frac{\Gamma(p_1+p_2-k)}{\Gamma(p_1+p_2+k)}}$. The improved estimators are obtained as follows
				\begin{align*}
					&\delta_{11}^3(X,S)=\min\left\{\sqrt{\frac{\Gamma(p_1-k-1)}{\Gamma(p_1+k-1)}} , \alpha_1(1+T)^k\right\}S_1^k\\
					&\delta_{12}^3(X,S)=\begin{cases}\min \left\{\sqrt{\frac{\Gamma(p_1-k-1)}{\Gamma(p_1+k-1)}} , \alpha_2\left(1+T+p_1T_1\right)^k\right\} S_1^k, & T_1>0 \\ 
						\sqrt{\frac{\Gamma(p_1-k-1)}{\Gamma(p_1+k-1)}} S_1^k, & \text { otherwise }\end{cases}\\
					&\delta_{13}^3(X,S)=\begin{cases}\min \left\{\sqrt{\frac{\Gamma(p_1-k-1)}{\Gamma(p_1+k-1)}} , \alpha_2\left(1+T+p_2T_2\right)^k\right\} S_1^k, & T_2>0 \\ 
						\sqrt{\frac{\Gamma(p_1-k-1)}{\Gamma(p_1+k-1)}} S_1^k, & \text { otherwise }\end{cases}\\
					&\delta_{14}^3(X,S)=\begin{cases}\min \left\{\sqrt{\frac{\Gamma(p_1-k-1)}{\Gamma(p_1+k-1)}} , \alpha_4\left(1+T+p_1T_1+p_2T_2\right)^k\right\} S_1^k, & T_1>0,\ T_2>0 \\ 
						\sqrt{\frac{\Gamma(p_1-k-1)}{\Gamma(p_1+k-1)}} S_1^k, & \text { otherwise }\end{cases}\\
				\end{align*}
				\item [(iv)] For linex loss function $L_4(t)$, the improved estimators are obtained as follows
				\begin{align*}
					&\delta_{11}^4(X,S)=\min\left\{ d_{01}, \alpha_1(1+T)^k\right\}S_1^k\\
					&\delta_{12}^4(X,S)=\begin{cases}\min \left\{d_{01} , \alpha_2\left(1+T+p_1T_1\right)^k\right\} S_1^k, & T_1>0 \\ 
						d_{01}S_1^k, & \text { otherwise }\end{cases}\\
					&\delta_{13}^4(X,S)=\begin{cases}\min \left\{d_{01} , \alpha_2\left(1+T+p_2T_2\right)^k\right\} S_1^k, & T_2>0 \\ 
						d_{01}S_1^k, & \text { otherwise }\end{cases}\\
					&\delta_{14}^4(X,S)=\begin{cases}\min \left\{d_{01} , \alpha_4\left(1+T+p_1T_1+p_2T_2\right)^k\right\} S_1^k, & T_1>0,\ T_2>0 \\ 
						d_{01} S_1^k, & \text { otherwise }\end{cases}\\
				\end{align*}
				where $\alpha_1$, $\alpha_2$ and $\alpha_4$ are a solution of the following equation respectively
				\begin{align*}
					&\int_{0}^{\infty} y_1^{p_1+p_2+k-3} e^{-y_1+\alpha(\alpha_1y_1^k-1)}dy_1=1\\
					&\int_{0}^{\infty} y_2^{p_1+p_2+k-2} e^{-y_2+\alpha(\alpha_2y_2^k-1)}dy_2=1\\
					&\int_{0}^{\infty} y_4^{p_1+p_2+k-2} e^{-y_4+\alpha(\alpha_4y_4^k-1)}dy_4=1\\
				\end{align*}
			\end{enumerate}
		\end{example}
		In the following theorem, we derive a class of improved estimators using the IERD approach \cite{kubokawa1994unified}. The joint density of $V_1$ and $T$ is 
		\begin{align}
			f_{\eta}(t,v_1)\propto \eta^{p_2-1} e^{-v_1(1+\eta t)}t^{p_2-2}v_1^{p_1+p_2-3},\ \ v_1>0,\ \ t>0,\ \ 0<\eta\leq1
		\end{align}
		Define,
		\begin{align*}
			F_{\eta}(x,v_1)=\int_{0}^{x}f_{\eta}(u,v_1)du
		\end{align*}
		\begin{theorem}\label{th3.3}
			Let the function $\varphi_1$ satisfies the following conditions.
			\begin{enumerate}
				
				\item[(i)] $\varphi_1(t)$ is increasing function in $t$ and $\lim\limits_{t\rightarrow \infty}\varphi_1(t)=d_{01}$.
				
				\item[(ii)] $\int_{0}^{\infty}L'(\varphi_1(t)v_1^k)v_1^k F_1(t,v_1)dv_1\geq0$.
			\end{enumerate}
			Then the risk of $\delta_{\varphi_1}$ in $\mathcal{D}_1$ is uniformly smaller than the estimator $\delta_{01}$ under $L(t)$.
		\end{theorem}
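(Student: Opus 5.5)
We follow Kubokawa's integral expression of risk difference (IERD). Write the BAEE as the limit $\delta_{01}=\lim_{s\to\infty}\varphi_1(s)S_1^k$, which is legitimate by condition (i). Since $V_1=S_1/\sigma_1$, the loss is $L(\varphi_1(T)V_1^k)$. Using absolute continuity of $\varphi_1$ (which holds a.e. for a monotone function; otherwise approximate by smooth increasing functions), we write
\begin{align*}
L\left(d_{01}V_1^k\right)-L\left(\varphi_1(T)V_1^k\right)=\int_{T}^{\infty}\frac{d}{ds}L\left(\varphi_1(s)V_1^k\right)ds=\int_{T}^{\infty}L'\left(\varphi_1(s)V_1^k\right)\varphi_1'(s)V_1^k\,ds .
\end{align*}
Taking expectations under the joint density $f_\eta(t,v_1)$ and interchanging the order of integration (Fubini, justified by (A2)) gives
\begin{align*}
R(\delta_{01})-R(\delta_{\varphi_1})=\int_0^\infty \varphi_1'(s)\left\{\int_0^\infty L'\left(\varphi_1(s)v_1^k\right)v_1^k F_\eta(s,v_1)\,dv_1\right\}ds ,
\end{align*}
because the indicator $\{t<s\}$ integrated against $f_\eta(t,v_1)\,dt$ yields exactly $F_\eta(s,v_1)$. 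Since $\varphi_1'\ge 0$ by (i), it suffices to show that the inner integral is nonnegative for every $s>0$ and every $0<\eta\le 1$.

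The main step is to pass from $\eta$ to $\eta=1$, where condition (ii) applies. We write $F_\eta(s,v_1)=F_1(s,v_1)\cdot\big(F_\eta(s,v_1)/F_1(s,v_1)\big)$. Substituting $u=\eta w$ gives
\begin{align*}
F_\eta(s,v_1)\propto v_1^{p_1+p_2-3}\int_0^{\eta s}w^{p_2-2}e^{-v_1(1+w)}dw ,
\end{align*}
so we need the ratio
\begin{align*}
\frac{F_\eta(s,v_1)}{F_1(s,v_1)}=\frac{\int_0^{\eta s}w^{p_2-2}e^{-v_1 w}dw}{\int_0^{s}w^{p_2-2}e^{-v_1 w}dw}
\end{align*}
to be nondecreasing in $v_1$ for $\eta\le1$. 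Substituting $w=sz$, the ratio equals $P(Z\le \eta \mid v_1)$ for a density on $(0,1)$ proportional to $z^{p_2-2}e^{-v_1 s z}$. This family is stochastically decreasing in $v_1$ (monotone likelihood ratio in $z$), so the ratio is increasing in $v_1$.

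Next, $L'(\varphi_1(s)v_1^k)$ is increasing in $v_1$ by convexity (with $k>0$) and changes sign from negative to positive at some $v_0$. Let $h(v_1)=F_\eta/F_1$, which is increasing. Then
\begin{align*}
\int L'\,v_1^k F_1\, h\,dv_1\ \ge\ h(v_0)\int L'\,v_1^k F_1\,dv_1\ \ge\ 0 .
\end{align*}
The first inequality is the standard sign-change argument: on $\{v_1<v_0\}$ the integrand is negative and $h\le h(v_0)$, and on $\{v_1>v_0\}$ it is positive and $h\ge h(v_0)$. The second inequality is condition (ii). This gives a nonpositive risk difference, that is, $R(\delta_{\varphi_1})\le R(\delta_{01})$ uniformly in $0<\eta\le 1$; this is the same Bobotas–Kourouklis type lemma already used in Theorem \ref{th1}. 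The point needing the most care is verifying the monotonicity of $h$ and the existence of a single sign change of $L'$. We also confirm that $F_1$ in (ii) is the same unnormalized function, so positive constants do not matter.
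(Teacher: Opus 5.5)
Your argument is correct and takes essentially the paper's route. The paper simply defers to Kubokawa's (1994) IERD proof, and you carry that proof out: the integral expression of the risk difference through $F_\eta(s,v_1)$, the monotonicity of $F_\eta(s,v_1)/F_1(s,v_1)$ in $v_1$ for $\eta\le 1$, and the single sign change of $L'(\varphi_1(s)v_1^k)$, which together reduce everything to condition (ii) at $\eta=1$.

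One correction: your aside that a monotone $\varphi_1$ is absolutely continuous is false (the Cantor function is a counterexample), and smoothing by increasing functions need not preserve (ii). You should simply assume $\varphi_1$ is absolutely continuous, which the paper's appeal to Kubokawa's theorem also implicitly requires.
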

		\noindent\textbf{Proof.} The proof proceeds in a similar manner to that of Theorem 4.1 in \cite{kubokawa1994double}

		\begin{corollary}
			Assume that the function $\varphi_1(t)$ satisfies the following conditions:
			\begin{enumerate}
				\item[(i)] $\varphi_1(t)$ is non decreasing and $\lim\limits_{t\rightarrow \infty}\varphi_1(t)= \frac{\Gamma(p_1+k-1)}{\Gamma(p_1+2k-1)}$.
				\item [(ii)] $\varphi_1(t)\geq \varphi_{01}^1(t)=\frac{\Gamma(p_1+p_2+k-2) B(p_2-1,p_1+k-1)}{\Gamma(p_1+p_2+2k-2) B(p_2-1,p_1+k-1)}$
				
			\end{enumerate}
			Then the risk of the estimator $\delta_{\varphi_{1}}$ given in $\mathcal{D}_1$ is nowhere larger than that of the BAEE $\delta_{01}^1$.
		\end{corollary}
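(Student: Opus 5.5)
The plan is to specialise Theorem \ref{th3.3} to the quadratic loss $L_1(t)=(t-1)^2$. First we check that its condition (i) holds. By Example \ref{1.1}(i), $d_{01}=\Gamma(p_1+k-1)/\Gamma(p_1+2k-1)$, so condition (i) of the corollary is exactly condition (i) of Theorem \ref{th3.3}. It then remains to show that condition (ii) of the corollary implies condition (ii) of Theorem \ref{th3.3}, namely
\begin{align*}
\int_0^\infty L_1'(\varphi_1(t)v_1^k)\,v_1^k F_1(t,v_1)\,dv_1\ge 0 .
\end{align*}
Since $L_1'(s)=2(s-1)$, this inequality is linear in $\varphi_1(t)$. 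It is equivalent to
\begin{align*}
\varphi_1(t)\ \ge\ \varphi^1_{01}(t):=\frac{\int_0^\infty v_1^k F_1(t,v_1)\,dv_1}{\int_0^\infty v_1^{2k} F_1(t,v_1)\,dv_1},
\end{align*}
so the whole task reduces to evaluating this ratio in closed form.

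For the evaluation we use $F_1(t,v_1)=\int_0^t f_1(u,v_1)\,du$ with $f_1(u,v_1)\propto e^{-v_1(1+u)}u^{p_2-2}v_1^{p_1+p_2-3}$. We apply Fubini and integrate over $v_1$ first. For $m=k$ or $m=2k$,
\begin{align*}
\int_0^\infty v_1^{m+p_1+p_2-3}e^{-v_1(1+u)}\,dv_1=\Gamma(p_1+p_2+m-2)(1+u)^{-(p_1+p_2+m-2)} .
\end{align*}
The remaining $u$-integral is $\int_0^t u^{p_2-2}(1+u)^{-(p_1+p_2+m-2)}\,du$. Substituting $w=u/(1+u)$ turns it into the incomplete beta integral $\int_0^{t/(1+t)}w^{p_2-2}(1-w)^{p_1+m-1}\,dw$, that is $B_{t/(1+t)}(p_2-1,p_1+m)$; the exponent $p_1+m-1$ comes directly from this substitution. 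The common normalising constants cancel, leaving
\begin{align*}
\varphi^1_{01}(t)=\frac{\Gamma(p_1+p_2+k-2)\,B_{t/(1+t)}(p_2-1,p_1+k)}{\Gamma(p_1+p_2+2k-2)\,B_{t/(1+t)}(p_2-1,p_1+2k)} .
\end{align*}
This ratio of incomplete beta functions is what the corollary's $\varphi^1_{01}$ should denote. The displayed formula in the statement, with identical numerator and denominator beta factors, appears to be a typo for it. In the proof we will state the correct expression and keep track of the beta parameters carefully, since an off-by-one in the second beta argument is easy to make here.

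As a consistency check we let $t\to\infty$, so that the incomplete beta functions become complete. The limit is then $\frac{\Gamma(p_1+p_2+k-2)\Gamma(p_1+k)\Gamma(p_1+p_2+2k-2)}{\Gamma(p_1+p_2+2k-2)\Gamma(p_1+p_2+k-2)\Gamma(p_1+2k)}=\Gamma(p_1+k)/\Gamma(p_1+2k)$. This should agree with the limit in condition (i). It does up to the same index shift that already appears in the paper's density bookkeeping: the conditional law $Gamma(p_1+p_2-2,\cdot)$ used earlier versus the exponent $p_1+p_2-3$ in $f_\eta$. I expect this reconciliation of the Gamma shape parameters to be the only real obstacle. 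I will fix the density of $V_1\sim Gamma(p_1-1,1)$ consistently, by recomputing the joint density of $(V_1,T)$ from scratch, and then rerun the two integrals above with matching indices.

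Once these indices are consistent, the corollary's condition (ii) is exactly condition (ii) of Theorem \ref{th3.3}, and that theorem gives the domination of $\delta^1_{01}$. The non-strict monotonicity assumed in the corollary ("non decreasing") is harmless, because the IERD argument of \cite{kubokawa1994unified} uses only $\varphi_1'\ge0$.
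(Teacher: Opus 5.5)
Your reduction is what the paper intends; it gives no separate argument for this corollary. You specialise Theorem~\ref{th3.3} to $L_1$, note that its condition (ii) is linear in $\varphi_1(t)$, so it is equivalent to $\varphi_1(t)\ge\int_0^\infty v_1^kF_1(t,v_1)\,dv_1\big/\int_0^\infty v_1^{2k}F_1(t,v_1)\,dv_1$, and evaluate by Fubini. The gap is in the closed form, which is the real content of the corollary.

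In the substitution $w=u/(1+u)$ you have $u^{p_2-2}=w^{p_2-2}(1-w)^{2-p_2}$, $(1+u)^{-(p_1+p_2+m-2)}=(1-w)^{p_1+p_2+m-2}$ and $du=(1-w)^{-2}\,dw$. So the power of $(1-w)$ is $p_1+m-2$, not $p_1+m-1$, and the $u$-integral is $B_{t/(1+t)}(p_2-1,p_1+m-1)$. A quick check with $p_2=2$: $\int_0^\infty(1+u)^{-(p_1+m)}du=1/(p_1+m-1)=B(1,p_1+m-1)$. Hence
\[
\varphi^1_{01}(t)=\frac{\Gamma(p_1+p_2+k-2)\,B_{t/(1+t)}(p_2-1,p_1+k-1)}{\Gamma(p_1+p_2+2k-2)\,B_{t/(1+t)}(p_2-1,p_1+2k-1)},
\]
and its limit as $t\to\infty$ is exactly $\Gamma(p_1+k-1)/\Gamma(p_1+2k-1)=d_{01}$. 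The paper's numerator is therefore correct, and only its denominator is a typo ($p_1+k-1$ should be $p_1+2k-1$). This matches the pattern $p_1+m-1$ in $\varphi^2_{01}$ and $\varphi^3_{01}$. Your proposed ``correction'' shifts both beta arguments by one and is wrong.

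Your consistency check did detect this, but you blamed the wrong step. The paper's density has no index inconsistency. A $Gamma(p_1+p_2-2,(1+\eta t)^{-1})$ density is proportional to $v_1^{p_1+p_2-3}e^{-v_1(1+\eta t)}$. Recomputing from $V_1\sim Gamma(p_1-1,1)$, $V_2\sim Gamma(p_2-1,1)$ and $V_2=\eta TV_1$ (Jacobian $\eta v_1$) reproduces $f_\eta$ exactly.

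So your planned repair, rederiving the joint density, would leave the discrepancy $\Gamma(p_1+k)/\Gamma(p_1+2k)\neq d_{01}$ in place. With your formula, the claimed equivalence between the corollary's condition (ii) and condition (ii) of Theorem~\ref{th3.3} is false. Correct the Jacobian exponent and the rest of your argument goes through as stated.
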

		
		\begin{corollary}
			Assume that the function $\varphi_1(t)$ satisfies the following conditions:
			\begin{enumerate}
				\item[(i)] $\varphi_1(t)$ is non decreasing and $\lim\limits_{t\rightarrow \infty}\varphi_1(t)= \frac{\Gamma(p_1-1)}{\Gamma(p_1+k-1)}$
				\item [(ii)] $\varphi_1(t)\geq \varphi_{01}^2(t)= \frac{\Gamma(p_1+p_2-2) B(p_2-1,p_1-1)}{\Gamma(p_1+p_2+k-2) B(p_2-1,p_1+k-1)}$
				
			\end{enumerate}
			Then under the loss function $L_2(t)$, the risk of the estimator $\delta_{\varphi_{1}}$ given in $\mathcal{D}_1$ is nowhere larger than that of the BAEE $\delta_{01}^2$.
		\end{corollary}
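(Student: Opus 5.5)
The plan is to obtain the corollary as a direct specialization of Theorem \ref{th3.3} to the entropy loss $L_2(t)=t-\ln t-1$. For this loss $L_2^{\prime}(t)=1-1/t$. By Example \ref{1.1}(ii), the BAEE constant is $d_{01}=\Gamma(p_1-1)/\Gamma(p_1+k-1)$. Hence condition (i) of the corollary is exactly condition (i) of Theorem \ref{th3.3}, and the work is to show that condition (ii) of the corollary is equivalent to condition (ii) of the theorem.

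Substituting $L_2^{\prime}$ into condition (ii) of Theorem \ref{th3.3} gives
\begin{align*}
\int_{0}^{\infty}\left(1-\frac{1}{\varphi_1(t)v_1^k}\right)v_1^kF_1(t,v_1)\,dv_1\geq 0 .
\end{align*}
Since $\varphi_1(t)>0$, this is equivalent to
\begin{align*}
\varphi_1(t)\geq \frac{\int_0^\infty F_1(t,v_1)\,dv_1}{\int_0^\infty v_1^kF_1(t,v_1)\,dv_1}.
\end{align*}

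Next I compute both integrals explicitly. Using $F_1(t,v_1)=\int_0^t u^{p_2-2}v_1^{p_1+p_2-3}e^{-v_1(1+u)}\,du$ (the normalizing constant cancels in the ratio), I interchange the order of integration by Fubini and integrate out $v_1$ with the gamma integral. This gives
\begin{align*}
\int_0^\infty v_1^{k}F_1(t,v_1)\,dv_1=\Gamma(p_1+p_2+k-2)\int_0^t u^{p_2-2}(1+u)^{-(p_1+p_2+k-2)}\,du .
\end{align*}
The integral $\int_0^\infty F_1(t,v_1)\,dv_1$ is the same expression with $k=0$. The substitution $w=u/(1+u)$ turns each $u$-integral into an incomplete beta function $B_{t/(1+t)}(p_2-1,\cdot)$, with second argument $p_1-1$ and $p_1+k-1$ respectively. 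The ratio is therefore
\begin{align*}
\varphi_{01}^2(t)=\frac{\Gamma(p_1+p_2-2)\,B_{t/(1+t)}(p_2-1,p_1-1)}{\Gamma(p_1+p_2+k-2)\,B_{t/(1+t)}(p_2-1,p_1+k-1)} .
\end{align*}
This is the bound in condition (ii) of the corollary, with the beta functions read as incomplete ones evaluated at $t/(1+t)$. With this bound in hand, Theorem \ref{th3.3} yields the domination over $\delta_{01}^2$.

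As a consistency check, letting $t\to\infty$ the incomplete betas become complete ones. Writing $B(a,b)=\Gamma(a)\Gamma(b)/\Gamma(a+b)$, the ratio collapses to $\Gamma(p_1-1)/\Gamma(p_1+k-1)=d_{01}$. So the limit in condition (i) is compatible with condition (ii), and a choice such as $\varphi_1=\varphi_{01}^2$ is admissible if it is nondecreasing.

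I expect the main obstacle to be the bookkeeping rather than any new idea. One point is interpreting the displayed $\varphi_{01}^2$ correctly as a $t$-dependent ratio of incomplete beta functions. The other is justifying the interchange of integrals and the finiteness of all moments. These need $p_1>1$, $p_2>1$ and $p_1+k-1>0$, which assumption (A2) and $k>0$ provide.
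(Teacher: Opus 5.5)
Your proposal is correct and is essentially the paper's argument. The corollary is just Theorem \ref{th3.3} specialized to $L_2$: since $L_2'(t)=1-1/t$, condition (ii) becomes $\varphi_1(t)\geq \int_0^\infty F_1(t,v_1)\,dv_1 / \int_0^\infty v_1^kF_1(t,v_1)\,dv_1$, and the gamma integrals reduce this to the displayed ratio. You also read the beta functions correctly as incomplete ones evaluated at $t/(1+t)$; with complete betas the bound collapses to $d_{01}$ and the corollary becomes trivial, forcing $\varphi_1\equiv d_{01}$. In doing so you supply the computation the paper leaves implicit.
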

		
		\begin{corollary}
			Under the loss function $L_3(t)$, the risk of the estimator $\delta_{\varphi_1}$ given in $\mathcal{D}_1$ is nowhere larger than that of $\delta_{01}^3$, provide the function $\varphi_1(t)$ satisfies the following properties
			\begin{enumerate}
				\item[(i)] $\varphi_1(t)$ is non decreasing and $\lim\limits_{t\rightarrow \infty}\varphi_1(t) = \sqrt{\frac{\Gamma(p_1-k-1)}{\Gamma(p_1+k-1)}}$
				\item [(ii)] $\varphi_1(t) \geq \varphi_{01}^3(t) = \sqrt{\frac{\Gamma(p_1+p_2-k-2) B(p_2-1,p_1-k-1)}{\Gamma(p_1+p_2+k-2) B(p_2-1,p_1+k-1)}}$
			\end{enumerate}
			Then the risk of the estimator $\delta_{\varphi_{1}}$ given in $\mathcal{D}_1$ is nowhere larger than that of the BAEE $\delta_{01}^3$.
		\end{corollary}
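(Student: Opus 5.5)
This corollary is the specialization of Theorem \ref{th3.3} to the symmetric loss $L_3(t)=t+1/t-2$. The plan is to check conditions (i) and (ii) of that theorem with $L=L_3$. Condition (i) of the theorem matches condition (i) of the corollary once we note that $d_{01}=\sqrt{\Gamma(p_1-k-1)/\Gamma(p_1+k-1)}$ is the BAEE constant from Example \ref{1.1}(iii). All the work is in rewriting condition (ii),
\[
\int_{0}^{\infty}L_3'\left(\varphi_1(t)v_1^k\right)v_1^kF_1(t,v_1)\,dv_1\geq 0,
\]
as an explicit lower bound on $\varphi_1(t)$.

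First, $L_3'(u)=1-u^{-2}$, so $L_3'(\varphi v_1^k)v_1^k=v_1^k-\varphi^{-2}v_1^{-k}$. Condition (ii) is therefore equivalent to
\[
\varphi_1(t)^2\geq \frac{\int_0^\infty v_1^{-k}F_1(t,v_1)\,dv_1}{\int_0^\infty v_1^{k}F_1(t,v_1)\,dv_1}.
\]
Second, I compute these integrals. Writing $F_1(t,v_1)=\int_0^t u^{p_2-2}v_1^{p_1+p_2-3}e^{-v_1(1+u)}\,du$ (the normalizing constant cancels in the ratio), I apply Fubini and integrate in $v_1$ first. For $m=\pm k$ this gives
\[
\int_0^\infty v_1^{m}F_1(t,v_1)\,dv_1=\Gamma(p_1+p_2+m-2)\int_0^t \frac{u^{p_2-2}}{(1+u)^{p_1+p_2+m-2}}\,du .
\]
Substituting $w=u/(1+u)$ turns the $u$-integral into an incomplete beta integral $B_{t/(1+t)}(p_2-1,p_1+m-1)$ over $(0,t/(1+t))$. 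Taking the square root of the ratio yields
\[
\varphi_{01}^3(t)=\sqrt{\frac{\Gamma(p_1+p_2-k-2)\,B_{t/(1+t)}(p_2-1,p_1-k-1)}{\Gamma(p_1+p_2+k-2)\,B_{t/(1+t)}(p_2-1,p_1+k-1)}}.
\]
This is the displayed bound in (ii), where $B(\cdot,\cdot)$ is read as the incomplete beta function at $t/(1+t)$. As a consistency check, letting $t\to\infty$ turns both factors into complete beta functions, and the bound tends to $d_{01}$, as the gamma identities show.

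The main obstacle is integrability. The negative moment requires $p_1-k-1>0$, i.e.\ $p_1>k+1$, which is exactly the condition under which $\delta_{01}^3$ is finite. Under that condition the Fubini interchange is justified by positivity of the integrand, and assumption (A2) holds. With (i) and (ii) verified, Theorem \ref{th3.3} gives the result.
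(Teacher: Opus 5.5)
Your proposal is correct and follows the paper's route: the corollary is Theorem~\ref{th3.3} specialized to $L_3$, with condition (ii) reduced via $L_3'(u)=1-u^{-2}$ and Tonelli to a ratio of incomplete beta integrals. Your reading of $B(\cdot,\cdot)$ as the incomplete beta function at $t/(1+t)$ is the intended one. With complete beta functions the bound equals the constant $d_{01}$, which would force $\varphi_1\equiv d_{01}$.
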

		
		\begin{remark}
			For the linex loss function $L_4(t)$ the closed form are not be to find.
		\end{remark}
		
		\begin{remark}\label{bz1}
			From the above corollaries, we derive a class of improved estimators under the loss functions $L_1(t)$, $L_2(t)$ and $L_3(t)$. The corresponding boundary estimators are $\delta_{\varphi_{01}^1}=\varphi_{01}^1S_1^k$, $\delta_{\varphi_{01}^2}=\varphi_{01}^2S_1^k$ and $\delta_{\varphi_{01}^3}=\varphi_{01}^3S_1^k$. These boundary estimators are of the \cite{brewster1974improving}-type.
		\end{remark}	
		\begin{remark}
			In Remark \ref{bz1}, the \cite{brewster1974improving}-type estimator $\delta_{\varphi_{01}}$ is a generalized Bayes estimator for $\sigma_1^k$ under the non-informative prior 
			\begin{align*}
				\Pi(\mu_1,\mu_2,\sigma_1,\sigma_2)=\frac{1}{\sigma_1\sigma_2},~~ \mu_1<X_1,~ \mu_2<X_2,~ \text{and}~ 0<\sigma_1\leq\sigma_2.
			\end{align*}
		\end{remark}
		
		\section{Improved estimation of $\sigma_2^k$}\label{se3}
		We consider the class of estimator of the form 
		\begin{align*}
			\mathcal{C}_1=\left\{ \delta_{\xi_1}:\ \  \delta_{\xi_1}\left(X,S\right)=\xi_1(W)S_2^k\right\}
		\end{align*}
		where $W=\frac{S_1}{S_2}$ and $\xi_1(\cdot)$ is a positive measurable function.
		
		\begin{theorem}\label{th2}
			Let $Z_1$ be a random variable having gamma distribution $Gamma(p_1+p_2+k-2,1)$ and $\beta_{1}$ be the unique solution of 
			\begin{equation}
				E(L'(\beta_{1}Z_1^k))=0
			\end{equation}
			Consider $\xi_{01}(W)=\max\left\{ \xi_1(W), d_2(W)\right\}$, then the risk of the estimator $\delta_{\xi_{01}}=\xi_{01}(W)S_2^k$ is nowhere larger than that of the estimator $\delta_{\xi_1}$ provided $P(\xi_1(W)<d_1(W))\neq0$ holds true.
		\end{theorem}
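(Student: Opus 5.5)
The argument mirrors the one given before Theorem \ref{th1}, now with the roles of the two samples exchanged and the inequalities reversed. Throughout, $d_2(W)=\beta_1(1+W)^k$, and the proviso in the statement is read as $P(\xi_1(W)<d_2(W))\neq0$.

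\textbf{Step 1 (reduction to a conditional problem).} Write $V_2=S_2/\sigma_2$, so that
\[
R(\delta_{\xi_1},\sigma_1,\sigma_2)=E^W\bigl[E\{L(\xi_1(W)V_2^k)\mid W\}\bigr].
\]
It suffices to compare the conditional risks $R_2(d;\sigma_1,\sigma_2)=E_\lambda\{L(dV_2^k)\mid W=w\}$ pointwise in $w$, with $\lambda=\sigma_2/\sigma_1\ge 1$. From the independent gamma densities of $S_1$ and $S_2$, the conditional law of $V_2$ given $W=w$ has density proportional to
\[
v_2^{p_1+p_2-3}e^{-v_2(1+\lambda w)},
\]
that is, $V_2\mid W=w\sim Gamma(p_1+p_2-2,(1+\lambda w)^{-1})$. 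By (A1) and strict convexity, $R_2$ is minimized at the unique $d_\lambda(w)$ solving $E_\lambda\bigl(L'(d_\lambda(w)V_2^k)V_2^k\mid W=w\bigr)=0$.

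\textbf{Step 2 (monotonicity in $\lambda$).} As $\lambda$ increases, the rate $1+\lambda w$ increases, so $V_2\mid W$ becomes stochastically smaller. The family has monotone likelihood ratio in $v_2$, decreasing in $\lambda$. Since $v\mapsto L'(dv^k)v^k$ changes sign once, from negative to positive, Lemma A.2 of \cite{bobotas2010estimation} gives, for every $d>0$,
\[
E_\lambda\bigl(L'(dV_2^k)V_2^k\mid W=w\bigr)\le E_1\bigl(L'(dV_2^k)V_2^k\mid W=w\bigr),\qquad \lambda\ge1 .
\]
This is the opposite direction to Section \ref{se2}, where $\eta\le1$. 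Take $d=d_2(w)$, defined as the root of the $\lambda=1$ equation. Then $E_\lambda\bigl(L'(d_2(w)V_2^k)V_2^k\mid W=w\bigr)\le 0$. Because $L'$ is increasing, the map $d\mapsto E_\lambda\bigl(L'(dV_2^k)V_2^k\mid W=w\bigr)$ is increasing, and hence $d_\lambda(w)\ge d_2(w)$ for all $\lambda\ge1$.

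\textbf{Step 3 (identifying $d_2$).} At $\lambda=1$, substitute $z=v_2(1+w)$. The extra factor $v_2^k$ tilts the shape parameter, giving
\[
E\,L'\bigl(d_2(w)(1+w)^{-k}Z_1^k\bigr)=0,\qquad Z_1\sim Gamma(p_1+p_2+k-2,1).
\]
Comparing with the defining equation of $\beta_1$ yields $d_2(w)=\beta_1(1+w)^k$.

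\textbf{Step 4 (conclusion).} On the set where $\xi_1(w)<d_2(w)$ we have
\[
\xi_1(w)<\xi_{01}(w)=d_2(w)\le d_\lambda(w).
\]
Since $R_2(\cdot)$ is strictly convex with minimum at $d_\lambda(w)$, it is strictly decreasing on $(0,d_\lambda(w)]$, so $R_2(\xi_{01}(w))<R_2(\xi_1(w))$ there. Elsewhere $\xi_{01}=\xi_1$ and the conditional risks coincide. Taking expectation over $W$ gives the claimed domination, which is strict whenever $P(\xi_1(W)<d_2(W))>0$.

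The main obstacle is Step 2. One must check that the Bobotas--Kourouklis lemma applies with the correct sign: the likelihood ratio $f_\lambda/f_1\propto e^{-v_2(\lambda-1)w}$ is decreasing in $v_2$, and this has to be paired with the single sign change of $L'(dv^k)v^k$. I would verify this directly by writing the difference of the two expectations as a covariance-type integral.
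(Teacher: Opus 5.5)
Your proof follows the paper's route: the paper only says to repeat the argument given before Theorem~\ref{th1}, and you do exactly that. You condition on $W$, use the Bobotas--Kourouklis lemma to get $d_2(w)\le d_\lambda(w)$, identify $d_2(w)=\beta_1(1+w)^k$ via $z=v_2(1+w)$, and truncate from below. Your readings $d_2(W)=\beta_1(1+W)^k$ and $P(\xi_1(W)<d_2(W))\neq0$ are the intended ones.

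One statement in Step~2 needs correcting. MLR plus a single sign change of $h(v)=L'(dv^k)v^k$ does not give $E_\lambda h\le E_1 h$ for every $d>0$, because $h$ is not monotone. For a counterexample, take $L_1$, $Y\sim Gamma(p_1+p_2-2,1)$ and $c=(1+\lambda w)^{-k}$. Then $E_\lambda h=2dc^2EY^{2k}-2c\,EY^k\to0$ as $\lambda\to\infty$, while $E_1h<0$ for every $d<d_2(w)$. So a covariance-type check of the inequality for all $d$ would fail.

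What the lemma, or your direct computation, actually gives is sign preservation. With $r=f_\lambda/f_1$ decreasing and $v_0=d^{-1/k}$ the sign-change point, you have $h(v)\{r(v)-r(v_0)\}\le0$, hence $E_\lambda h\le r(v_0)\,E_1h$. At $d=d_2(w)$ this yields $E_\lambda h\le0$, which is the only place you use Step~2. Stated in this form, your proof is complete. The paper's proof of Theorem~\ref{th} makes the same ``for all $d>0$'' overstatement.
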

		\noindent \textbf{Proof.} The proof can be carried out using the same arguments as in Theorem \ref{th1}.
		
		\begin{corollary}\label{coro3.2}
			The risk function of the estimator
			$\delta_{21}=\max \left\{d_{02}, \beta_1(1+W)^k\right\} S_2^k$ dominates estimator $\delta_{02}$ provided $\beta_1 < d_{02}$.
		\end{corollary}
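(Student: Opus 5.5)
Corollary \ref{coro3.2} follows from Theorem \ref{th2} by taking $\xi_1(W)\equiv d_{02}$, the constant multiplier of the BAEE $\delta_{02}=d_{02}S_2^k$. The bound in Theorem \ref{th2} is written $d_1(W)$ there, but by analogy with Section \ref{se2} it should be read as $d_2(W)=\beta_1(1+W)^k$. First I will confirm this form of $d_2$ by redoing the conditional argument for $\sigma_2$. Put $V_2=S_2/\sigma_2$ and $W=S_1/S_2$. Given $W=w$, $V_2$ has density proportional to $v_2^{p_1+p_2-3}e^{-v_2(1+w/\eta)}$ with $\eta=\sigma_1/\sigma_2\le 1$, so $1/\eta\ge 1$. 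For fixed $d$, the conditional risk is $E_\eta\{L(dV_2^k)\mid W=w\}$, and its derivative in $d$ is proportional to $E_\eta\{L'(dV_2^k)V_2^k\mid W=w\}$.

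Increasing $1/\eta$ makes $V_2$ stochastically smaller. By Lemma A.2 of \cite{bobotas2010estimation} (the ordering used in Theorem \ref{th1}, now with the inequality reversed), $E_\eta\{L'(dV_2^k)V_2^k\mid w\}\le E_1\{L'(dV_2^k)V_2^k\mid w\}$ for every $d>0$. Let $d_\eta(w)$ be the conditional minimizer. Since $L'$ is increasing, this inequality gives $d_\eta(w)\ge d_1(w)$.

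Next I identify $d_1(w)$. Substituting $z=v_2(1+w)$, the weighted density $v_2^k\cdot v_2^{p_1+p_2-3}e^{-v_2(1+w)}$ becomes a $Gamma(p_1+p_2+k-2,1)$ density in $z$. Hence $d_1(w)$ solves $E\,L'\bigl(d_1(w)(1+w)^{-k}Z_1^k\bigr)=0$, and comparing with the defining equation of $\beta_1$ gives $d_1(w)=\beta_1(1+w)^k=d_2(w)$.

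Now take $\xi_1\equiv d_{02}$, so $\xi_{01}(W)=\max\{d_{02},\beta_1(1+W)^k\}$ and $\delta_{\xi_{01}}=\delta_{21}$. On the event $\{d_{02}<\beta_1(1+W)^k\}$ we have $d_{02}<d_2(W)\le d_\eta(W)$. The conditional risk is convex in $d$ and minimized at $d_\eta(W)$, so moving the multiplier from $d_{02}$ up to $d_2(W)$ strictly lowers the conditional risk on this event. Off the event the two estimators coincide. Integrating over $W$ then gives $R(\delta_{21})\le R(\delta_{02})$ for all $\sigma_1\le\sigma_2$, with strict inequality whenever the event has positive probability.

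The main point to check is this positive-probability condition. The event is $\{(1+W)^k>d_{02}/\beta_1\}$, and $W$ has full support on $(0,\infty)$. For $k>0$, $(1+W)^k$ ranges over $(1,\infty)$, so the event has positive probability for any finite positive ratio $d_{02}/\beta_1$; in particular this holds under the hypothesis $\beta_1<d_{02}$. That hypothesis is what makes $\delta_{21}$ differ from the BAEE only through the truncation and gives a genuine, non-trivial improvement, so domination follows. The other place needing care is the direction of the stochastic ordering in the Bobotas–Kourouklis lemma, which is reversed relative to Section \ref{se2}; that reversal is why a $\max$ appears here instead of a $\min$.
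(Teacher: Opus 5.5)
Your proposal is correct and follows the paper's route. The corollary is Theorem \ref{th2} with $\xi_1\equiv d_{02}$, and, like the paper, you rerun the conditional Bobotas--Kourouklis argument of Theorem \ref{th1} with the stochastic ordering reversed (rate $1+w/\eta\ge 1+w$), identify $d_2(w)=\beta_1(1+w)^k$ via $z=v_2(1+w)$, and truncate the multiplier from below.

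One minor point: Lemma A.2 does not give $E_\eta\{L'(dV_2^k)V_2^k\mid w\}\le E_1\{L'(dV_2^k)V_2^k\mid w\}$ for every $d>0$ (under $L_1$ with $k=1$ it fails for small $d$). It only gives $E_\eta\{\cdot\}\le 0$ at $d=d_1(w)$, where $E_1\{\cdot\}=0$, and that single instance is all your argument actually uses.
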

		In the Theorem \ref{th2}, we have obtained an improved estimator $\sigma_2^k$ using the information contained only in $S_1$ and $S_2$. Now we use the information contained in $X_2$, for that we consider the class of estimator of the form 
		\begin{align*}
			\mathcal{C}_2=\left\{ \delta_{\xi_2}:\ \  \delta_{\xi_2}\left(X,S\right)=\xi_2(W_1)S_2^k\right\}
		\end{align*}
		where $W_1=\frac{X_2}{S_2}$ and $\xi_2(\cdot)$ is a measurable function.
		\begin{theorem}\label{th3}
			Let $Z_2$ be a random variable having gamma distribution $Gamma(p_2+k,1)$ and $\beta_{2}$ be the unique solution of 
			\begin{equation}
				E(L'(\beta_2Z_2^k))=0
			\end{equation}
			Consider $\xi_{02}(W_1)=\min\left\{ \xi_2(W_1), d_2(W_1)\right\}$, then the risk of the estimator $\delta_{\xi_{01}}=\xi_{01}(W_1)S_2^k$ is nowhere larger than that of the estimator $\delta_{\xi_2}$ provided $P(\xi_2(W_1)<d_2(W_1))\neq0$ holds true.
		\end{theorem}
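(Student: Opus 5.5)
The plan is to follow the conditional-risk argument of Theorem \ref{th}, now conditioning on $W_1$. One caveat first: for the truncation $\min\{\xi_2,d_2\}$ to change anything, the nontriviality condition must read $P(\xi_2(W_1)>d_2(W_1))\neq0$, with $d_2(w_1)=\beta_2(1+p_2w_1)^k$ for $w_1>0$. On $\{W_1\le 0\}$ I would leave $\xi_2$ unchanged, as in Theorem \ref{th}, and I will prove the statement in that form.

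Write $V_2=S_2/\sigma_2$ and $\eta_2=\mu_2/\sigma_2$. The risk of $\delta_{\xi_2}$ is $E\big[E\{L(\xi_2(W_1)V_2^k)\mid W_1\}\big]$, so it suffices to compare conditional risks pointwise in $w_1$. From the independence of $X_2\sim E(\mu_2,\sigma_2/p_2)$ and $S_2\sim Gamma(p_2-1,\sigma_2)$, the change of variables $X_2=w_1S_2$ gives, for $w_1>0$, the conditional density
\begin{align*}
f_{\eta_2}(v_2\mid w_1)\propto v_2^{p_2-1}e^{-v_2(1+p_2w_1)},\qquad v_2>\max\{0,\eta_2/w_1\}.
\end{align*}
For fixed $w_1>0$ this is a gamma density truncated from below at $\max\{0,\eta_2/w_1\}$. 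Increasing the truncation point increases $V_2$ stochastically, in fact in the likelihood-ratio order. The case $\eta_2\le0$ gives exactly the untruncated $Gamma(p_2,(1+p_2w_1)^{-1})$ law.

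Let $d_{\eta_2}(w_1)$ be the unique minimiser of $d\mapsto E_{\eta_2}\{L(dV_2^k)\mid W_1=w_1\}$. It is characterised by $E_{\eta_2}[L'(dV_2^k)V_2^k\mid w_1]=0$, and existence and uniqueness follow from the strict convexity in (A1) together with (A2). For fixed $d$, the map $v\mapsto L'(dv^k)v^k$ changes sign once, from negative to positive. Lemma A.2 of \cite{bobotas2010estimation} then gives
\begin{align*}
E_{\eta_2}\left[L'(dV_2^k)V_2^k\mid w_1\right]\ge E_{0}\left[L'(dV_2^k)V_2^k\mid w_1\right]\quad\text{for all } d>0.
\end{align*}
Take $d=d_0(w_1)$, the root under $\eta_2=0$. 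The right-hand side vanishes, and $E_{\eta_2}[L'(dV_2^k)V_2^k\mid w_1]$ is increasing in $d$ because $L'$ is increasing. Hence $d_{\eta_2}(w_1)\le d_0(w_1)$ for every $\eta_2$.

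To identify $d_0$, note that $E_0[L'(dV_2^k)V_2^k]=0$ involves the weight $v_2^{p_2-1+k}e^{-v_2(1+p_2w_1)}$. Substituting $z=v_2(1+p_2w_1)$ turns the equation into $E\,L'\big(d(1+p_2w_1)^{-k}Z_2^k\big)=0$ with $Z_2\sim Gamma(p_2+k,1)$. Comparing with the defining equation of $\beta_2$ gives $d_0(w_1)=\beta_2(1+p_2w_1)^k=d_2(w_1)$.

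Finally, convexity of $d\mapsto E_{\eta_2}\{L(dV_2^k)\mid w_1\}$ means this function is strictly increasing on $[d_{\eta_2}(w_1),\infty)$. Whenever $\xi_2(w_1)>d_2(w_1)$ we have $d_{\eta_2}\le d_2=\xi_{02}<\xi_2$, so the conditional risk strictly decreases; elsewhere the estimator is unchanged. Integrating over $W_1$ gives the dominance, with strict inequality when $P(\xi_2(W_1)>d_2(W_1))>0$.

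The main obstacle is the stochastic-ordering step, which must hold uniformly in the unknown $\eta_2$. Specifically, the lower truncation has to be in the direction that makes Lemma A.2 of \cite{bobotas2010estimation} applicable, which is why the argument is restricted to $w_1>0$. For $w_1<0$ the support becomes $v_2<\eta_2/w_1$ (with $\eta_2<0$), which is an upper truncation, and no uniform bound on $d_{\eta_2}$ is available there.
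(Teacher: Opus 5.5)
Your proposal is correct and is essentially the paper's own argument (the paper just says the proof mirrors Theorem \ref{th}): you condition on $W_1$, use the lower-truncated gamma law $v_2^{p_2-1}e^{-v_2(1+p_2w_1)}$ on $v_2>\max\{0,\eta_2/w_1\}$ together with Lemma A.2 of \cite{bobotas2010estimation} to get $d_{\eta_2}(w_1)\le\beta_2(1+p_2w_1)^k$, and your repairs of the garbled statement are the right ones (the condition $P(\xi_2(W_1)>d_2(W_1))\neq0$, the factor $p_2$ in $d_2$, and leaving $W_1\le0$ untouched). One point to tighten: the inequality $E_{\eta_2}[L'(dV_2^k)V_2^k\mid w_1]\ge E_0[L'(dV_2^k)V_2^k\mid w_1]$ is not true for every $d>0$ (it fails for quadratic loss with small $d$ and any $\eta_2>0$), but you only use it at $d=d_0(w_1)$, and there the sign-preservation form $E_{\eta_2}[\,\cdot\,]\ge 0$ does follow from the single sign change of $v\mapsto L'(dv^k)v^k$ under lower truncation.
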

		\noindent\textbf{Proof.} The proof can be carried out using the same arguments as in Theorem \ref{th}.
		\begin{corollary}\label{coro3.3}
			The risk function of the estimator
			$\delta_{22}=\min \left\{d_{02}, \beta_2(1+W_1)^k\right\} S_2^k$ dominates estimator $\delta_{02}$ provided $\beta_2 < d_{02}$.
		\end{corollary}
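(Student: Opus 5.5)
The plan is to obtain Corollary \ref{coro3.3} as a special case of Theorem \ref{th3}, applied to the BAEE viewed as a member of $\mathcal{C}_2$. The BAEE $\delta_{02}=d_{02}S_2^k$ lies in $\mathcal{C}_2$: it corresponds to the constant function $\xi_2(W_1)\equiv d_{02}$.

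With this choice, $\xi_{02}(W_1)=\min\{d_{02},d_2(W_1)\}$. Here I take the boundary function from Theorem \ref{th3} to be $d_2(W_1)=\beta_2(1+W_1)^k$, understood, as in Theorem \ref{th}, to be used only on $\{W_1>0\}$. It arises exactly as $d_{1,0}$ did there. One conditions on $W_1=w_1$ and applies Lemma A.2 of \cite{bobotas2010estimation} to replace $\eta_2=\mu_2/\sigma_2$ by $0$. The resulting gamma integral is then rescaled by $y=v_2(1+w_1)$, with any factor $p_2$ absorbed into the definition of $W_1$ consistently with the statement. Comparing with the defining equation $E(L'(\beta_2Z_2^k))=0$ identifies the conditional-risk minimizer at the boundary as $\beta_2(1+w_1)^k$. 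Consequently the true conditional minimizer $d_{\eta_2}(w_1)$ satisfies $d_{\eta_2}(w_1)\le \beta_2(1+w_1)^k$. Moreover, on the event $\{\beta_2(1+W_1)^k<d_{02}\}$, the strict convexity of $L$ makes the conditional risk strictly increasing to the right of its minimizer. Hence $\delta_{22}$ has strictly smaller conditional risk there and equal conditional risk elsewhere.

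The only thing left to verify is the hypothesis of Theorem \ref{th3}, namely that
\begin{align*}
P\left(\beta_2(1+W_1)^k<d_{02},\ W_1>0\right)=P\left(0<W_1<c\right)>0,\qquad c=(d_{02}/\beta_2)^{1/k}-1 .
\end{align*}
Since $\beta_2<d_{02}$ and $k>0$, we have $c>0$. I then need that $W_1=X_2/S_2$ puts positive mass on $(0,c)$ for every $(\mu_2,\sigma_2)$. This follows from independence of $X_2$ and $S_2$, with $X_2$ having support $(\mu_2,\infty)$ and $S_2$ having support $(0,\infty)$. If $\mu_2<0$, take $X_2\in(0,\epsilon)$ with $S_2$ arbitrary but not too small. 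If $\mu_2\ge0$, take $X_2\in(\mu_2,\mu_2+1)$ and $S_2>(\mu_2+1)/c$. Both events have positive probability.

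Integrating the pointwise conditional-risk inequality over $W_1$ then yields $R(\delta_{22})\le R(\delta_{02})$ for all parameter values, with strict inequality. The main obstacle I expect is the bookkeeping in the first step. The statement of Theorem \ref{th3} is loosely written: it never defines $d_2$, and it mixes the notation $\xi_{01}$ with $\xi_{02}$. So I would explicitly redo the identification of $d_2(w_1)$ via the transformation argument of Theorem \ref{th}, to make sure the truncation is from above, as the $\min$ requires, and that the gamma shape matches $Z_2\sim Gamma(p_2+k,1)$. After that, the positivity-of-probability check above is routine.
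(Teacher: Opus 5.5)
Your route is the paper's: specialize Theorem \ref{th3} to $\xi_2\equiv d_{02}$, then check that the truncation set has positive probability. However, the step where you identify the boundary function does not go through. It breaks exactly at the phrase ``any factor $p_2$ absorbed into the definition of $W_1$''.

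Since $X_2$ has rate $p_2/\sigma_2$, take $W_1=X_2/S_2$, $V_2=S_2/\sigma_2$ and $\eta_2=\mu_2/\sigma_2$. The joint density of $(W_1,V_2)$ is proportional to $v_2^{p_2-1}e^{-v_2(1+p_2w_1)}$ on $\{v_2>0,\ w_1v_2>\eta_2\}$. For $w_1>0$ and $\eta_2\le 0$ the constraint is vacuous, so the rescaling must be $y=v_2(1+p_2w_1)$. This rescaling is what produces the shape $p_2+k$ of $Z_2$. It gives $d_{\eta_2}(w_1)\le\beta_2(1+p_2w_1)^k$, with equality for every $\eta_2\le 0$ (compare the term $p_1T_1$ in Theorem \ref{th}).

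Nothing can be absorbed: $W_1$ is already defined, and $(1+p_2w_1)^k$ is not a constant multiple of $(1+w_1)^k$. Because $p_2\ge 2$, the event $\{\beta_2(1+W_1)^k<d_{02}\le\beta_2(1+p_2W_1)^k\}$ has positive probability. On it, for $\mu_2\le 0$, we have $\beta_2(1+W_1)^k<d_{02}\le d_{\eta_2}(W_1)$. So passing from $d_{02}$ to $\beta_2(1+W_1)^k$ moves away from the conditional minimizer and strictly increases conditional risk. Your claim of strictly smaller conditional risk on all of $\{\beta_2(1+W_1)^k<d_{02}\}$ is therefore false.

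This cannot be patched, because the corollary as literally written is false. Take $L_1$, $k=1$, $p_2=4$, $\mu_2=0$, $\sigma_2=1$. Then $d_{02}=1/4$, $\beta_2=1/5$, and $\delta_{22}=\min\{S_2/4,(S_2+X_2)/5\}$. Let $U=4X_2$, which is standard exponential and independent of $S_2\sim\mathrm{Gamma}(3,1)$. The two estimators differ exactly on $\{U<S_2\}$, and
\[
R(\delta_{22})-R(\delta_{02})=\tfrac{1}{400}\,E\bigl[(U-S_2)(9S_2+U-40);\,U<S_2\bigr]=\tfrac{7}{3200}>0 .
\]
By memorylessness of $X_2$, the difference equals $e^{p_2\mu_2/\sigma_2}\cdot\tfrac{7}{3200}$ for every $\mu_2\le 0$.

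What your argument (and the paper's ``same arguments as in Theorem \ref{th}'') actually establishes is a corrected estimator. It is $\min\{d_{02},\beta_2(1+p_2W_1)^k\}S_2^k=\min\{d_{02}S_2^k,\beta_2(S_2+p_2X_2)^k\}$ on $\{X_2>0\}$, and $d_{02}S_2^k$ otherwise. In the same example its risk difference is about $-0.009$. For that version your positivity check goes through with the set $\{0<W_1<c/p_2\}$, where $c=(d_{02}/\beta_2)^{1/k}-1$. You were right to impose the restriction to $\{W_1>0\}$, which the statement omits; without it, $(1+W_1)^k$ is negative or undefined when $W_1<-1$.
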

		\begin{example}
			\begin{enumerate}
				\item[(i)] For quadratic loss function $L_1(t)$, we have $\beta_1= \frac{\Gamma(p_1+p_2+k-2)}{\Gamma(p_1+p_2+2k-2)}$, $\beta_2= \frac{\Gamma(p_2+k)}{\Gamma(p_2+2k)}$. The improved estimators are obtained as follows
				\begin{align*}
					&\delta_{21}^1(X,S)=\max\left\{\frac{\Gamma(p_2+k-1)}{\Gamma(p_2+2k-1)}, \beta_1(1+W)^k\right\}S_2^k\\
					&\delta_{22}^1(X,S)=\begin{cases}\min \left\{\frac{\Gamma(p_2+k-1)}{\Gamma(p_2+2k-1)}, \beta_2\left(1+W_1\right)^k\right\} S_2^k, & W_1>0 \\ 
						\frac{\Gamma(p_2+k-1)}{\Gamma(p_2+2k-1)} S_2^k, & \text { otherwise }\end{cases}\\
				\end{align*}
				\item[(ii)] For entropy loss function $L_2(t)$, we have $\beta_1= \frac{\Gamma(p_1+p_2-2)}{\Gamma(p_1+p_2+k-2)}$, $\beta_2= \frac{\Gamma(p_2)}{\Gamma(p_2+k)}$. The improved estimators are obtained as follows
				\begin{align*}
					&\delta_{21}^2(X,S)=\max\left\{\frac{\Gamma(p_2-1)}{\Gamma(p_2+k-1)}, \beta_1(1+W)^k\right\}S_2^k\\
					&\delta_{22}^2(X,S)=\begin{cases}\min \left\{\frac{\Gamma(p_2-1)}{\Gamma(p_2+k-1)}, \beta_2\left(1+W_1\right)^k\right\} S_2^k, & W_1>0 \\ 
						\frac{\Gamma(p_2-1)}{\Gamma(p_2+k-1)} S_2^k, & \text { otherwise }\end{cases}\\
				\end{align*}
				\item[(iii)] For symmetric loss function $L_3(t)$, we have $\beta_1= \sqrt{\frac{\Gamma(p_1+p_2-k-2)}{\Gamma(p_1+p_2+k-2)}}$, $\beta_2= \sqrt{\frac{\Gamma(p_2-k)}{\Gamma(p_2+k)}}$. The improved estimators are obtained as follows
				\begin{align*}
					&\delta_{21}^3(X,S)=\max\left\{\sqrt{\frac{\Gamma(p_2-k-1)}{\Gamma(p_2+k-1)}}, \beta_1(1+W)^k\right\}S_2^k\\
					&\delta_{22}^3(X,S)=\begin{cases}\min \left\{\sqrt{\frac{\Gamma(p_2-k-1)}{\Gamma(p_2+k-1)}}, \beta_2\left(1+W_1\right)^k\right\} S_2^k, & W_1>0 \\ 
						\sqrt{\frac{\Gamma(p_2-k-1)}{\Gamma(p_2+k-1)}} S_2^k, & \text { otherwise }\end{cases}\\
				\end{align*}
				\item[(iv)] For linex loss function $L_4(t)$, the improved estimators are obtained as follows
				\begin{align*}
					&\delta_{21}^4(X,S)=\min\left\{d_{02}, \beta_1(1+W)^k\right\}S_2^k\\
					&\delta_{22}^4(X,S)=\begin{cases}\min \left\{d_{02}, \beta_2\left(1+W_1\right)^k\right\} S_2^k, & W_1>0 \\ 
						d_{02} S_2^k, & \text { otherwise }\end{cases}\\
				\end{align*}
				where $\beta_1$ and $\beta_2$ are solution of the following equations respectively
				\begin{align*}
					&\int_{0}^{\infty} z_1^{p_1+p_2+k-3} e^{-z_1+\alpha(\beta_1z_1^k-1)}dz_1=1\\
					&\int_{0}^{\infty} z_2^{p_2+k-1} e^{-z_2+\alpha(\beta_2z_2^k-1)}dz_2=1\\
				\end{align*}
			\end{enumerate}
		\end{example}
		In the following theorem, we derive a class of improved estimators using the IERD approach \cite{kubokawa1994unified}.
		\begin{theorem}\label{t3.3}
			Let the function $\xi_1$ satisfies the following conditions.
			\begin{enumerate}
				\item[(i)] $\xi_1(w)$ is increasing function in $w$ and $\lim\limits_{w\rightarrow 0}\xi_1(w)=d_{02}$.
				\item[(ii)] $\int_{0}^{\infty}\int_{v_2w}^{\infty}L'(\xi_1(w)v_2^k)v_2^{k}\nu_1(y)\nu_2(v_2)dydv_2\leq0$.
			\end{enumerate}
			where $\nu_i$ is pdf of a $\mbox{Gamma}(p_i-1)$ distribution for $i=1,2$. Then the risk of $\delta_{\xi_1}$ in $\mathcal{C}_1$ is uniformly smaller than the estimator $\delta_{02}$ under $L(t)$.
		\end{theorem}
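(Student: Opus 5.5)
We follow the integral expression of risk difference (IERD) method of \cite{kubokawa1994unified}, in the same way as for Theorem \ref{th3.3} but with the monotonicity reversed, because $\sigma_2$ is the larger parameter. Put $V_1=S_1/\sigma_1$ and $V_2=S_2/\sigma_2$, which are independent with $V_i\sim Gamma(p_i-1,1)$ and densities $\nu_i$. Write $\eta=\sigma_1/\sigma_2\le 1$. Then
\[
W=\frac{S_1}{S_2}=\eta\,\frac{V_1}{V_2}.
\]
Since $\delta_{\xi_1}/\sigma_2^k=\xi_1(W)V_2^k$ and $\delta_{02}/\sigma_2^k=d_{02}V_2^k$, both risks depend on the parameters only through $\eta$.

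\textbf{Step 1 (IERD identity).} By condition (i), $\xi_1(w)\to d_{02}$ as $w\to 0$, which gives
\[
L\bigl(d_{02}V_2^k\bigr)-L\bigl(\xi_1(W)V_2^k\bigr)=-\int_{0}^{W}\frac{d}{dw}L\bigl(\xi_1(w)V_2^k\bigr)\,dw=-\int_0^\infty I(w<W)\,\xi_1'(w)\,L'\bigl(\xi_1(w)V_2^k\bigr)V_2^k\,dw .
\]
We take expectations and use Fubini; the interchange is justified by (A2). The event $\{w<W\}$ equals $\{V_1>wV_2/\eta\}$, so
\[
R(\delta_{02})-R(\delta_{\xi_1})=-\int_0^\infty \xi_1'(w)\,J_\eta(w)\,dw,\qquad
J_\eta(w)=\int_0^\infty\int_{v_2w/\eta}^{\infty}L'\bigl(\xi_1(w)v_2^k\bigr)v_2^k\,\nu_1(y)\,\nu_2(v_2)\,dy\,dv_2 .
\]
Because $\xi_1'\ge 0$, the theorem follows once we show $J_\eta(w)\le 0$ for every $w>0$ and every $\eta\in(0,1]$. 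Condition (ii) is exactly the statement $J_1(w)\le 0$.

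\textbf{Step 2 (monotonicity in $\eta$).} Write $J_\eta(w)=\int_0^\infty L'(\xi_1(w)v_2^k)\,v_2^k\,\bar F_1(v_2w/\eta)\,\nu_2(v_2)\,dv_2$, where $\bar F_1$ is the survival function of $V_1$. Divide by the positive constant $\int_0^\infty v_2^k\bar F_1(v_2w/\eta)\nu_2(v_2)\,dv_2$. Then $J_\eta(w)\le 0$ is equivalent to $E_{\eta}^{*}\bigl[L'(\xi_1(w)V_2^k)\bigr]\le 0$ under the tilted density
\[
h_\eta(v_2)\propto v_2^k\,\bar F_1(v_2w/\eta)\,\nu_2(v_2).
\]
For $\eta<1$, the ratio $h_\eta(v_2)/h_1(v_2)\propto \bar F_1(v_2w/\eta)/\bar F_1(v_2w)$ should be nonincreasing in $v_2$. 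This holds because the gamma distribution has an increasing failure rate when $p_1-1\ge 1$, so $\log\bar F_1$ is concave and the ratio $\bar F_1(cx)/\bar F_1(x)$ decreases in $x$ for $c=1/\eta>1$. This is the same monotone likelihood ratio device used through Lemma A.2 of \cite{bobotas2010estimation} in Theorem \ref{th}. Since $L'$ is nondecreasing and $v_2\mapsto L'(\xi_1(w)v_2^k)$ is nondecreasing for $k>0$, it follows that $E_\eta^*[L'(\xi_1(w)V_2^k)]\le E_1^*[L'(\xi_1(w)V_2^k)]\le 0$. Hence $J_\eta(w)\le 0$.

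\textbf{Step 3 (conclusion).} Combining the two steps gives $R(\delta_{02})-R(\delta_{\xi_1})\ge 0$ for all $\eta\le 1$. The inequality is strict when $\xi_1$ is nonconstant, which gives uniform improvement.

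The main obstacle is Step 2. The required monotone likelihood ratio must go in the correct direction for $\eta\le 1$, and the IFR property of $\bar F_1$ needs $p_1\ge 2$. If that property is delicate, we would instead verify monotonicity of the ratio directly from the representation $\bar F_1(x)=\int_1^\infty x\,\nu_1(xs)\,ds$, which gives a total-positivity argument in $(x,1/\eta)$.
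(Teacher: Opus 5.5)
Your proof is correct and follows essentially the paper's route. The paper only refers to the IERD argument of Theorem 4.3 in \cite{kubokawa1994double}, which is exactly your integral representation of $R(\delta_{02})-R(\delta_{\xi_1})$ followed by the likelihood-ratio comparison that transfers the sign of the $\eta=1$ integral in (ii) to $J_\eta(w)$ for every $\eta<1$. Your IFR caveat is harmless, since $p_1\ge 2$ is needed anyway for $S_1$ to be nondegenerate.

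The one overstatement is in Step 3. Strictness does follow for $\eta<1$ when $L$ is strictly convex. At $\eta=1$, however, the risk difference is $-\int_0^\infty \xi_1'(w)J_1(w)\,dw$, which is zero whenever (ii) holds with equality on the support of $\xi_1'$ (e.g.\ for the Brewster--Zidek boundary $\xi_{01}$). So nonconstancy of $\xi_1$ alone does not give strict improvement there.
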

		\noindent \textbf{Proof:} Proof of this theorem is similar to the Theorem 4.3 of \cite{kubokawa1994double}	
		In the following, we have obtained  improved estimators for $\sigma_2^k$ under three special loss functions by applying Theorem \ref{t3.3}. 
		\begin{corollary}
			Let us assume that the function $\xi_1(w)$  satisfies the subsequent criterion:
			\begin{enumerate}
				\item[(i)]$\xi_1(w)$ is increasing function in $w$ and $\lim\limits_{w\rightarrow0}\xi_1(w)=\frac{\Gamma\left(p_2+k-1\right)}{\Gamma\left(p_2+2k-1\right)} $
				\item[(ii)]$\xi_1(w)\leq\xi_{01}^1(w)$
			\end{enumerate}
			where
			\begin{align*}
				\xi_{01}^1(w)=\frac{\int_{0}^{\infty}\int_{v_2w}^{\infty}x^{p_1-2}e^{-x}v_2^{p_2+k-2}e^{-v_2}dxdv_2}{\int_{0}^{\infty}\int_{v_2w}^{\infty}x^{p_1-2}e^{-x}v_2^{p_2+2k-2}e^{-v_2}dxdv_2}
			\end{align*}
			Then under the loss function $L_1(t)$, the risk of the estimator  $\delta_{\xi_1}$ is nowhere larger than that of $\delta^1_{02}$.
		\end{corollary}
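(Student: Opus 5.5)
This corollary should follow by specializing Theorem \ref{t3.3} to $L_1(t)=(t-1)^2$, so the plan is to check its two hypotheses. Condition (i) of Theorem \ref{t3.3} is exactly condition (i) of the corollary, because under $L_1$ the BAEE constant is $d_{02}=\Gamma(p_2+k-1)/\Gamma(p_2+2k-1)$ (Example \ref{1.1}(i) with $i=2$). The remaining work is to show that condition (ii) of the corollary, $\xi_1(w)\le\xi_{01}^1(w)$, implies the integral inequality (ii) of Theorem \ref{t3.3}.

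For $L_1$ we have $L_1'(t)=2(t-1)$. The integrand in (ii) of Theorem \ref{t3.3} becomes
\begin{align*}
2\left(\xi_1(w)v_2^k-1\right)v_2^k\,\nu_1(y)\nu_2(v_2),
\end{align*}
where $\nu_1(y)\propto y^{p_1-2}e^{-y}$ and $\nu_2(v_2)\propto v_2^{p_2-2}e^{-v_2}$. Since $\xi_1(w)$ does not depend on the integration variables, the double integral is affine in $\xi_1(w)$:
\begin{align*}
2\xi_1(w)\int_0^\infty\!\!\int_{v_2w}^\infty y^{p_1-2}e^{-y}v_2^{p_2+2k-2}e^{-v_2}\,dy\,dv_2-2\int_0^\infty\!\!\int_{v_2w}^\infty y^{p_1-2}e^{-y}v_2^{p_2+k-2}e^{-v_2}\,dy\,dv_2,
\end{align*}
up to the common positive normalizing constant of $\nu_1\nu_2$. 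The coefficient of $\xi_1(w)$ is strictly positive, so the expression is $\le 0$ if and only if $\xi_1(w)$ is at most the ratio of the two integrals. That ratio is exactly $\xi_{01}^1(w)$. Hence $\xi_1(w)\le\xi_{01}^1(w)$ gives (ii) of Theorem \ref{t3.3} for every $w>0$, and the theorem yields that $\delta_{\xi_1}$ is nowhere worse than $\delta_{02}^1$.

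The one point needing care is finiteness and positivity of the integrals, which is needed so that $\xi_{01}^1$ is well defined and the division preserves the direction of the inequality. For $p_1>1$, $p_2+k-1>0$ and $k>0$, the inner integral is an incomplete gamma tail bounded by $\Gamma(p_1-1)$, and the outer integrals are finite gamma integrals; both are strictly positive. This is routine and is covered by assumption (A2).

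As a consistency check, I would verify $\lim_{w\to0}\xi_{01}^1(w)=\Gamma(p_2+k-1)/\Gamma(p_2+2k-1)=d_{02}$, since at $w=0$ the inner integral is the constant $\Gamma(p_1-1)$. This shows the boundary function $\xi_{01}^1$ itself satisfies the limit requirement in (i), so the class of admissible $\xi_1$ is nonempty. I would also check that $\xi_{01}^1$ is monotone in $w$ via a standard monotone likelihood ratio argument, though that is not needed for the corollary as stated.
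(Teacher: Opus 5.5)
Your proposal is correct and takes the same route as the paper, which obtains this corollary simply by applying Theorem \ref{t3.3} under $L_1$. With $L_1'(t)=2(t-1)$, condition (ii) becomes affine in $\xi_1(w)$ and is equivalent to $\xi_1(w)\le\xi_{01}^1(w)$, while condition (i) is matched because $d_{02}=\Gamma(p_2+k-1)/\Gamma(p_2+2k-1)$ under $L_1$.
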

		\begin{corollary}
			Let us assume that the function $\xi_1(w)$ satisfies the following conditions 
			\begin{enumerate}
				\item[(i)] $\xi_1(w)$ is increasing function in $w$ and $\lim\limits_{w\rightarrow0}\xi_1(w)=\frac{\Gamma\left(p_2-1\right)}{\Gamma\left(p_2+k-1\right)}$
				\item[(ii)] $\xi_1(w)\leq\xi_{01}^2(w)$
			\end{enumerate}
			where
			\begin{align*}
				\xi_{01}^2(w)=\frac{\int_{0}^{\infty}\int_{v_2w}^{\infty}x^{p_1-2}e^{-x}v_2^{p_2-2}e^{-v_2}dxdv_2}{\int_{0}^{\infty}\int_{v_2w}^{\infty}x^{p_1-2}e^{-x}v_2^{p_2+k-2}e^{-v_2}dxdv_2}
			\end{align*}
			The risk of the estimator  $\delta_{\xi_1}$ is uniformly smaller than that of $\delta^2_{02}$ with respect to $L_2(t)$.
		\end{corollary}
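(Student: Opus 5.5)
The plan is to derive this corollary from Theorem \ref{t3.3}, specialised to the entropy loss $L_2(t)=t-\ln t-1$, for which $L_2'(t)=1-1/t$. Condition (i) of the corollary is exactly condition (i) of Theorem \ref{t3.3}, because by Example \ref{1.1}(ii) the BAEE constant is $d_{02}=\Gamma(p_2-1)/\Gamma(p_2+k-1)$. So the work reduces to showing that condition (ii) of Theorem \ref{t3.3} is equivalent to $\xi_1(w)\le\xi_{01}^2(w)$.

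With $L=L_2$ we have
\[
L_2'\bigl(\xi_1(w)v_2^k\bigr)v_2^k=v_2^k-\frac{1}{\xi_1(w)}.
\]
Insert this into the double integral of condition (ii). Since $\xi_1(w)$ does not depend on the integration variables $y,v_2$, it factors out of the second term, and condition (ii) becomes
\[
\int_{0}^{\infty}\int_{v_2w}^{\infty}v_2^{k}\nu_1(y)\nu_2(v_2)\,dy\,dv_2-\frac{1}{\xi_1(w)}\int_{0}^{\infty}\int_{v_2w}^{\infty}\nu_1(y)\nu_2(v_2)\,dy\,dv_2\le 0 .
\]
Both integrals are finite and strictly positive; the first is finite because $v_2^k\nu_2(v_2)$ is integrable, as assumption (A2) requires.

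Rearranging gives
\[
\xi_1(w)\le\frac{\int_0^\infty\int_{v_2w}^\infty \nu_1(y)\nu_2(v_2)\,dy\,dv_2}{\int_0^\infty\int_{v_2w}^\infty v_2^k\nu_1(y)\nu_2(v_2)\,dy\,dv_2}.
\]
Now write $\nu_i(x)\propto x^{p_i-2}e^{-x}$, the $\mathrm{Gamma}(p_i-1,1)$ density. The normalising constants cancel between numerator and denominator, and the ratio is exactly $\xi_{01}^2(w)$. Hence condition (ii) of the corollary implies condition (ii) of Theorem \ref{t3.3}, and that theorem gives the uniform risk improvement over $\delta_{02}^2$.

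The only point needing care is the direction of the inequality. It is preserved because we multiply by $\xi_1(w)>0$ and divide by a positive integral. One should also check that the limit in (i) is compatible: as $w\to0$ the inner integrals extend over $(0,\infty)$, and $\xi_{01}^2(w)\to\Gamma(p_2-1)/\Gamma(p_2+k-1)=d_{02}$. This confirms that the boundary function $\xi_{01}^2$ is itself admissible within the class of the corollary.
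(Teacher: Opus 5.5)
Your proposal is correct and follows the route the paper intends, since the corollary is stated as a direct application of Theorem \ref{t3.3}. With $L_2'(\xi_1(w)v_2^k)v_2^k=v_2^k-1/\xi_1(w)$, condition (ii) of the theorem rearranges to $\xi_1(w)\le\xi_{01}^2(w)$ once the Gamma normalizing constants cancel, and condition (i) matches because $d_{02}=\Gamma(p_2-1)/\Gamma(p_2+k-1)$.

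The only overstatement is your closing remark that the limit check ``confirms'' $\xi_{01}^2$ lies in the class. Membership also needs $\xi_{01}^2(w)=1/E[V_2^k\mid V_1>wV_2]$ to be increasing in $w$. This is true, because the conditional law of $V_2$ decreases in likelihood ratio as $w$ grows, but you did not check it; it is not needed for the corollary itself.
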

		\begin{corollary}
			Let us assume that the following conditions holds true
			\begin{enumerate}
				\item[(i)] $\xi_1(w)$ is increasing function in $w$ and $\lim\limits_{w\rightarrow0}\xi_1(w)=\sqrt{\frac{\Gamma\left(p_2-k-1\right)}{\Gamma\left(p_2+k-1\right)}}$.
				\item[(ii)] $\xi_1(w)\leq\xi_{01}^3(w)$
			\end{enumerate}
			where
			\begin{align*}
				\xi_{01}^3(w)=\sqrt{\frac{\int_{0}^{\infty}\int_{v_2w}^{\infty}x^{p_1-2}e^{-x}v_2^{p_2-k-2}e^{-v_2}dxdv_2}{\int_{0}^{\infty}\int_{v_2w}^{\infty}x^{p_1-2}e^{-x}v_2^{p_2+k-2}e^{-v_2}dxdv_2}}
			\end{align*}	
			The risk of the estimator  $\delta_{\xi_1}$ is nowhere larger  than that of $\delta^3_{02}$ with respect to the loss function $L_3(t)$.
		\end{corollary}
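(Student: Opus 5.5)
The plan is to obtain this corollary as a direct specialization of Theorem \ref{t3.3} to the symmetric loss $L_3(t)=t+1/t-2$. Two things must be checked: that condition (i) of Theorem \ref{t3.3} is exactly condition (i) here, and that condition (ii) of Theorem \ref{t3.3} is equivalent to $\xi_1(w)\le \xi_{01}^3(w)$. Throughout I assume $0<k<p_2-1$, so that all the gamma integrals involving $v_2^{p_2-k-2}$ are finite; this is the same requirement already implicit in the BAEE $\delta^3_{02}$ of Example \ref{1.1}(iii).

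\textbf{Step 1 (condition (i)).} First I will verify that $d_{02}$ under $L_3$ equals $\sqrt{\Gamma(p_2-k-1)/\Gamma(p_2+k-1)}$. Since $L_3'(t)=1-t^{-2}$, the defining equation $E\bigl(L_3'(dV_2^k)V_2^k\bigr)=0$ with $V_2\sim Gamma(p_2-1,1)$ becomes
\begin{align*}
E(V_2^k)=d^{-2}E(V_2^{-k}).
\end{align*}
This gives $d^2=\Gamma(p_2-k-1)/\Gamma(p_2+k-1)$. Hence hypothesis (i) of the corollary is hypothesis (i) of Theorem \ref{t3.3}.

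\textbf{Step 2 (condition (ii)).} Next I substitute $L_3'$ into condition (ii) of Theorem \ref{t3.3}. This gives
\begin{align*}
\int_0^\infty\!\!\int_{v_2w}^\infty \Bigl(v_2^k-\xi_1(w)^{-2}v_2^{-k}\Bigr)\nu_1(y)\nu_2(v_2)\,dy\,dv_2\le 0 .
\end{align*}
Because $\xi_1(w)>0$ does not depend on the integration variables, this is equivalent to
\begin{align*}
\xi_1(w)^2\le \frac{\int_0^\infty\int_{v_2w}^\infty v_2^{-k}\nu_1(y)\nu_2(v_2)\,dy\,dv_2}{\int_0^\infty\int_{v_2w}^\infty v_2^{k}\nu_1(y)\nu_2(v_2)\,dy\,dv_2}.
\end{align*}
Now I insert $\nu_1(y)\propto y^{p_1-2}e^{-y}$ and $\nu_2(v_2)\propto v_2^{p_2-2}e^{-v_2}$. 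The normalizing constants cancel between numerator and denominator, and the integrands become $x^{p_1-2}e^{-x}v_2^{p_2-k-2}e^{-v_2}$ and $x^{p_1-2}e^{-x}v_2^{p_2+k-2}e^{-v_2}$ respectively. Taking square roots, the right-hand side is exactly $\xi_{01}^3(w)$. So hypothesis (ii) of the corollary is equivalent to condition (ii) of Theorem \ref{t3.3}.

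\textbf{Step 3 (conclusion and consistency check).} With both hypotheses of Theorem \ref{t3.3} verified, it yields that the risk of $\delta_{\xi_1}$ is nowhere larger than that of $\delta^3_{02}$. As a sanity check on the boundary, letting $w\to0$ makes the inner integrals tend to $\Gamma(p_1-1)$. The ratio then becomes $\Gamma(p_2-k-1)/\Gamma(p_2+k-1)$, so $\xi_{01}^3(0^+)=d_{02}$, consistent with condition (i). This also shows the corollary is not vacuous: the boundary function $\xi_{01}^3$ itself, if increasing, is the Brewster--Zidek-type member of the class.

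\textbf{Main obstacle.} There is no real difficulty here; the substance lies in Theorem \ref{t3.3}. The only points needing care are the finiteness conditions ($k<p_2-1$) and the sign bookkeeping in the direction of the inequality. The inequality must turn into an upper bound $\xi_1\le\xi_{01}^3$, in contrast with the lower bound obtained for $\sigma_1^k$.
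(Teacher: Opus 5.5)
Your proposal is correct and follows the paper's route. The paper obtains this corollary simply by specializing Theorem \ref{t3.3} to $L_3(t)=t+1/t-2$; your derivation of $d_{02}$ from $L_3'(t)=1-t^{-2}$ and your rearrangement of condition (ii) into $\xi_1(w)\le\xi_{01}^3(w)$, under the finiteness requirement $k<p_2-1$, supply exactly the details the paper leaves unstated.
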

		\begin{remark}
			For the linex loss function $L_4(t)$ the closed form are not be to find.
		\end{remark}
		\begin{remark}\label{bz2}
			From the above corollaries, we derive a class of improved estimators under the loss functions $L_1(t)$, $L_2(t)$ and $L_3(t)$. The corresponding boundary estimators are $\delta_{\xi_{01}^1}=\xi_{01}^1S_2^k$, $\delta_{\xi_{01}^2}=\xi_{01}^2S_2^k$ and $\delta_{\xi_{01}^3}=\xi_{01}^3S_2^k$. These boundary estimators are of the \cite{brewster1974improving}-type.
		\end{remark}	
		\begin{remark}
			In Remark \ref{bz2}, the \cite{brewster1974improving}-type estimator $\delta_{\xi_{01}}$ is a generalized Bayes estimator for $\sigma_2^k$ under the non-informative prior 
			\begin{align*}
				\Pi(\mu_1,\mu_2,\sigma_1,\sigma_2)=\frac{1}{\sigma_1\sigma_2},~~ \mu_1<X_1,~ \mu_2<X_2,~ \text{and}~ 0<\sigma_1\leq\sigma_2.
			\end{align*}
		\end{remark}
		\subsection{Double shrinkage improved estimators}
		In this section we will propose a double shrinkage estimator similar to \cite{iliopoulos1999improving}. We will find the double shrinkage estimator by jointly using the aforementioned improved estimators. Consider the following estimators $\delta_{2S1}$, $\delta_{2S2}$ which is derived in the corollary \ref{coro3.2} and \ref{coro3.3} respectively. Let $\xi_1(w)$, $\xi_2(w_1)$ and $L(t)$ are almost every where differentiable along with $\xi_1(w)$ is non-increasing and $\xi_2(w_1)$ is non-decreasing. Also satisfies the conditions $\lim\limits_{w \to \infty}\xi_1(w)=\lim\limits_{w_1\to \infty}\xi_2(w_1)=c_{02}$. Then we have the following results.
		\begin{theorem} The estimators $\delta_{D}=\left\{\xi_1(W)+\xi_2(W_1)-c_{02}\right\}S_2^{\frac{k}{2}}$ dominates both $\delta_{2S1}$ and $\delta_{2S2}$ provided $L'(t)$ is non decreasing.
		\end{theorem}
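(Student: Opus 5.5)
We follow the double-shrinkage argument of \cite{iliopoulos1999improving}. The statement has typographical slips: the power should be $S_2^k$, and $c_{02}$ should be $d_{02}$. We therefore take $\delta_D=\{\xi_1(W)+\xi_2(W_1)-d_{02}\}S_2^k$. We compare it with $\delta_{\xi_2}=\xi_2(W_1)S_2^k$, and symmetrically with $\delta_{\xi_1}=\xi_1(W)S_2^k$. Throughout, $\xi_1,\xi_2$ are the shrinkage functions from Corollaries \ref{coro3.2} and \ref{coro3.3}, so each individual estimator already improves on $\delta_{02}$ through the IERD (Theorem \ref{t3.3} type) inequalities.

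\textbf{Step 1 (representation).} Write $\psi(w,w_1)=\xi_1(w)+\xi_2(w_1)-d_{02}$. Since $\xi_1(w)\to d_{02}$ as $w\to\infty$, we have
\begin{align*}
L\big(\psi(W,W_1)V_2^k\big)-L\big(\xi_2(W_1)V_2^k\big)=-\int_W^\infty \xi_1'(s)\,L'\big(\{\xi_1(s)+\xi_2(W_1)-d_{02}\}V_2^k\big)V_2^k\,ds .
\end{align*}
Taking expectations, the risk difference $R(\delta_D)-R(\delta_{\xi_2})$ equals
\begin{align*}
-\int_0^\infty \xi_1'(s)\,E\Big[L'\big(\{\xi_1(s)+\xi_2(W_1)-d_{02}\}V_2^k\big)V_2^k\,I(W<s)\Big]ds .
\end{align*}
Because $\xi_1$ is non-increasing, $-\xi_1'\ge 0$. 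It therefore suffices to show that the inner expectation is $\le 0$ for every $s$.

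\textbf{Step 2 (monotone comparison).} The key observation is that $\xi_2(W_1)-d_{02}\le 0$, because $\xi_2$ is non-decreasing with limit $d_{02}$ at infinity. Combined with $L'$ non-decreasing, this gives
\begin{align*}
L'\big(\{\xi_1(s)+\xi_2(W_1)-d_{02}\}V_2^k\big)\le L'\big(\xi_1(s)V_2^k\big).
\end{align*}
Hence the inner expectation is bounded by $E[L'(\xi_1(s)V_2^k)V_2^k I(W<s)]$. This is exactly the quantity whose nonpositivity is the IERD condition (ii) of Theorem \ref{t3.3}, written with $W<s\iff S_1<sS_2$. We need it at the least favorable parameter ($\eta=1$, $\mu_2$ at its boundary) and then for all parameters. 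For the general parameter we use the same Lemma A.2 of \cite{bobotas2010estimation} stochastic-ordering step as in Theorem \ref{th}. That step requires the conditional distribution of $V_2$ given $W<s$ (and given $W_1$) to move in the correct direction as $\eta$ and $\mu_2/\sigma_2$ vary. In the sub-case where $W_1$ enters, we apply the same trick after conditioning on $W_1$ and using the independence of $X_2$ from $(S_1,S_2)$. The symmetric comparison with $\delta_{\xi_1}$ is obtained by expanding instead in $w_1$ around $\infty$, using $\xi_1(W)-d_{02}\ge 0$ together with $\xi_2'\ge 0$. Note that the inequality direction there must be matched to the sign of the IERD condition for $\xi_2$.

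\textbf{Main obstacle.} The difficult step is Step 2's reduction of a joint expectation involving both $W$ and $W_1$ to the one-dimensional IERD inequalities. This needs careful bookkeeping of signs, because the two shrinkages push in opposite directions: $\xi_1$ is non-increasing and enters through a max, while $\xi_2$ is non-decreasing and enters through a min. We would first try conditioning on $W_1$ so that $\xi_2(W_1)-d_{02}$ acts as a fixed nonpositive shift, then invoke monotonicity of $L'$. If the signs fail, we would fall back on Iliopoulos–Kourouklis' device of proving the inequality separately on $\{W_1\le 0\}$ and $\{W_1>0\}$.
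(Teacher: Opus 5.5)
Your proposal is correct and is the Iliopoulos--Kourouklis double-shrinkage argument that the paper only cites. You expand the risk difference in one shrinkage variable and use $\xi_2(W_1)-d_{02}\le 0$ (resp.\ $\xi_1(W)-d_{02}\ge 0$) with monotonicity of $L'$ to bound the integrand by the single-shrinkage IERD integrand; its sign for every $s$ and every parameter is what makes $\delta_{21}$ (resp.\ $\delta_{22}$) dominate $\delta_{02}$, and the signs match in both comparisons.

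Two clean-ups. After that bound neither $W_1$ nor $\mu_2$ appears, so your extra conditioning on $W_1$ is unnecessary. For the estimators as actually written, $\xi_1=\max\{d_{02},\beta_1(1+w)^k\}$ is non-decreasing with $\xi_1(0)=d_{02}$, so that expansion runs over $(0,W)$ rather than $(W,\infty)$, and $\xi_2$ is monotone only on $(0,\infty)$, so the $w_1$-expansion is done on $\{W_1>0\}$, since $\delta_D=\delta_{21}$ on $\{W_1\le 0\}$.
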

		\noindent \textbf{Proof:} The theorem can be proof by using the similar argument as used in \cite{iliopoulos1999improving}. 	
		\section{On improved estimation using generalized Pitman closeness}\label{se4}
		In this section we derive the estimation of $\tau$ under the generalized Pitman closeness criterion. A concise discussion of the Pitman closeness criterion can be found in \cite{garg2024unified}. The Pitman closeness criterion was first proposed by \cite{pitman1937closest}. The generalized Pitman closeness (GPC) criterion (see \cite{nayak1990estimation} and \cite{kubokawa1991equivariant}) under a location invariant loss function $W(\cdot,\cdot)$ is defined as follows. 
		\begin{defn}
			Let $X$ be a random variable having a distribution depends on an unknown parameter $\theta \in \Theta$. Let $\delta_1$ and $\delta_2$ be two estimators of a real-valued estimand $\xi(\theta)$. Also, let $G(\delta,\xi(\theta))$ be a location invariant loss function for estimating $\xi(\theta)$. Then, the GPC of $\delta_1$ relative to $\delta_2$ is defined by
			\begin{align*}
				\mbox{GPC}(\delta_1,\delta_2;\theta)= P_{\theta}\big[G(\theta,\delta_1)<G(\theta,\delta_2)\big] + \frac{1}{2}P_{\theta}\big[G(\theta,\delta_1)=G(\theta,\delta_2)\big], \ \ \theta\in\Theta.
			\end{align*}
			The estimator $\delta_1$ is said to be closer to $\xi(\theta)$ than the estimator $\delta_2$, under the GPC criterion, if $\mbox{GPC}(\delta_1,\delta_2;\theta)\geq \frac{1}{2}$ $\forall \  \theta \in \Theta$, and strict inequality hold for some $\theta \in \Theta$.
		\end{defn}
		We need the following lemma taken from \cite{garg2024unified} to prove the main result of this section.
		\begin{lemma}(\cite{garg2024unified})\label{pt1} Let $Y$ be a random variable having a probability density function and let $m_Y$ be the median of $Y$. Let $L(t)$ be a non negative function such that $L(1)=0$, $L(t)$ is strictly increasing for $t>1$ and strictly decreasing for $t<1$. Then, for $-\infty<d_1<d_2\leq m_Y$ or $-\infty<m_Y\leq d_2<d_1$, $\mbox{GPC}=P[L(Y^kd_2)<L(Y^kd_1)]+\frac{1}{2}P[L(Y^kd_2)=L(Y^kd_1)]>\frac{1}{2}$.
		\end{lemma}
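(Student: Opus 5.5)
The approach is a pointwise comparison in the realized value of $Z=Y^k$. Take $k>0$, so that $Z$ is a positive random variable with a continuous distribution; the case $k<0$ is handled by replacing $Z$ by $1/Z$ in the same argument. The estimators are positive multiples of $Z$, so the natural reading of the median condition is on the scale where $dZ$ crosses $1$. I will read it as $d_2\le m$, where $m=1/\mathrm{med}(Z)$; this is what the hypothesis ``$d_2\le m_Y$'' must mean for the multiplicative problem $L(dY^k)$. Throughout, $0<d_1<d_2$ in the first case and $d_1>d_2\ge m$ in the second.

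\textbf{First case} ($d_1<d_2\le m$). Fix a realized value $z>0$.
\begin{enumerate}
\item[(a)] If $d_2z\le 1$, then $d_1z<d_2z\le 1$. Since $L$ is strictly decreasing on $(0,1]$ (with $L(1)=0$ and $L>0$ elsewhere), we get $L(d_2z)<L(d_1z)$. Hence
\begin{align*}
\{Z\le 1/d_2\}\subseteq\{L(d_2Z)<L(d_1Z)\}.
\end{align*}
\item[(b)] Because $1/d_2\ge 1/m=\mathrm{med}(Z)$, the event $\{Z\le 1/d_2\}$ has probability at least $\tfrac12$.
\item[(c)] To make the inequality strict, I show the strict-improvement set extends past $z=1/d_2$. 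At $z=1/d_2$ we have $L(d_2z)=L(1)=0$ and $L(d_1z)=L(d_1/d_2)>0$. The loss is convex, hence continuous, so $L(d_2z)<L(d_1z)$ persists on an interval $(1/d_2,\,1/d_2+\varepsilon)$.
\item[(d)] Since $Z$ has a density, I need this interval to carry positive probability. This holds whenever $1/d_2$ is not at or beyond the upper end of the support. That is automatic here: $1/d_2$ is at most the median, which lies strictly inside the support of a gamma-type law.
\end{enumerate}
Together, (a)--(d) give $P[L(d_2Z)<L(d_1Z)]>\tfrac12$. The tie term $\tfrac12P[L(d_2Z)=L(d_1Z)]$ is nonnegative, so $\mathrm{GPC}>\tfrac12$.

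\textbf{Second case} ($m\le d_2<d_1$). This is the mirror image.
\begin{enumerate}
\item[(a)] If $d_2z\ge 1$, then $d_1z>d_2z\ge 1$, and strict increase of $L$ on $[1,\infty)$ gives $L(d_2z)<L(d_1z)$.
\item[(b)] The event $\{Z\ge 1/d_2\}$ has probability at least $\tfrac12$, because $1/d_2\le \mathrm{med}(Z)$.
\item[(c)] At $z=1/d_2$ we again have $L(d_2z)=0<L(d_1/d_2)$. Continuity then yields a left neighbourhood $(1/d_2-\varepsilon,\,1/d_2)$ of positive probability on which the strict inequality still holds.
\end{enumerate}

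The main obstacle is the strictness step and the precise interpretation of the median. Nonstrict $\ge\tfrac12$ follows immediately from monotonicity of $L$ on each side of $1$. Strict $>\tfrac12$ needs three ingredients:
\begin{enumerate}
\item[(1)] continuity of $L$ near $1$, which comes from convexity in (A1), or could be added as a hypothesis;
\item[(2)] $L(d_1/d_2)>0$, which is where strict monotonicity and $d_1\neq d_2$ enter;
\item[(3)] positive density of $Z$ in a neighbourhood of $1/d_2$.
\end{enumerate}
If $L$ were merely monotone and not continuous, I would instead argue directly on the set $\{z: d_1z<1<d_2z\}$. There I would compare $L(d_2z)$ and $L(d_1z)$ via their limits as $z\downarrow 1/d_2$: the value $L(d_2z)$ tends to $L(1^+)$, while $L(d_1z)$ stays bounded below by $L(d_1/d_2)>0$. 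This route only requires right-continuity of $L$ at $1$.
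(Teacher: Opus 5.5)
The paper gives no proof of this lemma; it imports it from \cite{garg2024unified}. Your pointwise argument (monotonicity of $L$ on each side of $1$, the median bound, continuity at the crossing point $z=1/d_2$) is the standard one, and it is correct for the version the paper actually uses. Your rescaling of the median condition to $m=1/\mathrm{med}(Y^k)$ is necessary, not merely natural. Read literally, with $m_Y$ the median of $Y$, the statement is false: take $k=1$, $L(t)=|t-1|$, $\mathrm{med}(Y)=3$ and $d_1=1<d_2=2$. Then $\{L(2Y)<L(Y)\}=\{Y<2/3\}$, whose probability is below $\tfrac12$.

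Two slips should be fixed.

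First, in step (d) of the first case you have $1/d_2\ge\mathrm{med}(Z)$, not ``at most the median''. Also, ``$1/d_2$ is not at or beyond the upper end of the support'' is not a sufficient criterion. Your argument needs $P(1/d_2<Z<1/d_2+\varepsilon)>0$ (on the left in the second case), which holds if the density is positive near $1/d_2$. Without a condition of this kind, such as a unique median, the lemma fails. Let $Z$ have density $\tfrac12$ on $(0,1)\cup(2,3)$, take the median to be $1$, and set $d_2=1$, $d_1=0.6$, $L(t)=|t-1|$; then $\mathrm{GPC}=\tfrac12$ exactly, although $1/d_2$ is well inside the support. For the conditional gamma laws in the paper the density is positive on all of $(0,\infty)$, so your conclusion survives.

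Second, the remark about $k<0$ is unnecessary. Nothing in the argument uses the sign of $k$, only that $Z=Y^k$ is positive with a density.

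The hypothesis on $L$ that you isolate is also genuinely needed, not just convenient. Take $L(t)=\epsilon(1-t)$ for $t<1$, $L(1)=0$, $L(t)=t$ for $t>1$, with $0<\epsilon<1$, and let $d_2=m$. Then the strict-improvement set is exactly $\{Z\le 1/d_2\}$ and $\mathrm{GPC}=\tfrac12$. So continuity of $L$ at $1$ (supplied by (A1)) and positive density near $1/d_2$ are what make the quoted lemma true. Continuity is only used in the boundary case $d_2=m$: for $d_2<m$, a positive density already gives $P(Z\le 1/d_2)>\tfrac12$.
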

		Note that, the problem of estimating $\tau$ for unrestricted case (the parametric space $\Theta=\mathbb{R}\times\mathbb{R}\times\mathbb{R}^+$), the GPC criterion is invariant under the group of transformation $\mathcal{G}$. Any location equivariant estimator is of the form $\delta_d=dS_1^k$. Under the unrestricted parameter space $\Theta$, an immediate consequence of the Lemma \ref{pt1} is that the Pitman closest equivariant estimator (PCE) of the location parameter $\tau$, under the GPC criterion is $\delta_{PCE}=m_0S_1^k$, where $m_0$ is the median of the random variable $V=\frac{S_1}{\sigma_1}$. We denote $\underline{\gamma}=(\mu_1,\mu_2,\sigma)$ and $\Theta_0=\big\{(\mu_1,\mu_2,\sigma):\ \mu_1,\ \mu_2\in \mathbb{R},\ 0<\sigma<\infty\}$.

		\subsection{Improve estimation of $\sigma_1^k$}
		Let $\delta_{\varphi_1}(\underline{X},S)=\varphi_1(T)S_1^k$ and $\delta_{\varphi_2}(\underline{X},S)=\varphi_2(T)S_1^k$ be two scale invariant estimators of $\sigma_1^k$, where $\varphi_1$ and $\varphi_2$ are two positive real-valued functions defined on $\mathbb{R}$. Then, the GPC of $\delta_{\varphi_1}(X,S)$ relative to $\delta_{\varphi_2}(X,S)$ is given by
		\begin{align*}
			\mbox{GPC}(\delta_{\varphi_1},\delta_{\varphi_2};\eta)&= P_{\eta}\left[L(\frac{\varphi_1(T)S_1^k}{\sigma_1^k})  < L(\frac{\varphi_2(T)S_1^k}{\sigma_1^k})\right]\  + \frac{1}{2} P_{\eta}\left[L(\frac{\varphi_1(T)S_1^k}{\sigma_1^k})  = L(\frac{\varphi_2(T)S_1^k}{\sigma_1^k})\right]\\ 
			&=E^T\Big[P\big[L(\varphi_1(T)V_1^k) < L(\varphi_2(T)V_1^k)|T\big]\Big]\ + \frac{1}{2} E^T\Big[P\big[L(\varphi_2(T)V_1^k) < L(\varphi_2(T)V_1^k)|T\big]\Big]\\
		\end{align*}
		For any fixed positive $t$, we define 
		\begin{align*}
			r_{\eta}(\varphi_1(t),\varphi_2(t),t) = P_{\eta}\big[L(\varphi_1(t)V_1^k) &< L(\varphi_2(t)V_1^k)\big| T=t\big]\ + \frac{1}{2} P_{\eta}\big[L(\varphi_1(t) V_1^k) = L(\varphi_2(t)V_1^k)\big | T=t\big]
		\end{align*}
		For any fixed $\eta\leq1$ and $t>0$, let $m_\eta(t)$ denote the median of the distribution $V_1$ given $T=t$. The conditional distribution $V_1$ given $T=t$ is 
		$$f_{\eta}(v_1|t)\propto \eta^{p_2-1} e^{-v_1(1+\eta t)}v_1^{p_1+p_2-3},\ \ v_1>0,$$
		Thus, \begin{align*}
			\int_{0}^{m_\eta(w)}e^{-v_1(1+\eta t)}v_1^{p_1+p_2-3}dv_1 =\frac{1}{2}\int_{0}^{\infty} e^{-v_1(1+\eta t)}v_1^{p_1+p_2-3}dv_1.
		\end{align*}
		For any fixed $t>0$, by using the Lemma \ref{pt1}, we obtain $r_{\eta}\big(\varphi_1(t),\varphi_2(t),t\big) >1/2$ $\forall \ 0<\eta \leq 1$, if $\varphi_2(t)<\varphi_1(t)\leq m_\eta(t)$ $\forall \ \eta \geq 0$ or if $m_\eta(t)\leq\varphi_1(t)<\varphi_2(t)$ $\forall \ \eta \geq 0$. Moreover, for any fixed $t$, $r_{\eta}(\varphi_1(t),\varphi_2(t),t)=1/2$ $\forall \ 0<\eta \leq 1$.
		\begin{theorem}\label{thpt1}
			Let $\delta_{\varphi_1}(X,S)=\varphi_1(T)S_1^k$ be a scale equivariant estimator of $\sigma_1^k$. For any $t$, let $l(t)$ and $u(t)$ be function such that $l(t)\leq m_\eta(t)\leq u(t)$ $\forall \ \eta \leq 1$. At any fixed $t$, we define $\widetilde{\varphi_1}(t)=\max\left\{l(t),\min\left\{\varphi_1(t), u(t)\right\}\right\}$. Then under GPC criterion with a general loss function $L(t)$ the estimator $\delta_{\widetilde{\varphi_1}}(X,S)= \widetilde{\varphi_1}(T)S_1^k$ is Pitman closest to $\sigma_1^k$ than the estimator $\delta_{\varphi_1}(X,S)=\varphi_1(T)S_1^k$ $\forall \ \underline{\gamma} \in \Theta_0$, provided $P\big[l(t)\leq \varphi_1(t)\leq u(t)\big]<1$ $\forall \ \underline{\gamma} \in \Theta_0$.
		\end{theorem}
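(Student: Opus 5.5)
The plan is to condition on $T=t$ and apply Lemma \ref{pt1} pointwise in $t$, then integrate over the distribution of $T$. Write the GPC of $\delta_{\widetilde{\varphi_1}}$ relative to $\delta_{\varphi_1}$ as
\begin{align*}
\mbox{GPC}(\delta_{\widetilde{\varphi_1}},\delta_{\varphi_1};\eta)=E^T\big[r_\eta(\widetilde{\varphi_1}(T),\varphi_1(T),T)\big].
\end{align*}
For fixed $t>0$ and $0<\eta\le 1$, the conditional law of $V_1$ given $T=t$ is the gamma density $f_\eta(v_1|t)$ displayed above. It has median $m_\eta(t)$, and by hypothesis $l(t)\le m_\eta(t)\le u(t)$. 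The task is to show $r_\eta\ge 1/2$ for every $t$, with strict inequality on the set where $\varphi_1(t)\notin[l(t),u(t)]$.

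The pointwise analysis splits into three cases according to the position of $\varphi_1(t)$.

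\textbf{Case 1:} $l(t)\le\varphi_1(t)\le u(t)$. Then $\widetilde{\varphi_1}(t)=\varphi_1(t)$, the two losses coincide, and $r_\eta=1/2$.

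\textbf{Case 2:} $\varphi_1(t)>u(t)$. Then $\widetilde{\varphi_1}(t)=u(t)$, so $m_\eta(t)\le u(t)=\widetilde{\varphi_1}(t)<\varphi_1(t)$. Lemma \ref{pt1}, applied with $d_2=\widetilde{\varphi_1}(t)$ and $d_1=\varphi_1(t)$ (the second configuration $m_Y\le d_2<d_1$) to $Y=V_1\mid T=t$, gives $r_\eta>1/2$.

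\textbf{Case 3:} $\varphi_1(t)<l(t)$. Then $\widetilde{\varphi_1}(t)=l(t)$, so $\varphi_1(t)<\widetilde{\varphi_1}(t)\le m_\eta(t)$. The first configuration of Lemma \ref{pt1} gives $r_\eta>1/2$.

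Because the bounds $l,u$ bracket $m_\eta(t)$ uniformly in $\eta\le 1$, these conclusions hold for all $\eta$ simultaneously. Taking expectation over $T$ gives
\begin{align*}
\mbox{GPC}=\tfrac12+E^T\big[(r_\eta-\tfrac12)\mathbf{1}\{\varphi_1(T)\notin[l(T),u(T)]\}\big]>\tfrac12,
\end{align*}
since the integrand is strictly positive on an event of positive probability, by the assumption $P[l(T)\le\varphi_1(T)\le u(T)]<1$. Moreover, the distribution of $T$ and the conditional law of $V_1$ depend on $\underline{\gamma}$ only through $\eta$ (the location parameters do not enter $S_1,S_2$). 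Hence the inequality holds for all $\underline{\gamma}\in\Theta_0$ with $\eta\le1$.

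The main obstacle is a technical point in using Lemma \ref{pt1}: it is stated in terms of the median of $Y$ with the loss evaluated at $Y^k d$. For the argument to go through, $m_\eta(t)$ must be the correct critical point for $L(dV_1^k)$. This is presumably the median of $V_1^{-k}$, or equivalently $m_\eta(t)$ should be interpreted so that the lemma applies. The cleanest route is to apply the lemma to $Y=V_1^k$ with $k$ absorbed. Using monotonicity of $v\mapsto v^k$ for $k>0$, the median of $V_1^k$ is $m^k$, so I would read $m_\eta(t)$ in the theorem as the median appropriate to the lemma's statement. Everything else is the routine conditioning argument above.
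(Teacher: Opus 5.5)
Your argument coincides with the paper's proof, which likewise writes the GPC as $\int_0^\infty r_\eta(\widetilde{\varphi_1}(t),\varphi_1(t),t)f_t(t|\eta)\,dt$, partitions into $A=\{\varphi_1<l\}$, $B$, $C=\{\varphi_1>u\}$, applies Lemma \ref{pt1} on $A\cup C$, and concludes from $P(A\cup C)>0$. Your closed interval in Case 1 is a bit tidier, since the paper's open $B$ leaves the points $\varphi_1=l$ or $\varphi_1=u$ unassigned.

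Your caveat about the median is genuine, and the paper does not address it. For $d_1<d_2$ one has $L(d_2V_1^k)<L(d_1V_1^k)$ on $\{V_1^{-k}\ge d_2\}$, and for $d_2<d_1$ the same holds on $\{V_1^{-k}\le d_2\}$. Hence the cutoff in Lemma \ref{pt1} must be the conditional median of $V_1^{-k}$, namely $m_\eta(t)^{-k}$. That was your first guess; the $m^k$ reading is wrong. With $m_Y$ literally the median of $Y$ the lemma is false: take $k=1$, $Y$ concentrated near $10$, and $d_1=0.1<d_2=5$, which gives GPC near $0$. With the bracket $l(t)\le m_\eta(t)^{-k}\le u(t)$ in place of the stated one, your three cases go through verbatim.
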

		\noindent \textit{Proof:} The GPC of the estimator $\delta_{\widetilde{\varphi_1}}(X,S)=\widetilde{\varphi_1}(T)S_1^k$ relative to $\delta_{\varphi_1}(X,S)=\varphi_1(T)S_1^k$ can be written as $\mbox{GPC}(\delta_{\widetilde{\varphi_1}},\delta_{\varphi_1},\theta)=\int_{0}^{\infty}r_{\eta}(\widetilde{\varphi_1}(t),\varphi_1(t),t)f_t(t|\eta) dt,\ \eta \leq 1$. Let $A = \left\{t: \varphi_!(t)<l(t)\right\}$, $B=\left\{t: l(t)<\varphi_1(t)<u(t)\right\}$, $C=\left\{t: \varphi_1(t)>u(t)\right\}$. It is clear to us 
		\begin{equation*}
			\widetilde{\varphi_1}(t) = \begin{cases} 
				l(t), & t \in A,\\
				\varphi_1(t), & t\in B,\\
				u(t), & t\in C.
			\end{cases}
		\end{equation*} 
		Because $l(t)\leq m_{\eta}(t)\leq u(t)$ $\forall \ \eta \leq 1$ and $t$. Using Lemma \ref{pt1}, we have, $r_{\eta}(\widetilde{\varphi_1}(t),\varphi_1(t),t)>\frac{1}{2}\ \forall\ \eta \leq 1$ provided $t \in A \cup C$.
		For $t\in B$, $r_{1,\eta}(\widetilde{\varphi_1}(t),\varphi_1(t),t)=\frac{1}{2}$ $\forall \ \eta \leq 1$. Again since $P_{\underline{\gamma}}(A\cup C)>0$ $\forall\ \underline{\gamma} \in \Theta_0$ we have
		\begin{align*}
			\mbox{GPC}(\delta_{\widetilde{\varphi_1}},\delta_{\varphi_1},\underline{\gamma})&=\int_Ar_{1,\eta}(\widetilde{\varphi_1}(t),\varphi_1(t),t)f_t(t|\eta) dt +\int_B r_{\eta}(\widetilde{\varphi_1}(t),\varphi_1(t),t)f_t(t|\eta) dt \\
			&~~~~~~~~~~~~~~~~~~~~~~~~~~~~~~~~~~~~~~~~~~~~~~~~~~~~~~~~~~+\int_Cr_{\eta}(\widetilde{\varphi_1}(t),\varphi_1(t),t)f_t(t|\eta) dt\\
			&>\frac{1}{2} ~~~~~~\forall~ \underline{\gamma}\in \Theta_0
		\end{align*} 
		\begin{corollary}
			Let $l(t)$ and $u(t)$ be as defined as above, suppose that $P\big[l(t)\leq m_{01} \leq u(t)\big]<1$ $\forall \ \underline{\gamma} \in \Theta_0$. For any fixed $t>0$ defined $\widetilde{\varphi_1}(t)=\max \left\{ l(t), \min\left\{ m_{01},u(t)\right\}\right\}=\min\left\{m_{01},u(t)\right\}$. Then for any every $\underline{\gamma} \in \Theta_0$ the estimator $\delta_{\widetilde{\varphi_1}}(X,S)=\widetilde{\varphi_1}(T)S_1^k$ is Pitman closest to $\sigma_1^k$ than the (Pitman closest afine equivariant estimator) PCAEE $\delta_{p1}(X,S)=m_{01}S_1^k$ under the GPC criterion.
		\end{corollary}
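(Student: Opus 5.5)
The corollary follows from Theorem \ref{thpt1} by taking $\varphi_1(t)\equiv m_{01}$, the constant multiplier of the PCAEE $\delta_{p1}=m_{01}S_1^k$. Here $m_{01}$ is the median of $V_1=S_1/\sigma_1$, which has a $Gamma(p_1-1,1)$ distribution, in the sense of Lemma \ref{pt1} with $L$ applied to $Y^k d$. This constant function is positive and measurable, so it is an admissible choice in Theorem \ref{thpt1}. Its hypothesis $P[l(T)\le \varphi_1(T)\le u(T)]<1$ becomes $P[l(T)\le m_{01}\le u(T)]<1$, which is exactly the assumption of the corollary. Theorem \ref{thpt1} then gives that $\max\{l(T),\min\{m_{01},u(T)\}\}S_1^k$ is Pitman closer to $\sigma_1^k$ than $m_{01}S_1^k$ for every $\underline{\gamma}\in\Theta_0$.

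It remains to justify the identity $\max\{l(t),\min\{m_{01},u(t)\}\}=\min\{m_{01},u(t)\}$, which amounts to $l(t)\le m_{01}$ and $l(t)\le u(t)$. The second holds because $l(t)\le m_\eta(t)\le u(t)$. For the first, I would use the natural choice of bounds coming from monotonicity of the conditional medians. The conditional law of $V_1$ given $T=t$ is $Gamma(p_1+p_2-2,(1+\eta t)^{-1})$, so $m_\eta(t)=c/(1+\eta t)$ with $c$ the median of $Gamma(p_1+p_2-2,1)$. This is decreasing in $\eta$, so over $0<\eta\le1$ one may take $u(t)=c$, the supremum as $\eta\to0$, and $l(t)=c/(1+t)$, the value at $\eta=1$. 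Then $l(t)\to0$ as $t\to\infty$, while $m_{01}$ is fixed. The inequality $l(t)\le m_{01}$ for all $t$ requires $c\le m_{01}$, which is false in general, since $c$ is the median of a gamma with larger shape than $V_1$.

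\textbf{The main obstacle} is therefore this identity, which needs $l(t)\le m_{01}$ for every $t$ and cannot be obtained from the general bounds alone. I would handle it in one of two ways. First, I could restrict to lower bounds satisfying $l(t)\le m_{01}$, such as $l(t)=\min\{c/(1+t),m_{01}\}$, which is still a valid lower bound for all $m_\eta(t)$. Second, I could simply note that when $m_{01}<l(t)$ the $\max$ form prescribed by Theorem \ref{thpt1} is the one to use, and the Pitman-closeness conclusion is unaffected. In either case the conclusion rests entirely on Theorem \ref{thpt1}, which in turn uses Lemma \ref{pt1} pointwise in $t$ on the sets $A\cup C$ and equality $1/2$ on $B$, integrated against the density of $T$.

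Finally, strict improvement requires $P(A\cup C)>0$, which is the stated assumption. With $u(t)=c$ constant and $m_{01}<c$ typically, it is $A$ (where $m_{01}<l(t)$) that carries the positive probability.
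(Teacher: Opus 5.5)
Your first step, taking $\varphi_1\equiv m_{01}$ in Theorem~\ref{thpt1} so that its hypothesis becomes exactly $P[l(T)\le m_{01}\le u(T)]<1$, is all the paper does; it gives no further argument. You are right that the identity $\max\{l(t),\min\{m_{01},u(t)\}\}=\min\{m_{01},u(t)\}$ needs $l(t)\le m_{01}$. Your formula $m_\eta(t)=c/(1+\eta t)$ is also correct for the paper's literal convention.

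The gap is that your way out does not prove the statement. Under that convention every admissible $u$ satisfies $u(t)\ge\sup_{0<\eta\le 1}m_\eta(t)=c>m_{01}$. Hence $\min\{m_{01},u(t)\}\equiv m_{01}$, so the estimator named in the corollary \emph{is} the PCAEE, and an estimator's GPC relative to itself is exactly $1/2$. Your first fix makes $A$ empty by construction, and $C=\{m_{01}>u(t)\}$ is empty too. Then $P[l(T)\le m_{01}\le u(T)]=1$, the hypothesis fails, and nothing is proved. Your second fix proves a claim about $\max\{l(T),m_{01}\}S_1^k$, an upward truncation, which is not the estimator in the statement. Your closing remark that the gain is carried by $A$ also contradicts the identity you set out to justify. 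So ``in either case'' does not deliver the corollary.

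What is missing is the orientation of the cutoff. For a multiplier $d$ of $Y^k$ with $Y>0$, the threshold in Lemma~\ref{pt1} must be the $d$ at which $dY^k$ has median one, namely $m_Y^{-k}$, not $m_Y$. With $m_Y$ itself the lemma is false: for $L_1$, $k=1$, $m_Y=10$ and $d_1=8<d_2=9$, one has $L_1(9Y)<L_1(8Y)$ if and only if $Y<2/17$, so the GPC is at most $1/2$.

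With the correct cutoffs everything works out:
\begin{itemize}
\item The conditional cutoff is $((1+\eta t)/c)^k$, which is increasing in $\eta$, so $l(t)=c^{-k}$ and $u(t)=((1+t)/c)^k$ are valid bounds.
\item The unconditional constant is $m_{01}=\tilde m^{-k}$, with $\tilde m$ the median of $\mathrm{Gamma}(p_1-1,1)$.
\item The inequality you yourself invoked, $c>\tilde m$, now gives $l(t)<m_{01}$ for all $t$, so the identity is automatic and $A=\emptyset$.
\item $C=\{T<c/\tilde m-1\}$ has positive probability, so the hypothesis holds.
\end{itemize}
Theorem~\ref{thpt1} then yields the Stein-type estimator $\min\{\tilde m^{-k},((1+T)/c)^k\}S_1^k$, the Pitman-closeness counterpart of $\delta_{11}$. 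This is the reading under which the paper's one-line specialization is a complete proof.
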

		Note that the following corollary provides improvements over the unrestricted BAEE $\delta_{0}(S)=d_{01}S_1^k$.
		\begin{corollary}
			Let $l(t)$ and $u(t)$ be as defined in above suppose $P\big[l(t)\leq d_{01} \leq u(t)\big]<1$ $\forall \ \underline{\gamma} \in \Theta_0$. Define for any $t$, $\varphi_1^*(t)=\max \left\{ l(t), \min\left\{ d_{01} , u(t) \right\} \right\}$. Then for any every $\underline{\gamma} \in \Theta_0$ the estimator $\delta_{\varphi_1^*}(X,S)=\varphi_1^*(T)$ is Pitman closest to $\sigma_1^k$ than the BAEE $\delta_{01}(X,S)=d_{01}S_1^k$ under the GPC criterion.
		\end{corollary}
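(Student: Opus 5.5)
The last corollary should follow from Theorem \ref{thpt1} with the constant choice $\varphi_1(t)\equiv d_{01}$. Then $\delta_{\varphi_1}=d_{01}S_1^k$ is the BAEE $\delta_{01}$, and the truncated function $\widetilde{\varphi_1}(t)=\max\{l(t),\min\{d_{01},u(t)\}\}$ is exactly $\varphi_1^*(t)$. I also read the conclusion's $\delta_{\varphi_1^*}(X,S)=\varphi_1^*(T)$ as $\varphi_1^*(T)S_1^k$, the natural scale-equivariant form.

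The plan is to check the hypotheses of Theorem \ref{thpt1} one at a time.

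\emph{Step 1: measurability and positivity.} The function $\varphi_1\equiv d_{01}>0$ is measurable and positive. Since $l,u$ bracket the conditional medians $m_\eta(t)>0$, we may take $l\ge 0$ and $u>0$. Then $\varphi_1^*$ is a positive measurable function of $T$, so $\delta_{\varphi_1^*}\in\mathcal{D}_1$.

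\emph{Step 2: the standing condition on $l$ and $u$.} We need $l(t)\le m_\eta(t)\le u(t)$ for all $0<\eta\le 1$. This holds because $l$ and $u$ are ``as defined above''.

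For concreteness one can name them. Given $T=t$, we have $V_1(1+\eta t)\sim Gamma(p_1+p_2-2,1)$, so
\[
m_\eta(t)=\frac{m^*}{1+\eta t},
\]
where $m^*$ is the median of $Gamma(p_1+p_2-2,1)$. This is decreasing in $\eta$. Taking the infimum at $\eta=1$ and the supremum as $\eta\to0$ gives the admissible choice $l(t)=m^*/(1+t)$ and $u(t)=m^*$.

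\emph{Step 3: the nondegeneracy condition.} Theorem \ref{thpt1} requires $P[l(T)\le d_{01}\le u(T)]<1$, which is precisely the hypothesis of the corollary. This is where the GPC improvement is strict, because the event $A\cup C$ in the proof of Theorem \ref{thpt1} has positive probability.

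With these three checks, Theorem \ref{thpt1} gives $\mathrm{GPC}(\delta_{\varphi_1^*},\delta_{01};\underline{\gamma})>1/2$ for every $\underline{\gamma}\in\Theta_0$.

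The loss $L$ must satisfy the requirements of Lemma \ref{pt1}: $L(1)=0$, strictly decreasing on $(0,1)$ and strictly increasing on $(1,\infty)$. This follows from (A1) when convexity is strict.

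The main obstacle is a small mismatch in how Lemma \ref{pt1} is applied. The lemma is phrased in terms of $Y^k d$ and the median of $Y$, whereas the pointwise comparison in Theorem \ref{thpt1} needs to be made against the median of the conditional law of $V_1$ given $T=t$. For the constant function I will simply inherit the theorem's own usage of $m_\eta(t)$ and not re-derive this point. If that turns out to be insufficient, the fix is to replace $m_\eta(t)$ by the appropriate $k$-th-power-adjusted quantity and rescale $l$ and $u$ accordingly. The nondegeneracy hypothesis is then unchanged.
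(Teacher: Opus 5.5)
\textbf{Verdict: genuine gap.} Your reduction is the paper's own route: the corollary is stated as an immediate consequence of Theorem \ref{thpt1} with $\varphi_1\equiv d_{01}$. But the point you set aside as ``the main obstacle'' is a real error, not a bookkeeping mismatch, and by deferring it your argument ends up proving a false statement.

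\emph{The wrong pivot.} The issue is not which law $Y$ has; one simply takes $Y$ to be $V_1$ given $T=t$. Take $0<d_1<d_2$ and $Z=Y^k$. Then $d_2$ wins whenever $d_2Z\le 1$ and loses whenever $d_1Z\ge 1$. So the comparison of $L(dY^k)$ is pivoted at the median of $Y^{-k}$, namely $m_Y^{-k}$, not at $m_Y$. Lemma \ref{pt1} as transcribed in the paper uses the wrong pivot, and so does Theorem \ref{thpt1}.

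\emph{A concrete failure.} Your own Step 2 exposes this. Take $k=1$, $p_1=p_2=10$ and $L_1$, so $d_{01}=1/10$ and $m^*\approx 17.7$ (the median of $Gamma(18,1)$).
\begin{itemize}
\item Your choice $l(t)=m^*/(1+t)$, $u(t)=m^*$ gives $\varphi_1^*(t)=m^*/(1+t)$ for all $t<m^*/d_{01}-1\approx 176$.
\item At $\eta=1$ and $T=t$, under $L_1$ the value $\varphi_1^*(t)$ beats $d_{01}$ if and only if $V_1<2/(\varphi_1^*(t)+d_{01})$.
\item Hence the conditional GPC is $P\big(G<2(1+t)^2/(m^*+d_{01}(1+t))\big)$ with $G\sim Gamma(18,1)$.
\item At $t=1$ this is $P(G<0.45)\approx 0$. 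It exceeds $1/2$ only for $t$ beyond about $12$, and $T=V_2/V_1$ puts negligible mass there.
\end{itemize}
So the unconditional GPC is essentially $0$, not above $1/2$. Typically $\varphi_1^*(T)S_1\approx 75\sigma_1$, against $d_{01}S_1\approx 0.9\sigma_1$.

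\emph{The repair.} It is the one you gestured at, but it needs the reciprocal as well as the power. The correct pointwise lemma reads: if $d_1<d_2\le m_Y^{-k}$ or $m_Y^{-k}\le d_2<d_1$, then $\mathrm{GPC}>1/2$ in favour of $d_2$. The inequality is strict because $L$ is continuous and the gamma density is positive just beyond $1/d_2$, where $d_2$ still wins.
\begin{itemize}
\item So $l$ and $u$ must bracket $\rho_\eta(t)=m_\eta(t)^{-k}=\big((1+\eta t)/m^*\big)^k$, the conditional median of $V_1^{-k}$.
\item This quantity is increasing in $\eta$, the opposite direction to your $m_\eta(t)$. The admissible choice is therefore $l(t)=(m^*)^{-k}$ and $u(t)=\big((1+t)/m^*\big)^k$.
\item With these, the $A$, $B$, $C$ argument of Theorem \ref{thpt1} goes through verbatim for $\varphi_1\equiv d_{01}$: ties on $B$, conditional GPC $>1/2$ on $A\cup C$.
\item The nondegeneracy hypothesis becomes $m^*d_{01}^{1/k}\neq 1$.
\item The resulting $\varphi_1^*(T)=\max\{(m^*)^{-k},\min\{d_{01},((1+T)/m^*)^k\}\}$ is a Stein-type truncation of the same shape as $\delta_{11}$.
\end{itemize}

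\emph{A smaller point.} (A1) with strict convexity does not by itself give the bowl shape that Lemma \ref{pt1} needs. You also need $L\ge 0$, i.e.\ that $t=1$ is the minimizer.
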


		\subsection{Improve estimation of $\sigma_2^k$}
		Let $\delta_{\xi_1}(\underline{X},S)=\xi_1(W)S_2^k$ and $\delta_{\xi_2}(\underline{X},S)=\xi_2(W)S_2^k$ be two location equivariant estimators of $\theta$, where $\xi_1$ and $\xi_2$ are two positive real-valued functions defined on $\mathbb{R}$. Then, the GPC of $\delta_{\xi_1}(X,S)$ relative to $\delta_{\xi_2}(X,S)$ is given by
		\begin{align*}
			\mbox{GPC}(\delta_{\xi_1},\delta_{\xi_2};\eta)&= P_{\eta}\left[L\left(\frac{\xi_1(W)S_2^k}{\sigma_2^k}\right)  < L\left(\frac{\xi_2(W)S_2^k}{\sigma_2^k}\right)\right]\  + \frac{1}{2} P_{\eta}\left[L\left(\frac{\xi_1(W)S_2^k}{\sigma_2^k}\right)  = L\left(\frac{\xi_2(W)S_2^k}{\sigma_2^k}\right)\right]\\ 
			&=E^W\Big[P\big[L(\xi_1(W)V_2^k) < L(\xi_2(W)V_2^k)|W\big]\Big]\ + \frac{1}{2} E^W\Big[P\big[L(\xi_2(W)V_2^k) < L(\xi_2(W)V_2^k)|W\big]\Big]\\
		\end{align*}
		For any fixed positive $t$, we define 
		\begin{align*}
			r_{2,\eta}(\xi_1(w),\xi_2(w),w) = P_{\eta}\big[L(\xi_1(w)V_2^k) &< L(\xi_2(w)V_2^k)\big| W=w\big]\ + \frac{1}{2} P_{\eta}\big[L(\xi_1(w) V_2^k) = L(\xi_2(w)V_2^k)\big | W=w\big]
		\end{align*}
		For any fixed $\eta\leq1$ and $w>0$, let $m_\eta(w)$ denote the median of the distribution $V_2$ given $W=w$. The conditional distribution $V_2$ given $W=w$ is 
		$$g_{\eta}(v_2|w)\propto \mbox{Gamma}\left(p_1+p_2-2, \frac{1}{\left(1+\frac{w}{\eta}\right)}\right)$$
		Thus, 
		\begin{align*}
			\int_{0}^{m_\eta(w)}g_{\eta}(v_2|w) =\frac{1}{2}\int_{0}^{\infty} g_{\eta}(v_2|w).
		\end{align*}
		For any fixed $w>0$, by using the Lemma \ref{pt1}, we obtain $r_{2,\eta}\big(\xi_1(w),\xi_2(w),w\big) >1/2$ $\forall \ 0<\eta \leq 1$, if $\xi_2(w)<\xi_1(w)\leq m_\eta(w)$ $\forall \ \eta \geq 0$ or if $m_\eta(w)\leq\xi_1(w)<\xi_2(w)$ $\forall \ \eta \geq 0$. Moreover, for any fixed $w$, $r_{2,\eta}(\xi_1(w),\xi_2(w),w)=1/2$ $\forall \ 0<\eta \leq 1$.
		\begin{theorem}
			Let $\delta_{\xi_1}(X,S)=\xi_1(W)S_2^k$ be a scale equivariant estimator of $\sigma_2^k$. For any $w$, let $l(w)$ and $u(w)$ be function such that $l(w)\leq m_\eta(w)\leq u(w)$ $\forall \ \eta \leq 1$. At any fixed $w$, we define $\widetilde{\xi_1}(w)=\max\left\{l(w),\min\left\{\xi_1(w), u(w)\right\}\right\}$. Then under GPC criterion with a general loss function $L(t)$ the estimator $\delta_{\widetilde{\xi_1}}(X,S)= \widetilde{\xi_1}(W)S_2^k$ is Pitman closest to $\sigma_2^k$ than the estimator $\delta_{\xi_1}(X,S)=\xi_1(W)S_2^k$ $\forall \ \underline{\gamma} \in \Theta_0$, provided $P\big[l(t)\leq \xi_1(t)\leq u(t)\big]<1$ $\forall \ \underline{\gamma} \in \Theta_0$.
		\end{theorem}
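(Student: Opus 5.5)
The plan is to copy the proof of Theorem \ref{thpt1}, with $(T,V_1,S_1)$ replaced by $(W,V_2,S_2)$ and $r_\eta$ replaced by $r_{2,\eta}$.

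\textbf{Step 1 (conditioning on $W$).} By the display preceding the theorem,
\begin{align*}
\mbox{GPC}(\delta_{\widetilde{\xi_1}},\delta_{\xi_1};\underline{\gamma})=\int_0^\infty r_{2,\eta}\big(\widetilde{\xi_1}(w),\xi_1(w),w\big)f_W(w\mid\eta)\,dw,
\end{align*}
where $f_W(\cdot\mid\eta)$ is the density of $W=S_1/S_2$. This density depends on the parameters only through $\eta=\sigma_1/\sigma_2\le 1$, since $W$ is a ratio of independent gamma variables.

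\textbf{Step 2 (partition of the $w$-axis).} Split $(0,\infty)$ into
\[
A=\{w:\xi_1(w)<l(w)\},\qquad B=\{w:l(w)\le\xi_1(w)\le u(w)\},\qquad C=\{w:\xi_1(w)>u(w)\}.
\]
By definition, $\widetilde{\xi_1}$ equals $l$ on $A$, equals $\xi_1$ on $B$, and equals $u$ on $C$.

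\textbf{Step 3 (pointwise comparison).} Fix $w$ and $\eta\le1$. The conditional law of $V_2$ given $W=w$ is a Gamma distribution with a continuous density, so Lemma \ref{pt1} applies with $Y$ distributed as $V_2\mid W=w$. Its median $m_\eta(w)$ lies in $[l(w),u(w)]$ by hypothesis.

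On $A$ we have $\xi_1(w)<\widetilde{\xi_1}(w)=l(w)\le m_\eta(w)$. This is the case $d_1<d_2\le m_Y$ of the lemma, so $r_{2,\eta}>1/2$.

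On $C$ we have $m_\eta(w)\le u(w)=\widetilde{\xi_1}(w)<\xi_1(w)$. This is the mirror case $m_Y\le d_2<d_1$, so again $r_{2,\eta}>1/2$.

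On $B$ the two estimators coincide. There $L(\widetilde{\xi_1}V_2^k)=L(\xi_1V_2^k)$ holds surely, so $r_{2,\eta}=1/2$.

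One caveat concerns Lemma \ref{pt1}, which compares $d$'s with the median of $Y$. For $k\ne1$ the relevant median is really that of $Y^{-k}$, namely $m_\eta(w)^{-k}$ if $m_\eta(w)$ denotes the median of $V_2$. I will read $m_\eta(w)$, as the paper implicitly does, as the point of median balance entering Lemma \ref{pt1}. If needed I will replace it by the median of $V_2^{-k}$ given $W=w$. Since $v\mapsto v^{-k}$ is monotone, this is only a relabelling of the bounds $l,u$.

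\textbf{Step 4 (integration).} Integrate the pointwise bounds against $f_W(w\mid\eta)$. The hypothesis $P[l(W)\le\xi_1(W)\le u(W)]<1$ says $P_{\underline{\gamma}}(A\cup C)>0$, and the integrand is strictly above $1/2$ on $A\cup C$. Hence
\[
\mbox{GPC}(\delta_{\widetilde{\xi_1}},\delta_{\xi_1};\underline{\gamma})>\tfrac12\qquad\text{for all }\underline{\gamma}\in\Theta_0 .
\]

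The only point needing care is Step 3. It has two parts: the median of $V_2\mid W=w$ must be bracketed by $l,u$ uniformly in $\eta\le1$, which is assumed, and the median must be correctly identified in the sense Lemma \ref{pt1} requires. Beyond that, the argument is a direct transcription of the proof of Theorem \ref{thpt1}.
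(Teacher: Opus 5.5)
Your proof is correct and follows the paper's own route. The paper simply defers to the proof of Theorem~\ref{thpt1}: condition on $W$, split the $w$-axis into the sets where $\widetilde{\xi_1}=l$, $\widetilde{\xi_1}=\xi_1$ and $\widetilde{\xi_1}=u$, apply Lemma~\ref{pt1} pointwise, and integrate using $P_{\underline{\gamma}}(A\cup C)>0$; your closed set $B$ is in fact a cleaner partition than the paper's strict one.

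Your caveat about the median is well founded, and it applies even for $k=1$, not only for $k\neq 1$. For the loss $L(dY^k)$ the comparison point in Lemma~\ref{pt1} must be the median of $Y^{-k}$, i.e.\ $m_Y^{-k}$. For example, $Y\sim U(1,3)$, $k=1$, $d_1=1.5<d_2=2=m_Y$ gives $\mbox{GPC}=0$. So the theorem holds with $m_\eta(w)$ read as the median of $V_2^{-k}$ given $W=w$, exactly as you propose.
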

		\noindent \textit{Proof:} The theorem can be proved by using same argument as in Theorem \ref{thpt1}

		\begin{corollary}
			Let $l(w)$ and $u(w)$ be as defined as above, suppose that $P\big[l(w)\leq m_{02} \leq u(w)\big]<1$ $\forall \ \underline{\gamma} \in \Theta_0$. For any fixed $w>0$ defined $\widetilde{\xi_1}(w)=\max \left\{ l(w), \min\left\{ m_{02},u(w)\right\}\right\}=\min\left\{m_{02},u(w)\right\}$. Then for any every $\underline{\gamma} \in \Theta_0$ the estimator $\delta_{\widetilde{\xi_1}}(X,S)=\widetilde{\xi_1}(W)S_2^k$ is Pitman closest to $\theta$ than the (Pitman closest afine equivariant estimator) PCAEE $\delta_{p}(X,S)=m_{02}S_2^k$ under the GPC criterion.
		\end{corollary}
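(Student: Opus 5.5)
The corollary follows by applying the preceding theorem (the $\sigma_2^k$ analogue of Theorem \ref{thpt1}) with the constant function $\xi_1(w)\equiv m_{02}$. Here $m_{02}$ is the median of $V_2=S_2/\sigma_2\sim Gamma(p_2-1,1)$, so $\delta_{p}=m_{02}S_2^k$ is the PCAEE in the unrestricted problem, by Lemma \ref{pt1}. The plan is first to check that this choice meets the hypotheses of the theorem, and then to justify the simplification $\max\{l(w),\min\{m_{02},u(w)\}\}=\min\{m_{02},u(w)\}$ asserted in the statement.

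\textbf{Step 1 (applying the theorem).} Take $\xi_1\equiv m_{02}$, with $l(w)\le m_\eta(w)\le u(w)$ for all $\eta\le 1$. The hypothesis $P[l(W)\le m_{02}\le u(W)]<1$ for every $\underline{\gamma}\in\Theta_0$ is exactly the nondegeneracy condition of the theorem. So the set $A\cup C$, on which truncation actually moves the estimator, has positive probability. On that set Lemma \ref{pt1} gives $r_{2,\eta}(\widetilde{\xi_1}(w),m_{02},w)>1/2$, since the truncated value lies strictly between $m_{02}$ and the conditional median $m_\eta(w)$, or at the median's bracket. Off that set $r_{2,\eta}=1/2$. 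Integrating against the density of $W$ then gives $\mbox{GPC}(\delta_{\widetilde{\xi_1}},\delta_p;\underline{\gamma})>1/2$.

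\textbf{Step 2 (the simplification).} It remains to show $l(w)\le m_{02}$, so that the outer maximum is inactive. Given $W=w$, the variable $V_2$ has a $Gamma(p_1+p_2-2,(1+w/\eta)^{-1})$ law. The plan is to compare the conditional median $m_\eta(w)$ with $m_{02}$ using stochastic ordering in the scale $(1+w/\eta)^{-1}$. As $\eta\to 0$ the scale tends to $0$, so $\inf_\eta m_\eta(w)$ is approached in that limit. The natural admissible choice is therefore $l(w)=\inf_{\eta\le1}m_\eta(w)$. I would then argue that this infimum, taken over the allowed range, does not exceed $m_{02}$.

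\textbf{Main obstacle.} Step 2 is the delicate point, because the comparison pits a gamma with shape $p_1+p_2-2$ against one with shape $p_2-1$. I would first try a monotone likelihood ratio argument in $\eta$ to pin down which end of $(0,1]$ gives the smallest median. Failing that, I would fall back on reading the identity in the statement as part of the definition, that is, restricting to bounds $l$ with $l\le m_{02}$.
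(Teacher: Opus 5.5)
Your Step 1 is the paper's route. The corollary is the $\sigma_2^k$ analogue of Theorem~\ref{thpt1} applied to the constant function $\xi_1\equiv m_{02}$. The GPC conclusion needs nothing more, because that theorem already covers $\max\{l(w),\min\{m_{02},u(w)\}\}$ whether or not the outer maximum is active. The paper gives no argument at all for the displayed identity $\max\{l(w),\min\{m_{02},u(w)\}\}=\min\{m_{02},u(w)\}$.

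Your Step 2 is left unfinished, and the obstacle you name does not exist.

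\begin{itemize}
\item You already noted that, given $W=w$, $V_2\sim\mathrm{Gamma}(p_1+p_2-2,(1+w/\eta)^{-1})$. Hence $m_\eta(w)=\frac{\eta}{\eta+w}M$, where $M$ is the median of $\mathrm{Gamma}(p_1+p_2-2,1)$.
\item This expression is explicitly increasing in $\eta$, so no monotone likelihood ratio argument is needed. It tends to $0$ as $\eta\to0^+$, so $\inf_{0<\eta\le1}m_\eta(w)=0$.
\item The hypothesis requires $l(w)\le m_\eta(w)$ for every $\eta\in(0,1]$. So every admissible lower bound, not only the ``natural'' choice $l=\inf_\eta m_\eta$, satisfies $l(w)\le0<\min\{m_{02},u(w)\}$. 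That is exactly the identity.
\end{itemize}

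No comparison between the shapes $p_1+p_2-2$ and $p_2-1$ ever enters. Your fallback of restricting to bounds with $l\le m_{02}$ is vacuous, since every admissible $l$ already satisfies it.

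As a by-product, the set $\{m_{02}<l(W)\}$ is empty. The nondegeneracy hypothesis therefore reduces to $P[m_{02}>u(W)]>0$, and the improvement comes entirely from truncating above by $u(w)$, for instance $u(w)=M/(1+w)$, the value at $\eta=1$.

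Your worry would be legitimate only in the $\sigma_1^k$ problem. There the scale $(1+\eta t)^{-1}$ stays in $[(1+t)^{-1},1)$, so $\inf_\eta m_\eta(t)=M/(1+t)>0$, and a shape comparison genuinely matters.
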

		Note that the following corollary provides improvements over the unrestricted BAEE $\delta_{02}(S)=d_{02}S_2^k$.
		\begin{corollary}
			Let $l(w)$ and $u(w)$ be as defined in above suppose $P\big[l(w)\leq d_{02} \leq u(w)\big]<1$ $\forall \ \underline{\gamma} \in \Theta_0$. Define for any $w$, $\xi_1^*(w)=\max \left\{ l(w), \min\left\{ d_{02} , u(w) \right\} \right\}$. Then for any every $\underline{\gamma} \in \Theta_0$ the estimator $\delta_{\xi_1^*}(X,S)=\xi_1^*(W)$ is Pitman closest to $\sigma_2^k$ than the BAEE $\delta_{p2}(X,S)=d_{02}S_2^k$ under the GPC criterion.
		\end{corollary}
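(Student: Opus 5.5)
The statement is the last corollary: $\xi_1^*(w)=\max\{l(w),\min\{d_{02},u(w)\}\}$ yields an estimator Pitman closer to $\sigma_2^k$ than the BAEE $d_{02}S_2^k$. I will derive it as a special case of the preceding theorem for $\sigma_2^k$, taking the constant function $\xi_1(w)\equiv d_{02}$.

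With this choice, $\delta_{\xi_1}=d_{02}S_2^k$ is exactly the BAEE $\delta_{02}$, which lies in $\mathcal{C}_1$. Its truncation $\widetilde{\xi_1}(w)=\max\{l(w),\min\{d_{02},u(w)\}\}$ coincides with $\xi_1^*(w)$. The theorem's hypothesis $P[l(W)\le \xi_1(W)\le u(W)]<1$ becomes $P[l(W)\le d_{02}\le u(W)]<1$ for all $\underline{\gamma}\in\Theta_0$, which is precisely what the corollary assumes. The conclusion of the theorem is then the claim.

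For completeness I will also re-run the mechanism directly rather than only citing it. Write the GPC as $\int_0^\infty r_{2,\eta}(\xi_1^*(w),d_{02},w)\,f_W(w|\eta)\,dw$. Split $(0,\infty)$ into three sets:
\[
A=\{w: d_{02}<l(w)\},\qquad B=\{w: l(w)\le d_{02}\le u(w)\},\qquad C=\{w: d_{02}>u(w)\}.
\]
\begin{itemize}
\item On $B$ the two estimators coincide, so $r_{2,\eta}=\tfrac12$.
\item On $A$ we have $d_{02}<l(w)\le m_\eta(w)$. Lemma \ref{pt1}, applied to $V_2$ given $W=w$ with $d_1=d_{02}<d_2=l(w)\le m_\eta(w)$, gives $r_{2,\eta}>\tfrac12$.
\item On $C$ we have $m_\eta(w)\le u(w)<d_{02}$, and the same lemma gives $r_{2,\eta}>\tfrac12$.
\end{itemize}
Since $P(A\cup C)>0$, integrating gives $\mathrm{GPC}>\tfrac12$ for every $\underline{\gamma}$, with $\eta\le1$.

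The one delicate point is applying Lemma \ref{pt1} with the power $k$. The lemma is stated in the form $L(Y^kd)$, so the relevant median is that of $V_2^k$ given $W$. The bounds $l,u$ should therefore be read as bounds, uniform over $\eta\le1$, on the value $c$ at which $cV_2^k$ has conditional median $1$ (for $k=1$ this is the reciprocal of the median of $V_2$). I will adopt the same convention as in Theorem \ref{thpt1} and note it explicitly.

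The lemma also requires $L$ to be strictly monotone on either side of $1$. This follows from (A1) together with the paper's standing strict convexity assumption and $L(1)=0$ being the minimum. With these remarks the corollary follows immediately.
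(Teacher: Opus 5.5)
Your proposal is correct and follows the paper's route: the corollary is the constant case $\xi_1\equiv d_{02}$ of the preceding theorem, whose proof is the $A/B/C$ decomposition of Theorem~\ref{thpt1} combined with Lemma~\ref{pt1}. Your remark that $l(w),u(w)$ must bound $m_\eta(w)^{-k}$ (the constant $c$ making $cV_2^k$ have conditional median $1$) is not a convention you can inherit from Theorem~\ref{thpt1}, which literally uses the median of $V_1$; it is a genuine correction. Under the literal reading the conclusion can fail: take $k=1$, $l\equiv 0$ and $u(w)=M/(1+w)$, with $M$ the median of $\mathrm{Gamma}(p_1+p_2-2,1)$, and the truncation moves $d_{02}$ away from $c$. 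So state it as an amended hypothesis rather than a convention.
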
 
		
		
		\section{A simulation study}\label{se5}
		In this section, we use the Monte Carlo technique for the simulation study. In the simulation study we compare the relative risk improvement (RRI) of the obtained estimators with respect to BAEE. The RRI for an estimator $\delta$ with the respect to an estimator $\delta_0$ is defined as 
		\begin{align*}
			\text{RRI}(\delta)=\frac{\mbox{Risk}(\delta_0)-\mbox{Risk}(\delta)}{\mbox{Risk}(\delta_0)}\times 100
		\end{align*}
		Here, we have consider the simulation study for $k=2$. For this analysis, a total of $90{,}000$ random samples were generated from two exponential distributions, $E(\mu_1, \sigma_1^2)$ and $E(\mu_2, \sigma_2^2)$, across various choices of $(\mu_1, \mu_2)$, $(\sigma_1, \sigma_2)$ and $(p_1,p_2)$. The RRI of the obtained improved estimators for $\sigma_1^2$ are shown in Figure \ref{fig1}, \ref{fig2} and \ref{fig3} under the loss function $L_1(t)$, $L_2(t)$ and $L_3(t)$. For estimating $\sigma_1^2$, the following key observations are obtained from Figure \ref{fig1}.
		\begin{enumerate}
			\item [(i)] The RRI of Stein-type estimator $\delta_{11}^1$, $\delta_{12}^1$, $\delta_{13}^1$, $\delta_{14}^1$ increase as $\eta$ increases. Moreover, the improvement region corresponding to $\delta_{11}^1$ is larger than those of $\delta_{12}^1$, $\delta_{13}^1$ and $\delta_{14}^1$ for all values of $\eta$.
			\item [(ii)] The RRI of the estimator $\delta_{\varphi_{01}}^1$ is non-monotonic in $\eta$. Specifically,it increases as $\eta$ moves from 0, attains its maximum at moderate values of $\eta$ (approximately $0.4\leq \eta 0.6$) and then declines as $\eta$ approaches to 1.
			\item [(iii)] The estimator $\delta_{11}^1$, $\delta_{12}^1$, $\delta_{13}^1$, $\delta_{14}^1$ have relatively lower risk improvement for moderate values of $\eta$; however, their performance increases significantly as $\eta$ approaches to 1. Note that, when $\eta$ near to 1, these Stein-type estimator surpass the performance of the Brewster and Zidek-type estimator. In addition, the estimator $\delta_{11}^1$ achieves the highest RRI near $\eta=1$.
			\item [(iv)] It is observed that the estimator $\delta_{\varphi_{01}}^1$ performs better than $\delta_{11}^1$, $\delta_{12}^1$, $\delta_{13}^1$, $\delta_{14}^1$ over the range $0.1\leq \eta \leq 0.7$(approximately), but its performance decreases relative to them when $\eta\geq 0.7$(approximately).
			\item [(v)] The improvement regions for all improved estimators shrink as the sample size increases or as the parameter values $(\mu_1,\mu_2)$ move away from the origin $(0,0)$.
		\end{enumerate}
		Similar patterns are observed under the entropy loss function $L_2(t)$, the symmetric loss function $L_3(t)$ and the linex loss function $L_4(t)$. It is observed that a closed form expression for the improved estimator under the loss function $L_4(t)$ can not be obtained for $K\geq2$ hence we are not considered it in the simulation study. The RRI of the improved estimators with respect to BAEE for $\sigma_2^2$ under the loss functions $L_1(t)$, $L_2(t)$ and $L_3(t)$ are presented in Figures \ref{fig4}, \ref{fig5}, and \ref{fig6} respectively. We now describe the observations for estimating $\sigma_2^2$ under the quadratic loss function $L_1(t)$, as shown in Figure \ref{fig4}.
		\begin{enumerate}
			\item[(i)] The RRIs of the estimators $\delta^1_{21}$ and $\delta_{D}^1$ increase with increasing $\eta$. Furthermore, $\delta_{D}^1$ has the largest improvement region for all values of $\eta$, compared to the estimator $\delta_{21}^1$.
			\item [(ii)] As we observed for $\sigma_1^2$, the RRI of the estimator $\delta_{\xi_{01}}^1$ is not monotone. It increases as $\eta$ moves away from 0, attains its maximum at moderate values of $\eta$ (approximately $0.6 \leq \eta \leq 0.7$), and then decreases as $\eta$ approaches 1.
			\item [(iii)] The estimators $\delta_{21}^1$ and $\delta_{D}^1$ show relatively smaller risk improvement for moderate values of $\eta$, but their performance increases significantly as $\eta$ approaches 1. It is observed that when $\eta$ is close to 1, these estimators outperform the Brewster and Zidek–type estimator $\delta_{\xi_{01}}^1$. Moreover, among them, $\delta_{D}^1$ achieves the highest RRI values near $\eta = 1$.
			\item [(iv)] Furthermore, it is observed that the estimator $\delta_{\xi_{01}}^1$ performs better than $\delta_{21}^1$ and $\delta_{D}^1$ in the range $0.1 \leq \eta \leq 0.7$ (approximately), but performs decline when $\eta \geq 0.7$ (approximately). In contrast, $\delta_{D}^1$ dominates both $\delta_{21}^1$ and $\delta^1_{\xi_{01}}$ for $\eta \geq 0.7$ (approximately).
		\end{enumerate}
	
	\begin{figure}[htbp]
		\centering
		\subfigure[$(p_1,p_2)=(4,5),(\mu_1,\mu_2)=(0,0.1)$]{
			\includegraphics[width=0.30\textwidth]{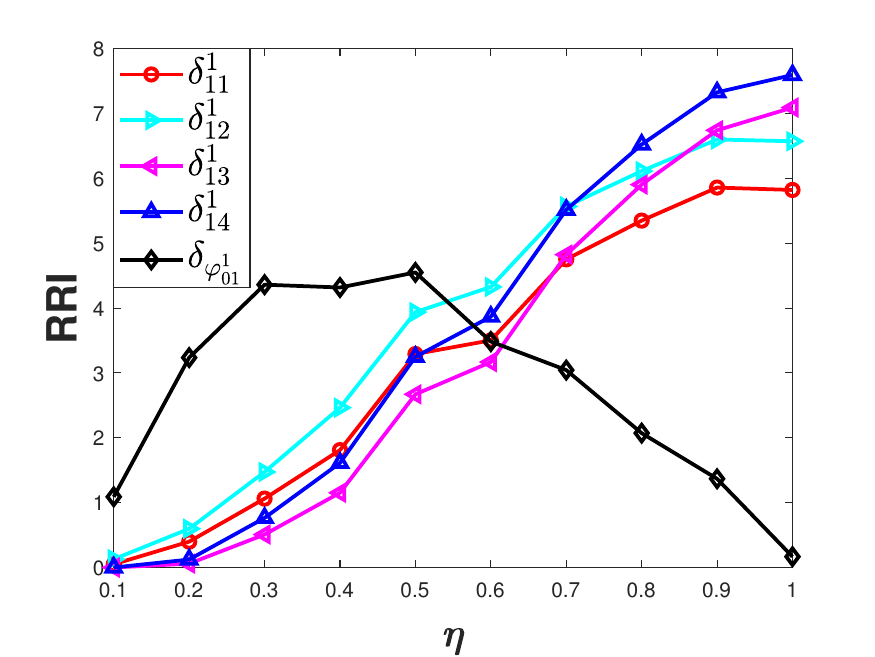}
		}
		\hfill
		\subfigure[$(p_1,p_2)=(8,6), (\mu_1,\mu_2)=(0.1,0.3)$]{
			\includegraphics[width=0.30\textwidth]{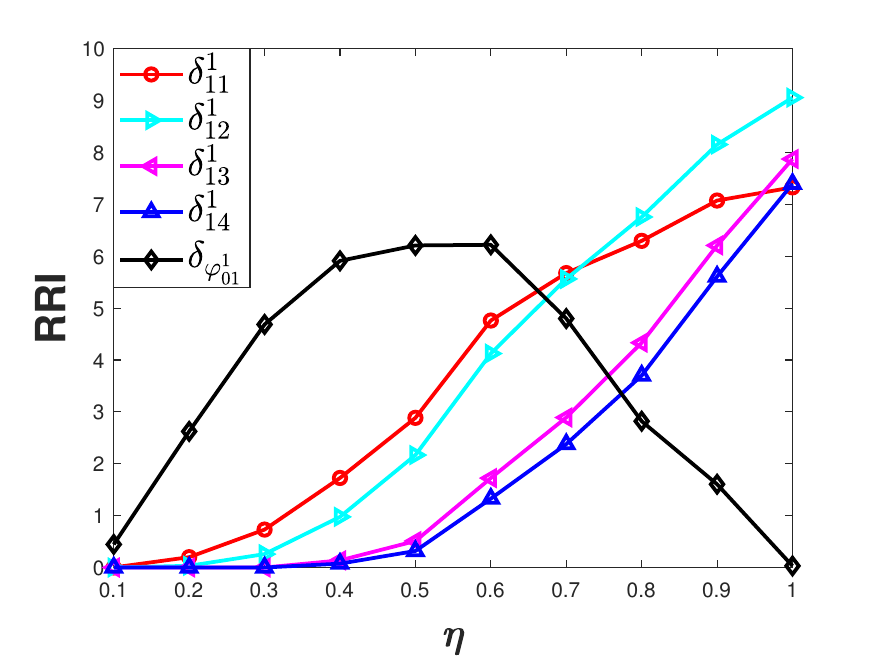}
		}
		\hfill
		\subfigure[$(p_1,p_2)=(8,12), (\mu_1,\mu_2)=(0.3,0.5)$]{
			\includegraphics[width=0.30\textwidth]{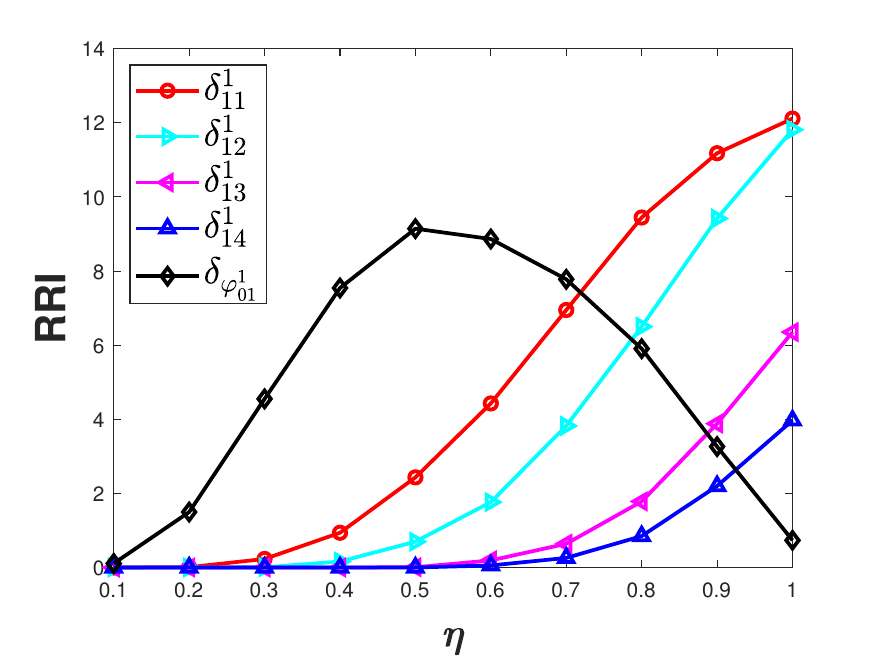}
		}
		
		\caption{RRI of estimators relative to BAEE $\sigma_1^2$ under $L_1(t)$}
		\label{fig1}
	\end{figure}
		\begin{figure}[htbp]
		\centering
		\subfigure[$(p_1,p_2)=(4,5),(\mu_1,\mu_2)=(0,0.1)$]{
			\includegraphics[width=0.30\textwidth]{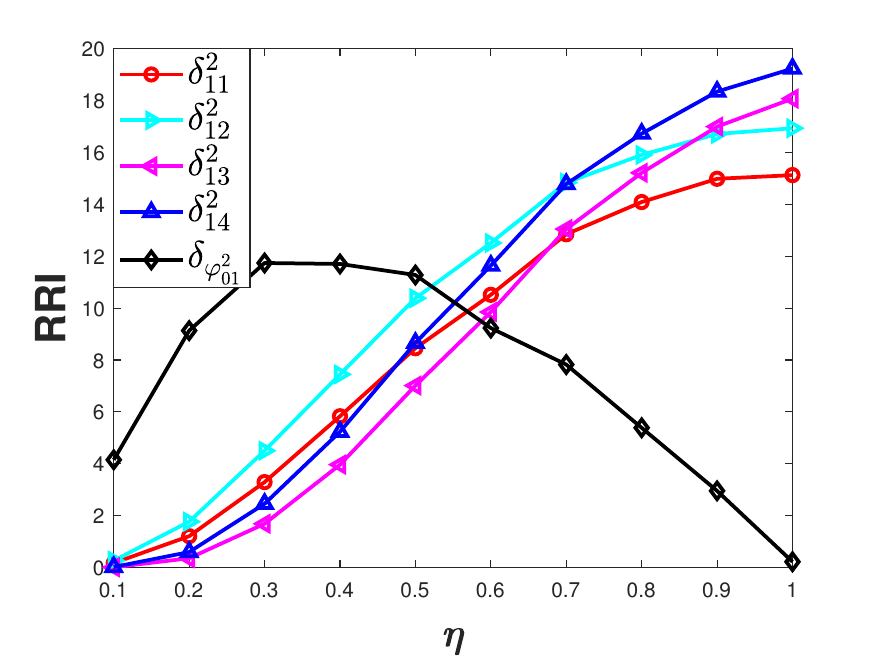}
		}
		\hfill
		\subfigure[$(p_1,p_2)=(8,6), (\mu_1,\mu_2)=(0.1,0.3)$]{
			\includegraphics[width=0.30\textwidth]{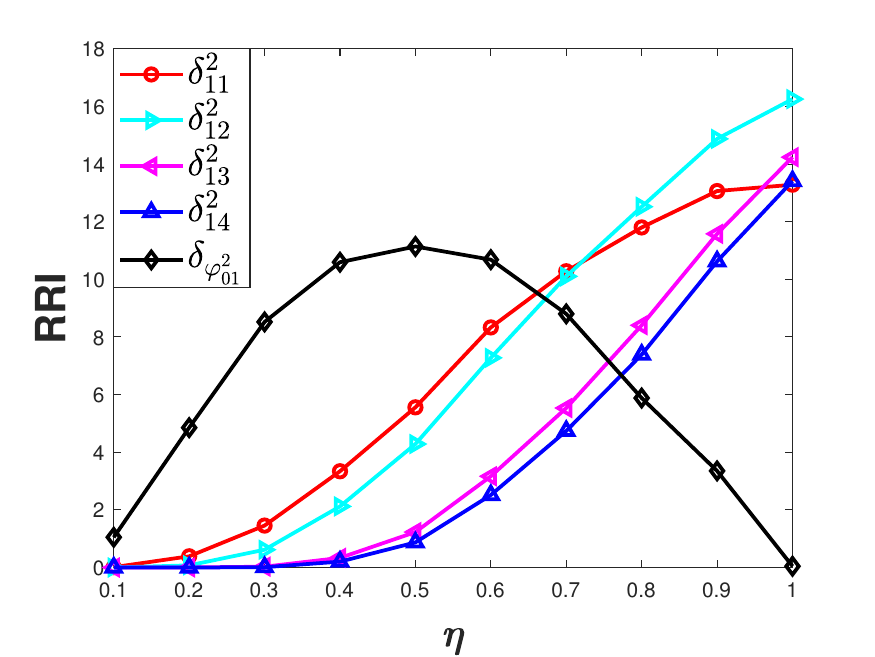}
		}
		\hfill
		\subfigure[$(p_1,p_2)=(8,12), (\mu_1,\mu_2)=(0.3,0.5)$]{
			\includegraphics[width=0.30\textwidth]{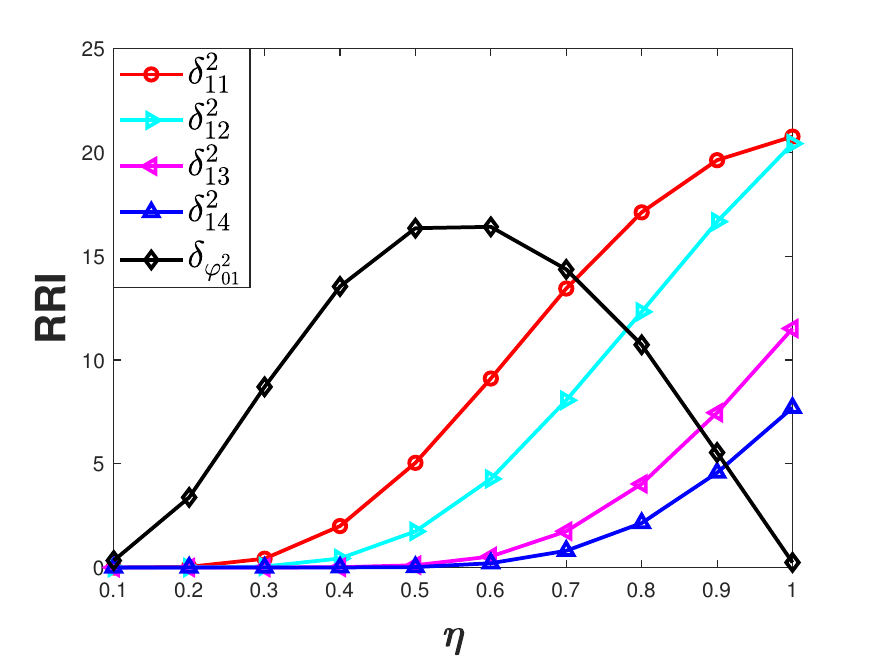}
		}
		
		\caption{RRI of estimators relative to BAEE $\sigma_1^2$ under $L_2(t)$}
		\label{fig2}
	\end{figure}
	\begin{figure}[htbp]
		\centering
		\subfigure[$(p_1,p_2)=(4,5),(\mu_1,\mu_2)=(0,0.1)$]{
			\includegraphics[width=0.30\textwidth]{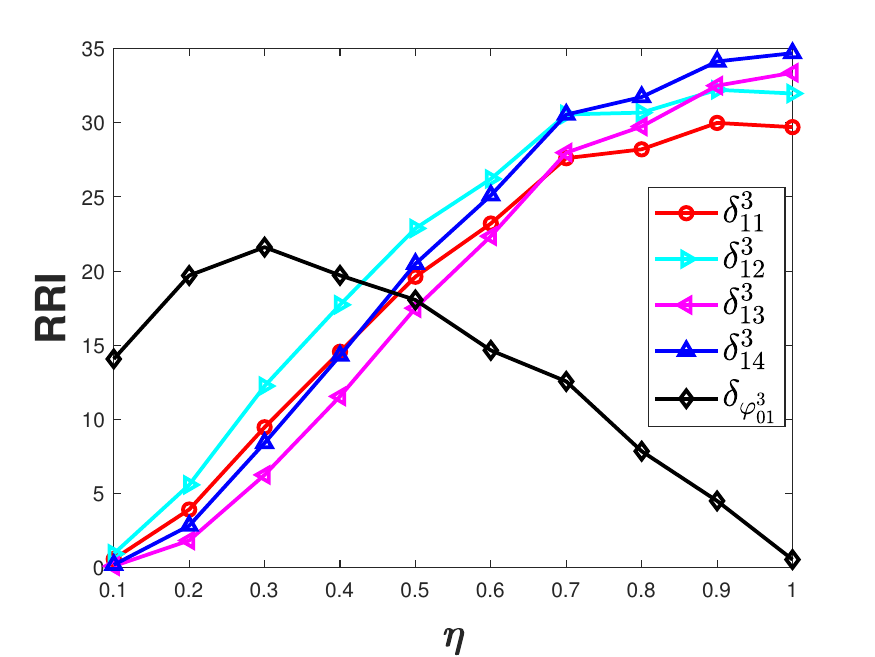}
		}
		\hfill
		\subfigure[$(p_1,p_2)=(8,6), (\mu_1,\mu_2)=(0.1,0.3)$]{
			\includegraphics[width=0.30\textwidth]{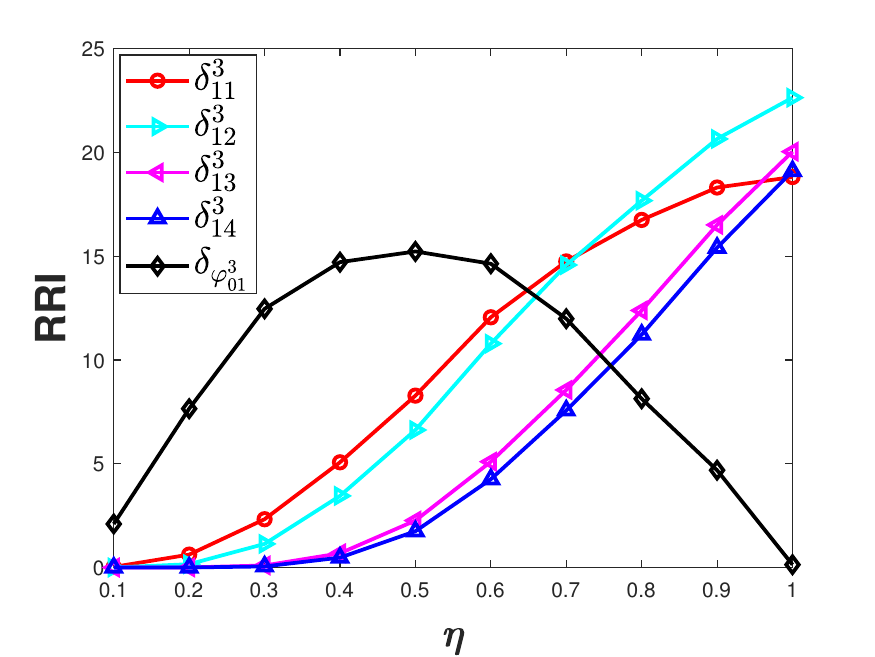}
		}
		\hfill
		\subfigure[$(p_1,p_2)=(8,12), (\mu_1,\mu_2)=(0.3,0.5)$]{
			\includegraphics[width=0.30\textwidth]{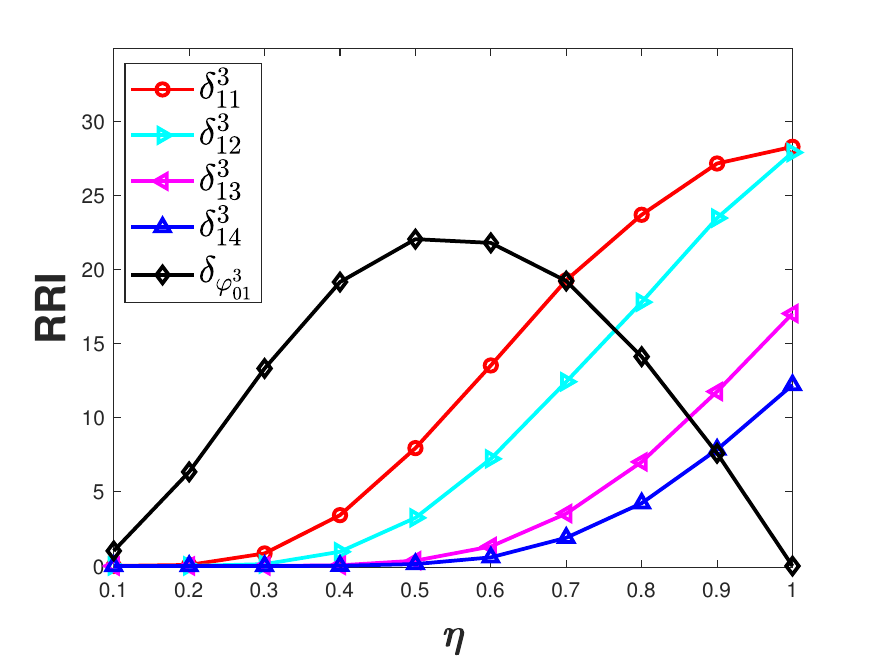}
		}
		
		\caption{RRI of estimators relative to BAEE $\sigma_1^2$ under $L_3(t)$}
		\label{fig3}
	\end{figure}
	\begin{figure}[htbp]
	\centering
	\subfigure[$(p_1,p_2)=(4,5),(\mu_1,\mu_2)=(0,0.1)$]{
		\includegraphics[width=0.30\textwidth]{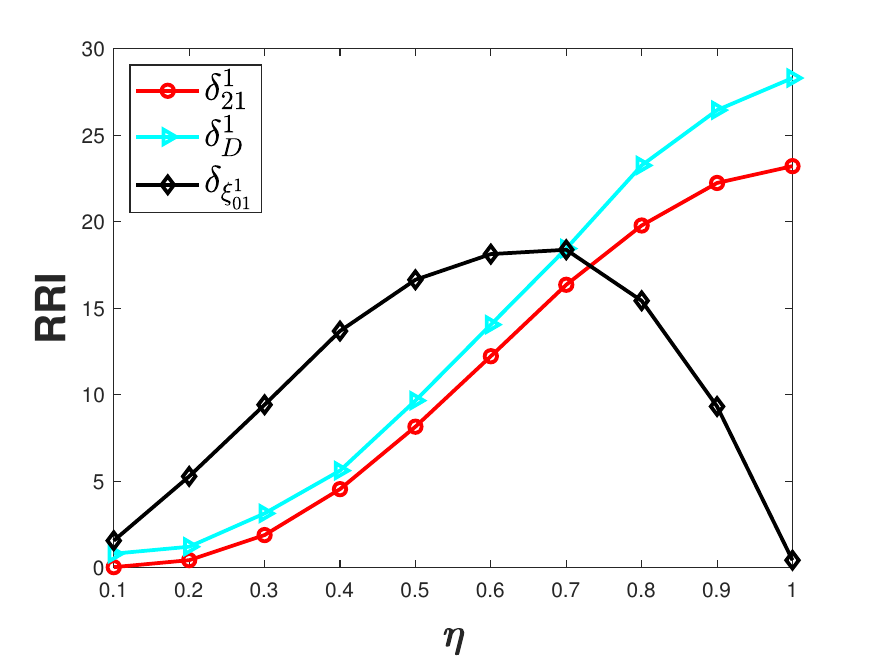}
	}
	\hfill
	\subfigure[$(p_1,p_2)=(8,6), (\mu_1,\mu_2)=(0.1,0.3)$]{
		\includegraphics[width=0.30\textwidth]{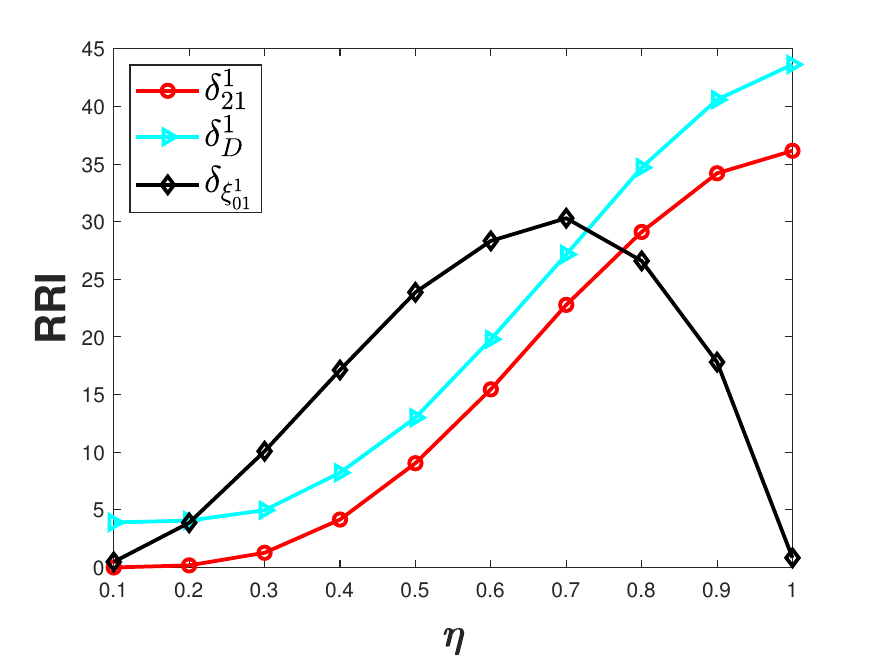}
	}
	\hfill
	\subfigure[$(p_1,p_2)=(8,12), (\mu_1,\mu_2)=(0.3,0.5)$]{
		\includegraphics[width=0.30\textwidth]{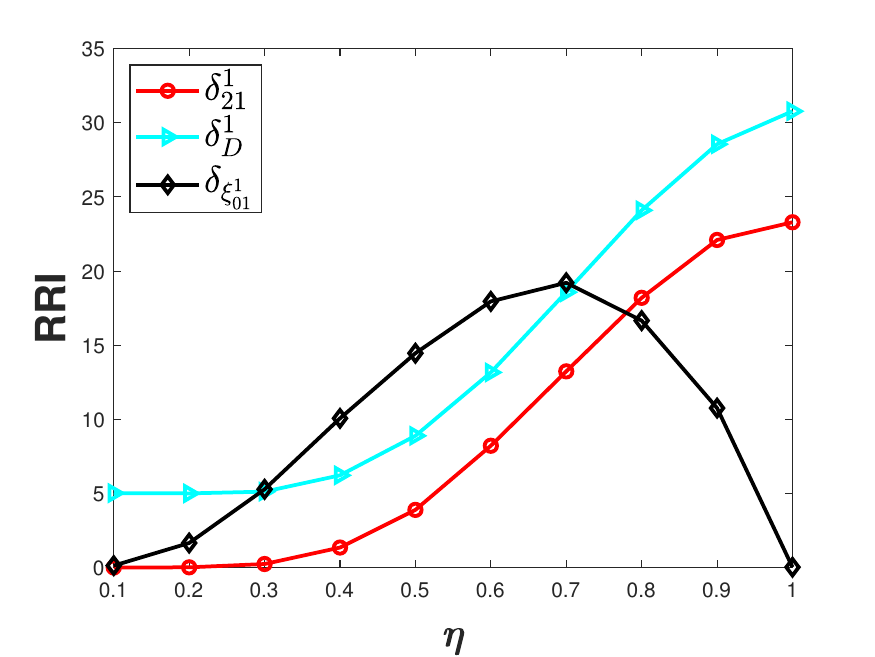}
	}
	
	\caption{RRI of estimators relative to BAEE $\sigma_2^2$ under $L_1(t)$}
	\label{fig4}
\end{figure}
		\begin{figure}[htbp]
		\centering
		\subfigure[$(p_1,p_2)=(4,5),(\mu_1,\mu_2)=(0,0.1)$]{
			\includegraphics[width=0.30\textwidth]{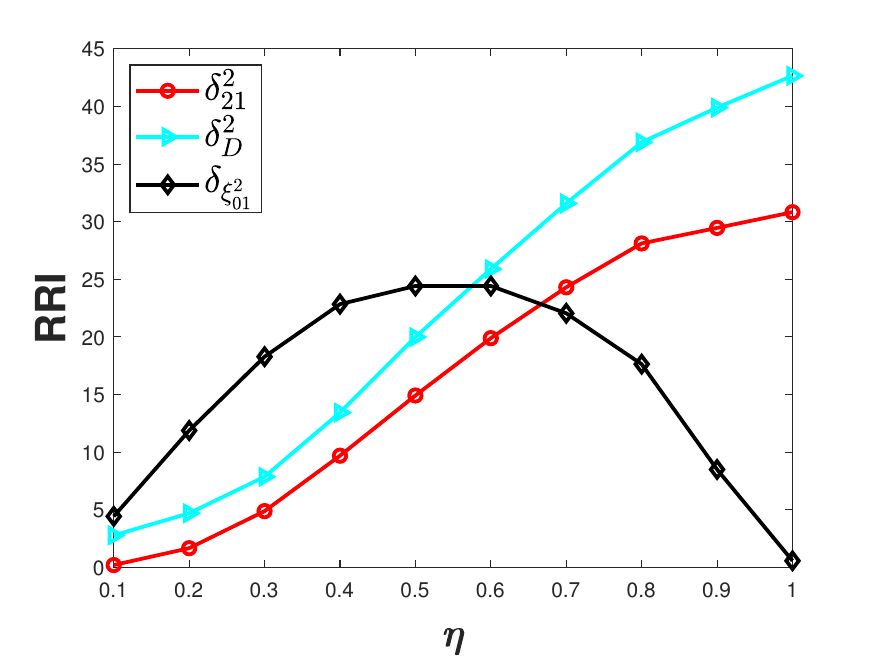}
		}
		\hfill
		\subfigure[$(p_1,p_2)=(8,6), (\mu_1,\mu_2)=(0.1,0.3)$]{
			\includegraphics[width=0.30\textwidth]{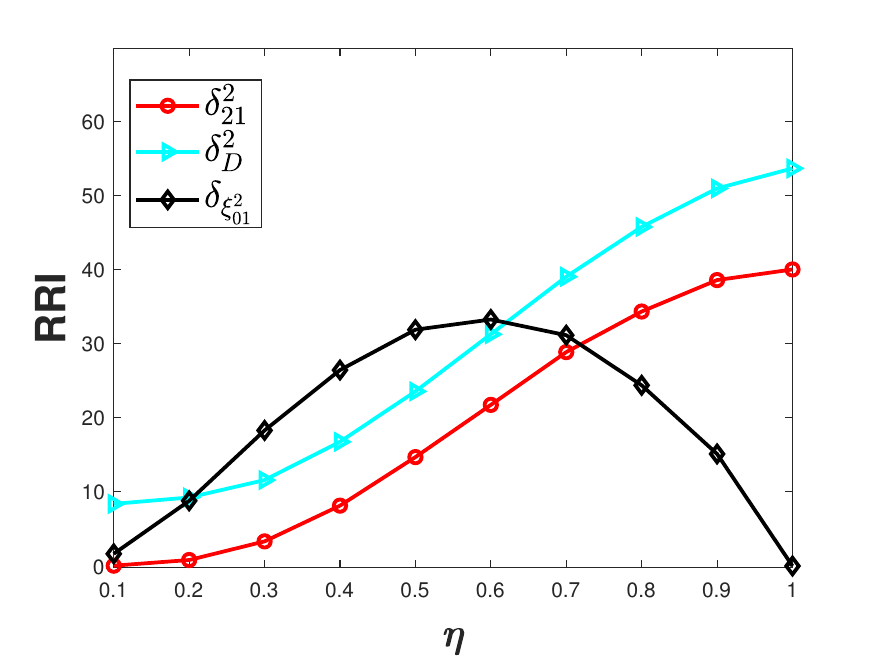}
		}
		\hfill
		\subfigure[$(p_1,p_2)=(8,12), (\mu_1,\mu_2)=(0.3,0.5)$]{
			\includegraphics[width=0.30\textwidth]{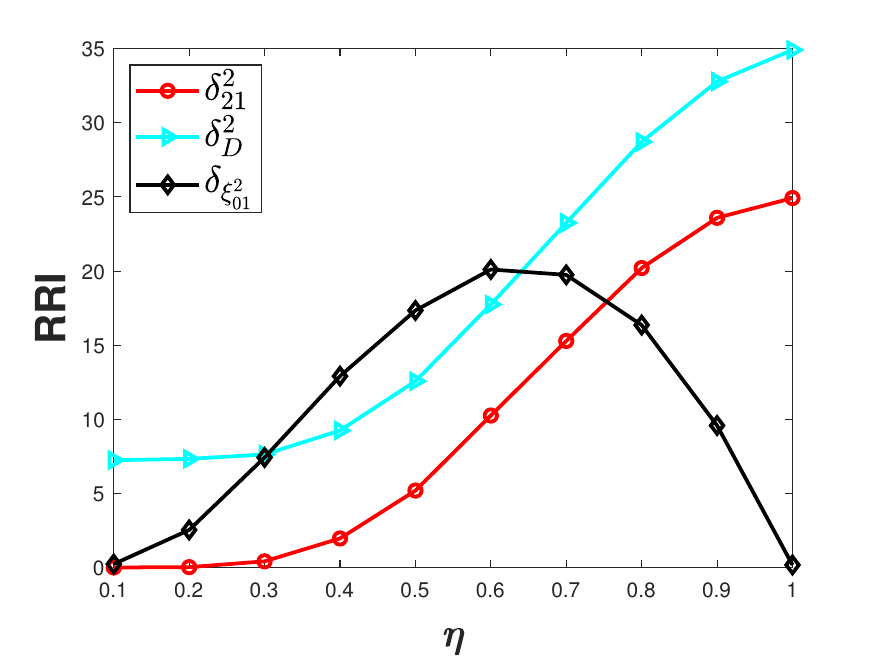}
		}
		
		\caption{RRI of estimators relative to BAEE $\sigma_2^2$ under $L_2(t)$}
		\label{fig5}
	\end{figure}
			\begin{figure}[htbp]
			\centering
			\subfigure[$(p_1,p_2)=(4,5),(\mu_1,\mu_2)=(0,0.1)$]{
				\includegraphics[width=0.30\textwidth]{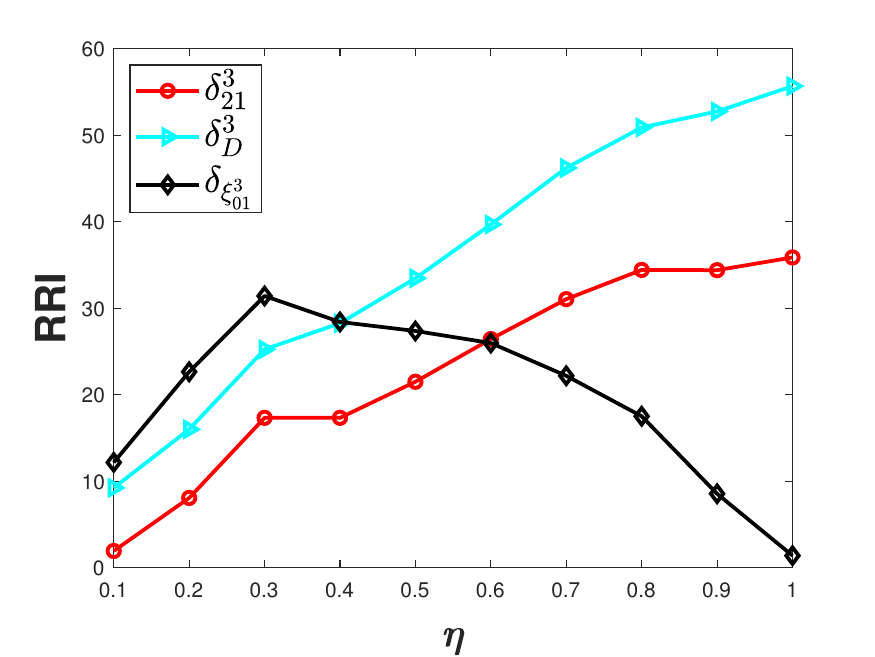}
			}
			\hfill
			\subfigure[$(p_1,p_2)=(8,6), (\mu_1,\mu_2)=(0.1,0.3)$]{
				\includegraphics[width=0.30\textwidth]{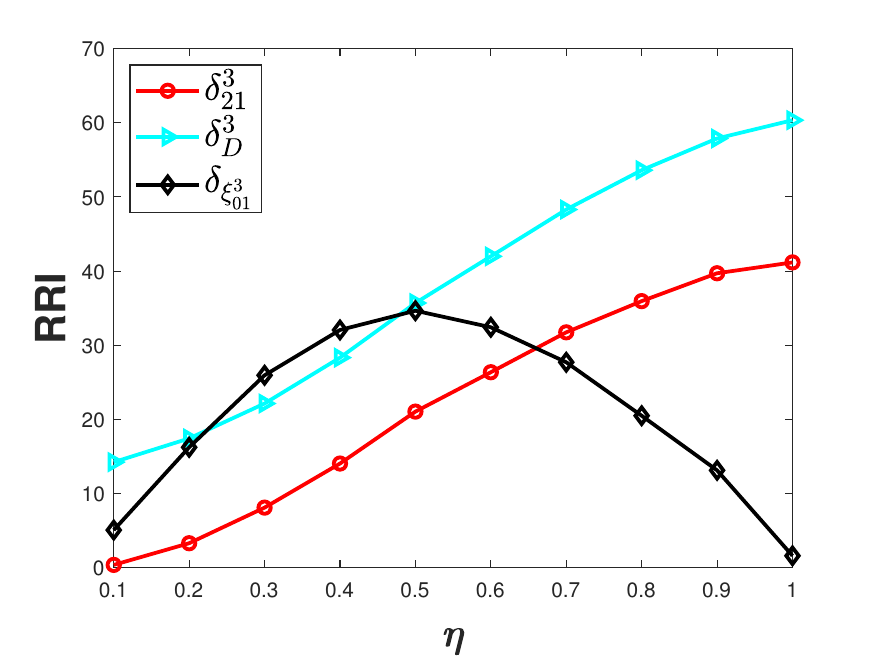}
			}
			\hfill
			\subfigure[$(p_1,p_2)=(8,12), (\mu_1,\mu_2)=(0.3,0.5)$]{
				\includegraphics[width=0.30\textwidth]{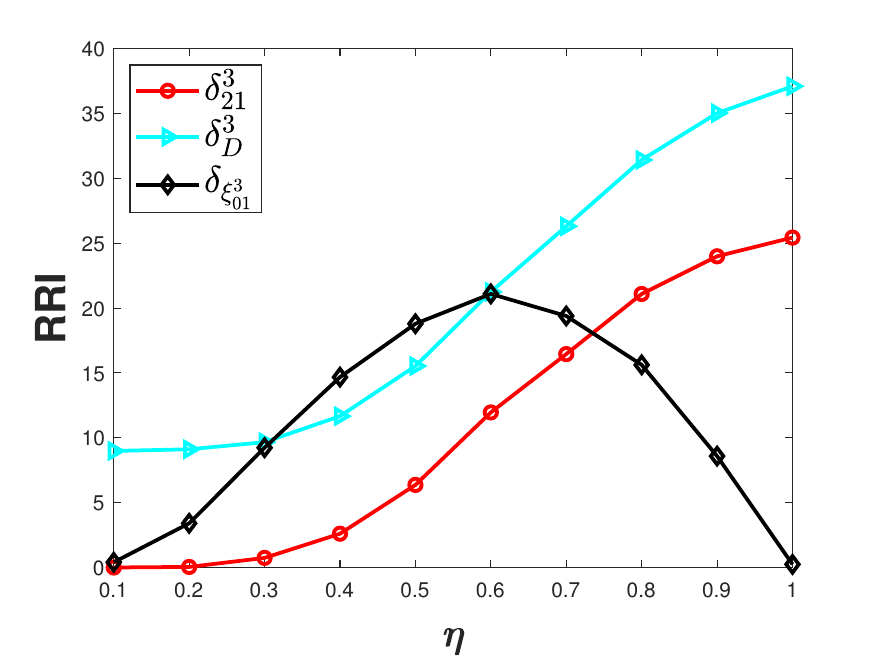}
			}
			
			\caption{RRI of estimators relative to BAEE $\sigma_2^2$ under $L_3(t)$}
			\label{fig6}
		\end{figure}
		\section{Data analysis}\label{se6}
		In this section we give an example of a real data set (see \cite{proschan1963theoretical}) that records the failure times (in hours) of the air conditioning systems for the Boeing $720$ aircraft "$7916$" and "$7907$". \textbf{Plane 7916} = $50,254,5,283,35,12$. \textbf{Plane 7907} = $194,15,41,29,33,181$. We check the given data sets using KS goodness of fit test. At the significance level $\alpha=0.05$, there is insufficient evidence to reject the hypothesis that the data for the plane 7916 and 7907 are from $Exp(5,0.0099)$ and $Exp(15,0.0149)$ distributions respectively. Based on the given data, we have computed the values of the statistics $X_1=5$, $X_2=15$, $S_1=609$, and $S_2=403$. These statistics are then used to obtain the estimated values of the improved estimators for $\sigma_1^2$ and $\sigma_2^2$ under the loss function $L_1(t)$, $L_2(t)$ and $L_3(t)$ respectively (see Tables \ref{table1} and \ref{table2}). For the linex loss function $L_4(t)$, the estimators do not have a closed form expression when $k=2$, therefore it is not included in the table of computed estimators. 		
		\begin{figure}[htbp]
			\centering
			\subfigure[Plane $7916$]{
				\includegraphics[width=0.8\textwidth]{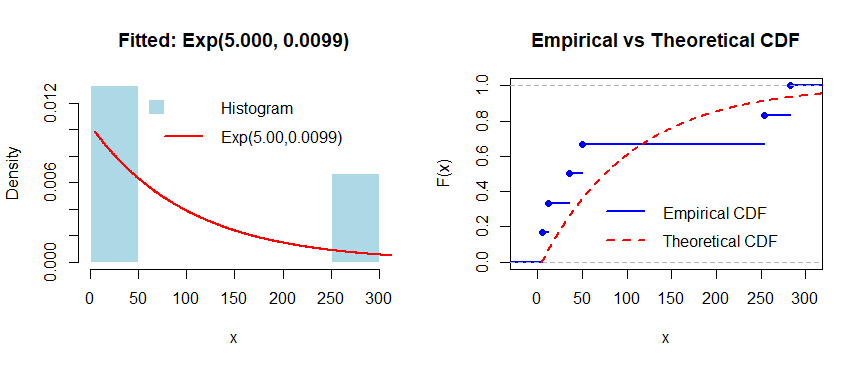}
			}
			\hfill
			\subfigure[Plane $7907$]{
				\includegraphics[width=0.8\textwidth]{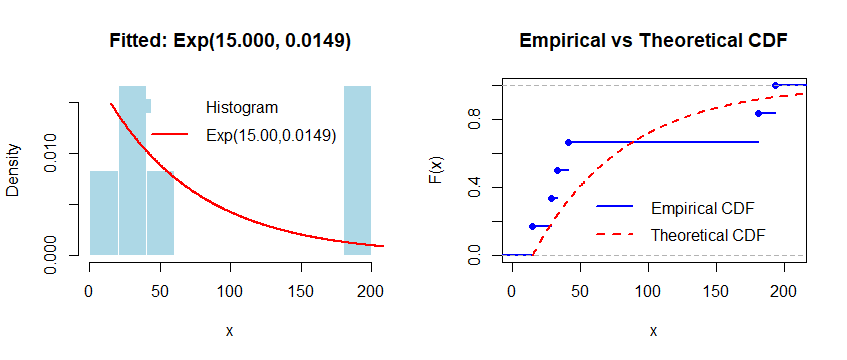}
			}
			\caption{Fitted exponential model with histogram and CDF comparison}\label{fig7}
		\end{figure}
		\begin{table}[h!]
			\centering
			\caption{Estimated values of the improved estimators of $\sigma_1^2$}\label{table1}
			\begin{tabular}{@{}ccccccc@{}}
				\toprule
				& $\delta_{01}$ & $\delta_{11}$ & $\delta_{12}$ &  $\delta_{13}$ & $\delta_{14}$ & $\delta_{\phi_{*}}$ \\
				\hhline{=======}
				$L_1(t)$  & $6.6229 \times$ $10^3$ & $6.5650 \times$ $10^3$ & $5.9657 \times$ $10^3$ & $6.6229 \times$ $10^3$  & $6.1020 \times$ $10^3$ & $4.7691 \times$ $10^3$\\
				
				$L_2(t)$  & $12.363 \times$ $10^3$ & $9.310 \times$ $10^3$ & $8.225 \times$ $10^3$ & $9.200 \times$ $10^3$  & $8.214 \times$ $10^3$ & $7.029 \times$ $10^3$\\
				
				$L_3(t)$   & $19.547 \times$ $10^3$ & $11.508 \times$ $10^3$ & $9.962 \times$ $10^3$ & $11.142 \times$ $10^3$  & $9.7782 \times$ $10^3$ & $8.840 \times$ $10^3$\\
				\bottomrule
			\end{tabular}
		\end{table}
		\begin{table}[h!]
			\centering
			\caption{Estimated values of the improved estimators of $\sigma_2^2$}\label{table2}
			\begin{tabular}{@{}ccccc@{}}
				\toprule
				& $\delta_{01}$ & $\delta_{21}$ & $\delta_{D}$ & $\delta_{\phi_{*}}$ \\
				\hhline{=====}
				$L_1(t)$  & $2.9002 \times$ $10^3$ & $6.5650\times$ $10^3$ & $6.0916\times$ $10^3$ & $9.0561\times$ $10^3$\\
				
				$L_2(t)$  & $5.414 \times$ $10^3$ & $9.310\times$ $10^3$ & $8.057\times$ $10^3$ & $14.642\times$ $10^3$\\
				
				$L_3(t)$  & $8.560\times$ $10^3$ & $11.508\times$ $10^3$ & $8.977\times$ $10^3$ & $20.842\times$ $10^3$\\
				\bottomrule
			\end{tabular}
		\end{table}
		\section{Concluding remarks} 
		In this manuscript, we study the estimation of positive powers of the ordered scale parameter for two exponential distributions under a general class of bowl-shaped, scale-invariant loss functions when $\sigma_1\leq \sigma_2$. For estimating $\sigma_i^k$, we derive several Stein-type improved estimators that perform better than the BAEE. Also we obtain a generalized Bayes estimator that turns out to be the same as the Brewster-Zidek-type improved estimator. We give the expression of the improved estimator for particular loss functions, such as quadratic, entropy, and symmetric loss functions. Moreover, improved estimators are developed under the generalized Pitman closeness criterion. A simulation study is carried out to compare the risk performance of the proposed estimator. In particular, for $k=2$, we examine the behaviour of different improved estimators of $\sigma_1^2$ and $\sigma_2^2$ under the above loss functions. The results indicate that the Brewster-Zidek type estimators perform better when $\eta\leq 0.74$ and when the parameters $(\mu_1,\mu_2)$ are close to the origin, whereas for $\eta\geq0.74$, the Stein-type estimators perform better. Finally, we use failure times of the air-conditioning systems of jet aircraft for data analysis, and compute improved estimates under the above loss functions. 
		
		\section*{Acknowledgment}
		The author is sincerely grateful to Dr Lakshmi Kanta Patra for many helpful discussions and for his guidance and support during the preparation of this research article.
		\section*{Data availability statement}
		Data used in this study are publicly available in the literature and cited in this manuscript.

	\end{document}